\newtheorem{theorem}{Theorem}[section]
\newtheorem{lemma}[theorem]{Lemma}
\newtheorem{proposition}[theorem]{Proposition}
\newtheorem{corollary}[theorem]{Corollary}
\theoremstyle{definition}
\newtheorem{remark}[theorem]{Remark}
\numberwithin{equation}{section}
\newskip\aline \newskip\halfaline
\def\skipaline{\vskip\aline}
\def\qedbox{$\rlap{$\sqcap$}\sqcup$}
\def\qed{\nobreak\hfill\penalty250 \hbox{}\nobreak\hfill\qedbox\skipaline}
\newcommand{\one}{{\mathbbm{1}}}
\newcommand{\bA}{\mathbb A}
\newcommand\bC{{\mathbb C}}
\newcommand\bE{{\mathbb E}}
\newcommand{\bN}{{{\mathbb N}}}
\newcommand{\bP}{{{\mathbb P}}}
\newcommand\bR{{\mathbb R}}
\newcommand{\bT}{{\mathbb T}}
\newcommand\bZ{{\mathbb Z}}
\DeclareMathOperator{\supp}{{\rm supp}}
\DeclareMathOperator{\var}{\boldsymbol{var}}
\DeclareMathOperator{\vol}{vol}
\newcommand{\ba}{\boldsymbol{a}}
\newcommand{\be}{{\boldsymbol{e}}}
\newcommand{\ii}{\boldsymbol{i}}
\newcommand{\kk}{{\boldsymbol{k}}}
\newcommand{\bp}{{\boldsymbol{p}}}
\newcommand{\bs}{\boldsymbol{s}}
\newcommand{\bt}{\boldsymbol{t}}
\newcommand{\bu}{{\boldsymbol{u}}}
\newcommand{\bv}{{\boldsymbol{v}}}
\newcommand{\bx}{{\boldsymbol{x}}}
\newcommand{\by}{{\boldsymbol{y}}}
\newcommand{\bz}{{\boldsymbol{z}}}
\newcommand{\bsE}{\boldsymbol{E}}
\newcommand{\bsI}{\boldsymbol{I}}
\newcommand{\bsJ}{\boldsymbol{J}}
\newcommand{\bsN}{\boldsymbol{N}}
\newcommand{\bsP}{\boldsymbol{P}}
\newcommand{\bsV}{{\boldsymbol{V}}}
\newcommand{\bsW}{\boldsymbol{W}}
\newcommand{\bsY}{{\boldsymbol{Y}}}
\newcommand{\bsZ}{\boldsymbol{Z}}
\newcommand{\bone}{\boldsymbol{1}}
\newcommand{\bgamma}{{\boldsymbol{\gamma}}}
\newcommand{\bGamma}{\boldsymbol{\Gamma}}
\newcommand{\blam}{{\boldsymbol{\lambda}}}
\newcommand{\bnu}{{\boldsymbol{\nu}}}
\newcommand{\btheta}{\boldsymbol{\theta}}
\newcommand{\bom}{{\boldsymbol{\omega}}}
\newcommand{\boxi}{\boldsymbol{\xi}}
\newcommand{\bXi}{\boldsymbol{\Xi}}
\newcommand{\bOm}{\boldsymbol{\Omega}}
\newcommand{\si}{{\sigma}}
\newcommand{\ve}{{\varepsilon}}
\newcommand{\eA}{\EuScript{A}}
\newcommand{\eB}{\EuScript{B}}
\newcommand{\eF}{\EuScript{F}}
\newcommand{\eI}{{\EuScript{I}}}
\newcommand{\eK}{\EuScript{K}}
\newcommand{\eL}{\EuScript{L}}
\newcommand{\eN}{\EuScript{N}}
\newcommand{\eO}{\EuScript{O}}
\newcommand{\eP}{\EuScript{P}}
\newcommand{\eS}{\EuScript{S}}
\newcommand{\eX}{\mathscr{X}}
\newcommand{\lan}{\langle}
\newcommand{\ran}{\rangle}
\def\inpr{\mathbin{\hbox to 6pt{\vrule height0.4pt width5pt depth0pt \kern-.4pt \vrule height6pt width0.4pt depth0pt\hss}}}
\newcommand{\pa}{\partial}
\newcommand{\whB}{\widehat{B}}
\newcommand{\tbY}{\widehat{\boldsymbol{Y}}}
\newcommand{\mfH}{{\mathfrak{H}}}
\begin{document}

\title[Multidimensional  random Fourier series]{Critical points  of  multidimensional  random Fourier series: central limits} 

\date{Started  October 30, 2015. Completed  on  November 8, 2015. Last modified on {\today}. }

\author{Liviu I. Nicolaescu}
%\thanks{These notes are for myself and whoever else reads this footnote.}

\address{Department of Mathematics, University of Notre Dame, Notre Dame, IN 46556-4618.}
\email{nicolaescu.1@nd.edu}
\urladdr{\url{http://www.nd.edu/~lnicolae/}}

\subjclass{60B20, 60D05, 60F05,  60G15}
\keywords{Gaussian random functions, critical points, Gaussian Hilbert spaces, Wiener chaos,  central limit theorem}

\begin{abstract}   We investigate certain families  $X^\hbar$, $0<\hbar \ll 1$ of stationary  Gaussian random  smooth  functions  on the $m$-dimensional torus $\bT^m:=\bR^m/\bZ ^m$ approaching the white noise as $\hbar \to 0$. We show  that  there exists universal  constants $c_1,c_2>0$ such that for any cube $B\subset \bR^m$ of size $r\leq 1/2$ and centered at the origin, the number of critical points   of $X^\hbar$ in the region $B\bmod \bZ^m\subset\bT^m$    has  mean $\sim c_1\vol(B)\hbar^{-m}$, variance $\sim c_2\vol(B)\hbar^{-m}$, and satisfies a central limit theorem as $\hbar\searrow 0$.
\end{abstract}
\maketitle

\tableofcontents

\section{The main results}
\setcounter{equation}{0}

\subsection{By way of motivation} During the final years  of his life, V.I. Arnold \cite{Ar06, Ar07a, Ar07b, Ar15} investigated the Morse theoretic properties of  trigonometric polynomials on  higher dimensional tori.      We describe below one direction of his investigation.

Fix a convex polygon $\eP$ in $\bR^m$, with vertices in the lattice $\bZ^m$.  We assume that $\eP$ symmetric with respect to the origin.     For $\btheta=(\theta_1,\dotsc, \theta_m) \in\bR^m$ and $\kk\in\bZ^m$ we set
\[
 \lan\kk,\btheta\ran :=\sum_jk_j\theta_j. 
 \]
   Consider the   vector space $\bsV_\eP$ of trigonometric polynomials on $\bT^m=\bR^m/\bZ^m$  whose Newton's polyhedra are contained in  $\eP$, i.e.,  polynomials of  the form
\begin{equation}\label{r_trig}
f(\btheta)=\sum_{\kk\in \eP\cap\bZ^m}\bigl(\, A_{\kk}\cos(2\pi\lan\kk,\btheta\ran)+B_{\kk} \sin(2\pi \lan\kk,\btheta\ran)\,\bigr),\;\;\btheta\in\bT^m.
\end{equation}
We can  think of $f$  as a random polynomial   by declaring the coefficients $A_\kk$ and $B_\kk$ to be independent  standard normal random variables.  We denote by $\bsN^\eP(f)$ the number  of  critical points of  $f(\theta)$.  We denote $\bsN_{\max}^\eP$ the (essential)  supremum of the positive random variable $\bsN^\eP(f)$. The results of Bernshtein \cite{Be} and Kouchnirenko \cite{Kou}  imply the (very rough)   upper bound
 \begin{equation}
  \bsN^\eP_{\max} \leq  K_m(\eP):=m! {\rm vol}\, (\eP),
 \label{eq: upper}
 \end{equation}
 Arnold proved  that in dimension $m=2$  and  for several classes of  polyhedra $\eP$  the upper  bound  $K_m(\eP)$ is  very  close to being optimal.   He  achieved this  by producing  nontrivial lower bounds  for  $\bsN^\eP_{\max}$ using  techniques from real algebraic geometry.
 
There exists an obvious  probabilistic lower bound  for $\bsN^\eP_{\max}$, namely
\begin{equation}\label{pr_bo}
\bsN^\eP_{\max}\geq  Z^\eP:=\bE\bigl[\,\bsN^\eP(f)\,\bigr].
\end{equation}
In \cite{N2010} we have computed $Z^\eP$ for  symmetric polyhedra $\eP$, i.e., polyhedra that are invariant under the obvious action of the symmetric group $S_m$ on $\bR^m$ and  we have observed that for the  polyhedra $\eP$ considered by Arnold the lower bound  $Z^\eP$  is so close to  the upper bound $K_m(\eP)$ that we can conclude   via simple topological arguments that $\bsN^\eP_{\max} = K_m(\eP)$.

The computations in  \cite{N2010} also show  that  for any symmetric polyhedron $\eP$  there exists  an explicit constant $C=C_m(\eP)>0$ such that
\[
Z^{\nu \eP}\sim  C_m(\eP)\nu^m\;\;\mbox{as $\nu\to\infty$ inside $\bN$}.
\]
Clearly $K_m(\nu\eP)=K_m(\eP)\nu^m$   so the  mean $Z^{\nu\eP}$ and the   Bernshtein-Kouchnirenko  upper bound $K_m(\nu\eP)$ have the same rate of growth  as $\nu\to\infty$. These observations suggest that the  random variables $\bsN^{\nu \eP}(f)$  might concentrate near  their means as $\nu\to\infty$.    

We can  reformulate the above facts as follows.  Denote by $w$   the indicator function of $\eP$. We can  describe the random trigonometric polynomial (\ref{r_trig}) in the form
\[
f(\btheta)=\sum_{\kk\in \bZ^m}w(\kk)\bigl(\, A_{\kk}\cos(2\pi\lan\kk,\btheta\ran)+B_{\kk} \sin(2\pi \lan\kk,\btheta\ran)\,\bigr).
\]
The rescaling $\eP\mapsto \nu\eP$ corresponds to a new random polynomial
\begin{equation}\label{r_trigh}
f^\hbar(\btheta)=\sum_{\kk\in \bZ^m}w(\hbar \kk)\bigl(\, A_{\kk}\cos(2\pi\lan\kk,\btheta\ran)+B_{\kk} \sin(2\pi \lan\kk,\btheta\ran)\,\bigr),\;\;\hbar=\nu^{-1}.
\end{equation}
In this paper we investigate  random Fourier series  of the form  (\ref{r_trigh}),  where $w$ is a nonnegative, \emph{radially symmetric, Schwartz function}.   (If $w$ has compact support then $f^\hbar$ is a trigonometric polynomial).  The main results  in this paper will show   that as $\hbar \to 0$ the  critical  points  of $f^\hbar(\btheta)$    will  distribute uniformly  on $\bR^m/\bZ^m$ with high probability.   In particular, the  total number of critical points of $f^\hbar(\btheta)$ inside a cube  $B\subset [0,1]^m$ satisfies a central limit theorem as $\hbar\to 0$.  The next subsections   contain the precise  statements.

\subsection{The problem.}   We begin by recalling the setup in \cite{Nifluct}. For any $\hbar>0$ we denote by $\bT^m_\hbar$  the $m$-dimensional torus $\bR^m/\bZ^m$  with  angular coordinates $\theta_1,\dotsc, \theta_m\in\bR/\bZ$  equipped with  the flat metric
\[
g_\hbar:=\sum_{j=1}^m \hbar^{-2}(d\theta_j)^2.
\]
For a measurable subset $S\subset \bT^m$ we denote by $\vol_\hbar(S)$ its volume with respect to the  metric $g_\hbar$, and we set $\vol :=\vol_\hbar|_{\hbar=1}$. Hence
\[
\vol_\hbar(\bT^m)=\hbar^{-m}\vol(\bT^m)=\hbar^{-m}.
\]
The eigenvalues of the corresponding  Laplacian $\Delta_\hbar=-\hbar^{2}\sum_{k=1}^m \pa^2_{\theta_k}$ are
\[
\lambda^\hbar(\kk)=\hbar^2\lambda(\kk),\;\;\lambda(\kk):=\bigl|\,2\pi\kk\,\bigr|^2,\;\;\kk=(k_1,\dotsc, k_m)\in\bZ^m.
\]
Denote by $\prec$ the lexicographic order on $\bR^m$.  An  \emph{orthonormal} basis of $L^2(\bT^m_\hbar)$ is given by the  functions $(\psi^\hbar_{\kk})_{\kk\in\bZ^m}$, where
\[
\psi^\hbar_{\kk}(\btheta) =\hbar^{\frac{m}{2}}\psi_\kk(\btheta),\;\;\psi_\kk(\btheta):=\begin{cases}1, &\kk=0\\
\sqrt{2} \sin 2\pi\lan \kk,\btheta\ran, &\kk\succ\vec{0}, \\
\sqrt{2}\cos 2\pi\lan\kk,\btheta\ran, &\kk\prec\vec{0}. 
\end{cases}
\]
Fix a  nonnegative, even  Schwartz function $w\in \eS(\bR)$, set $w_\hbar(t)=w(\hbar t)$ so that
\[
w\bigl(\, \sqrt{\lambda^\hbar(\kk)}\;\bigr)=w_\hbar\bigl(\,\sqrt{\lambda(\kk)}\;\bigr).
\]
 Consider the random  function given by the random Fourier series
\begin{equation}
\begin{split}
X^\hbar(\btheta)=\sum_{\kk\in\bZ^m} w_\hbar\bigl(\;\sqrt{\lambda^\hbar(\kk)}\;\bigr)^{\frac{1}{2}}\,N_{\kk}\psi_{\kk}^\hbar(\btheta)=\hbar^{\frac{m}{2}}\sum_{\kk\in\bZ^m} w_\hbar\bigl (\sqrt{\lambda(\kk)}\;\bigr)^{\frac{1}{2}}\,N_{\kk}\psi_{\kk}(\btheta)\\
=\hbar^{\frac{m}{2}}\sum_{\kk\in\bZ^m}w\bigl(2\pi\hbar|\kk|\bigr)^{\frac{1}{2}}\;\bigl(\, A_{\kk}\cos(2\pi\lan\kk,\btheta\ran)+B_{\kk} \sin(2\pi \lan\kk,\btheta\ran)\,\bigr),
\end{split}
\label{uve}
\end{equation}
where the coefficients $A_{\kk}, B_{\kk},N_{\kk}$, $\kk\in\bZ^m$, are independent standard normal random variables. 

Note that if $w\equiv 1$ in a neighborhood of $1$, then the random function $\hbar^{-m/2}X^\hbar$ converges to a Gaussian white-noise on $\bT^m$ and, extrapolating, we can think of the $\hbar\to 0$ limits  in this paper  as white-noise limits.

The random function $X^\hbar(\btheta)$ is a.s. smooth and Morse. For any Borel set $\eB\subset \bT^m$ we  denote  by $\bsZ(X^\hbar, \eB)$ the number of critical points   of $X^\hbar$ in $\eB$. In \cite{Nifluct} we have shown that   there exist constants $C=C_m(w)>0$, $S=S_m(w)\geq 0$ such that, for any open set $\eO\subset \bT^m$,
\[
\bE\bigl[\,\bsZ(X^\hbar,\eO)\,\bigr]\sim  C_m(w)\hbar^{-m}\vol(\eO)\;\;\mbox{as $\hbar\to 0$}.
\]
\[
\var[\,\bsZ(X^\hbar,\eO)\,\bigr]\sim  S_m(w)\hbar^{-m}\vol(\eO)\;\;\mbox{as $\hbar\to 0$}.
\]
In \cite{Nifluct} we described the constants $C_m(w)$  and $S_m(w)$ explicitly as certain  rather  complicated Gaussian integrals, and  we conjectured that $S_m(w)$ is actually strictly positive.

In  this paper  we will show  that indeed $S_m(w)>0$, and we will  prove a  central limit theorem  stating that, as $\hbar\to 0$, the random variables
\[
\zeta_\hbar(\eO):=\left(\frac{\hbar}{\vol(\eO)}\right)^{\frac{m}{2}}\Bigl(\, \bsZ(X^\hbar,\eO)-\bE\bigl[\,\bsZ(X^\hbar,\eO)\,\bigr]\,\Bigr)
\]
converge in law to a \emph{nondegenerate}   normal random variable $\sim\eN\bigl(\, 0, S_m(w)\,\bigr)$.  

Our approach  relies  on abstract central limit results of the type pioneered by  Breuer and Major \cite{BrMaj}.  This requires placing the the problem within a  Gaussian Hilbert space context.   To achieve this we imitate the  strategy  employed by    Aza\"{i}s and Le\'{o}n \cite{AzLe} in a related $1$-dimensional problem.

\subsection{The Wiener chaos  setup}   Let $\bx=(x_1,\dotsc, x_m)$ denote the   standard  Euclidean coordinates on $\bR^m$.  For $\bp_0\in\bR^m$ and  $R>0$ we set
\[
\whB_R(\bp_0):=\Bigl\{\bx\in\bR^m;\;\;|\bx-\bp_0|_\infty\leq \frac{R}{2}\,\Bigr\},\;\;\whB_R=\whB_R(0)=\Bigl[-\frac{R}{2},\frac{R}{2}\Bigr]^m.
\]
 For  $r\in (0,1]$ and denote by $B_{r}$ the image of the cube $\whB_{r}$ in the quotient $\bR^m/\bT^m$.  Thus, for $r<1$, $B_{r}$ is a cube on the torus centered at $0$, while $B_1=\bT^m$. 
 
 We identify the tangent space $T_0\bT^m_\hbar$ with $\bR^m$ and we denote by $\exp_\hbar$ the exponential map $\exp_\hbar: \bR^m\to\bT^m_\hbar$ defined by the metric   $g_\hbar$. In the coordinates $\bx$ on $\bR^m$ and  $\btheta$ on $\bT^m$, this map is described by $\btheta=\hbar\bx \bmod \bZ^m$. Using this map we obtain  by pullback a  $(\hbar^{-1}\bZ)^m$-periodic random  function on $\bR^m$,
 \[
 Y^\hbar(\bx):=(\exp^*_\hbar X^\hbar)(\bx)=\hbar^{m/2}\sum_{\kk\in\bZ^m}w\bigl(2\pi\hbar|\kk|\bigr)^{\frac{1}{2}}\;\bigl(A_{\kk}\cos(2\pi\hbar\lan\kk,\bx\ran)+B_{\kk} \sin(2\pi\hbar \lan\kk,\bx\ran)\bigr).
 \]
We denote  by $\bsZ(Y^\hbar, \eB)$ the number of critical points of $Y^\hbar$ in the Borel set $\eB\subset\bR^m$. Note that
\begin{equation}\label{z_resc}
\bsZ\bigl(\, X^\hbar,  B_{r}\bigr)=\bsZ\bigl(\,Y^\hbar, \whB_{\hbar^{-1}r}\,\bigr),\;\;\forall r\in (0,1].
\end{equation}
To investigate $\bsZ\bigl(\,Y^\hbar, \whB_{\hbar^{-1}r}\,\bigr)$ it is convenient  to  give  an alternate  description  to  the random function $Y^\hbar$.

A simple computation shows  that the  covariance kernel of $Y^\hbar$ is
  \begin{equation}\label{covyve}
  \begin{split}
 \eK^\hbar(\bx,\by)=\hbar^m\sum_{\kk\in\bZ^m }w_\hbar (2\pi\hbar|\kk|)\cos 2\pi\hbar\bigl\lan\, \kk, \underbrace{\by-\bx}_{\bz}\,\bigr\ran\\
 =\hbar^m\sum_{\kk\in\bZ^m }w (2\pi\hbar|\kk|)\exp\bigl(\,-2\pi\hbar\ii\lan\kk, \bz\ran\,\bigr).
 \end{split}
 \end{equation}
Define 
\[
\phi_\bz:\bR^m\to\bC,\;\;\phi_\bz(\boxi):=e^{ -\ii\lan\boxi,\bz\ran}w(|\boxi|).
\]
Using  the Poisson formula \cite[\S 7.2]{H1} we deduce  that for any $a>0$ we have
\[
\sum_{\kk\in\bZ^m}\phi_\bz\left(\frac{2\pi}{a}\kk\right)=\left(\frac{a}{2\pi}\right)^m \sum_{\bnu\in\bZ^m}\widehat{\phi_\bz}(a\bnu), 
\]
where,  for any $u=u(\boxi)\in\eS(\bR^m)$,  we denote by $\widehat{u}(\bt)$ its Fourier transform
\begin{equation}\label{fourier}
\widehat{u}(\bt)=\int_{\bR^m} e^{-\ii\lan\boxi,\bt\ran} u(\boxi)|d\boxi|. 
\end{equation}
If we let  $a=\hbar^{-1}$, then we deduce
\[
\eK^\hbar(\bx,\by)=\frac{1}{(2\pi)^m}\sum_{\bnu\in\bZ^m}\widehat{\phi_\bz}\left(\,\hbar^{-1}\bnu\,\right). 
\]
Define $V:\bR^m\to \bR$ by
\begin{equation}\label{vxi}
V(\bt): =\frac{1}{(2\pi)^m}\widehat{w}(\bt)=\frac{1}{(2\pi)^m}\int_{\bR^m} e^{-\ii\lan\boxi,\bt\ran} w(|\boxi|)\, |d\bx|.
\end{equation}
Then
\[
\widehat{\phi_\bz}(\bt)=V\bigl(\;\bz+\bt\;\bigr). 
\]
We deduce
\[
\eK^\hbar(\bx,\by)=\sum_{\bnu\in\bZ^m}V\left(\,\bz+\frac{1}{\hbar}\bnu\right),\;\;\bz=\by-\bx.
\]
We set
\begin{equation}
V^\hbar(\bz):=\sum_{\bnu\in\bZ^m} V\left(\,\bz+\frac{1}{\hbar}\bnu\,\right).
\label{eq: vve}
\end{equation}
The function $V^\hbar$ is $(\hbar^{-1}\bZ)^m$-periodic  and moreover,
\begin{equation}\label{eq: cov-asy0}
\eK^\hbar(\bx,\by) =  V^\hbar\bigl(\,\bz\,\bigr),\;\;\bz:=\by-\bx.
\end{equation}
The region $\whB_{1/\hbar}=[-(2\hbar)^{-1},(2\hbar)^{-1}]^m\subset \bR^m$  is a fundamental domain for the action of the lattice  $(\hbar^{-1}\bZ)^m$ on $\bR^m$. From the special  form (\ref{eq: vve}) of $V^\hbar$ and the  fact that $V$ is a Schwartz function we deduce  that for any positive integers $k, N$  we have
 \begin{equation}
 \Vert V^\hbar-V\Vert_{C^k(\whB_{1/\hbar})}=O(\hbar^N)\;\;\mbox{as $\hbar\searrow 0$}.
 \label{eq: vve_appr}
 \end{equation}

From (\ref{covyve})  we deduce  that  $Y^\hbar$ is a stationary Gaussian random  function on $\bR^m$ and its  spectral measure is
\[
\mu^\hbar(d\boxi) =\frac{1}{(2\pi)^m}\sum_{\bnu\in (2\pi\hbar\bZ)^m}(2\pi\hbar)^mw(\bnu)\delta_{\bnu},
\]
where $\delta_{\bnu}$ denotes the Dirac measure on $\bR^m$ concentrated at  $\bnu$.

Let us observe that, as $\hbar \to 0$, the measures $\mu^\hbar(|d\boxi|)$ converge to the  measure 
\[
 \mu^0(|d\boxi|):=\frac{1}{(2\pi)^m}w\bigl(\,|\boxi|\,\bigr)\, |d \boxi|
 \]
in the following sense: for any Schwartz function $u:\bR^m\to\bR$ we have
\[
\int_{\bR^m} u(\boxi)\mu^\hbar(|d\boxi|)= \int_{\bR^m} u(\boxi)\mu^0(|d\boxi|).
\]
  Denote by $Y^0$ the  stationary, Gaussian random function on $\bR^m$ with spectral measure $\mu^0(|d\boxi|)$. Its covariance  kernel is   
  \[
 \eK^0(\bx,\by)= \frac{1}{(2\pi)^m}\int_{\bR^m} e^{-\ii \lan \boxi,\by-\bx\ran} w\bigl(\,|\boxi|\,\bigr)\, |d\boxi|=V(\by-\bx).
 \]
  From (\ref{eq: cov-asy0}) and (\ref{eq: vve_appr}) we deduce that
 \[
 \eK^\hbar\to  \eK^0\;\;\mbox{in $C^\infty$ as $\hbar\to 0$}
 \]
 This suggests that  the statistics of $Y^\hbar$  ought to be ``close'' to the statistics  of $Y^0$.
 
 It is convenient  to give a white noise description of these  random functions.   Recall (see \cite[Chap.7]{Jan}) that a Gaussian white-noise on  $\bR^m$ is a random  measure $W(-)$ that associates to each Borel set $A\in\eB(\bR^m)$ a centered Gaussian random variable $W(A)$ with the property that
 \[
 \bE\bigl[ \, W(A)W(B)\,\bigr]= |A\cap B|.
 \]
 The fact that $Z(-)$  is a random \emph{measure}    is equivalent in this case to the condition
 \[
 W(A\cup B)= W(A)+ W(B),\;\;\forall A, B\in\eB(\bR^m),\;\;A\cap B=\emptyset.
 \]
 Equivalently, a Gaussian white-noise on $\bR^m$  is characterized by a probability space $(\Omega,\eF,\bP)$   and  an isometry 
 \[
 \bsI=\bsI_W: L^2(\bR^m,d\boxi)\to L^2(\Omega,\eF,\bP)
 \]
 onto a Gaussian  subspace of $L^2(\Omega,\eF,\bP)$. More precisely, for $f\in L^2(\bR^m,d\boxi)$ the  Gaussian  random variable $\bsI_W[f]$  is the Ito integral
 \[
 \bsI_W(f)=\int_{\bR^m} f(\xi) W(d\boxi).
 \]
 The isometry property of the Ito integral reads
 \[
 \bsE\Bigl[\,\bsI_W(f)\bsI_W(g)\,\Bigr]=\int_{\bR^m} f(\boxi) g(\boxi)\,d\boxi.
 \]
 In particular
 \[
 W(A)=\bsI_W[\bone_A],\;\;\forall A\in\eB(\bR^m).
 \]
 The existence of  Gaussian white noises is a well settled fact, \cite{GeVi2}. 
 
 Fix  two independent Gaussian white-noises $W_1, W_2$  on $\bR^m$  defined  on the   the probability space $(\Omega,\eF,\bP)$. Let $\bsI_1$ and respectively $\bsI_2$ their associated Ito integrals,
 \[
 \bsI_1,\bsI_2: L^2(\bR^m,d\boxi)\to L^2(\Omega,\eF,\bP).
 \]
 The independence of the white noises  $\bsW_1$ and $\bsW_2$  is equivalent to the condition
 \[
 \bE\bigl[\, \bsI_1(f)\bsI_2(g)\,\bigr]=0,\;\;\forall f,g \in L^2(\bR^m, d\boxi).
 \]
 This shows that we have a  well defined  isometry
 \begin{equation}\label{ito}
 \bsI:   \underbrace{L^2(\bR^m, d\boxi)\times L^2(\bR^m, d\boxi)}_{\mathfrak{H}}\to L^2(\Omega,\eF,\bP),\;\;\bsI( f_1\oplus f_2)=\bsI_1(f_1) +\bsI_2(f_2).
 \end{equation}
 whose image is a Gaussian Hilbert subspace $\eX\subset   L^2(\Omega,\eF,\bP)$.  The map $\bsI$ describes an isonormal Gaussian process parametrized by $\mfH$.  We will use  $\bsI$ to give alternate descriptions to  the functions $Y^\hbar$, $\hbar\geq 0$.
 
 For each $\blam_0\in \bR^m$  and $r>0$ we denote by $C_r\bigl(\blam^0\bigr)$  the cube\footnote{The astute reader may  have observed that $C_r(\blam_0)=\whB_r(\blam^0)$ and  may wonder why the new notation. The reason for  this   redundancy is that the cubes   $\whB$ and $C$ live in \emph{different} vector spaces, dual to each other. The cube $\whB$ lives in the space with coordinates  $\bx$ and $C$ lives in the dual frequency  space with coordinates $\boxi$.} of  size $r$ centered at $\blam_0$  i.e.,
 \[
 C_r\bigl(\blam_0\bigr)=\Bigl\{ \boxi\in\bR^m;\;\;|\boxi-\blam_0|_\infty\leq \frac{r}{2}\,\Bigr\},
 \]
For $\kk\in \bZ^m$ we set  $C_\kk:=C_1(\kk)$. For each $\bx\in \bR^m$ and $\hbar>0$ we set
 \[
 \tilde{Y}^\hbar(\bx):=\sum_{\kk\in\bZ^m} \int_{\bR^m}\sqrt{w(2\pi\hbar |\kk|)}\cos 2\pi\hbar\lan\kk,\bx\ran \bone_{\hbar C_\kk}(\boxi) W_1(d\boxi) 
 \]
 \[
 + \sum_{\kk\in\bZ^m} \int_{\bR^m}\sqrt{w(2\pi\hbar |\kk|)}\sin 2\pi\hbar\lan\kk,\bx\ran \bone_{\hbar C_\kk}(\boxi) W_2(d\boxi) \in\eX .
 \]
 The isometry property of $\bsI$   and (\ref{covyve}) show that
 \[
 \bE\bigl[\, \tilde{Y}^\hbar(\bx)\tilde{Y}^\hbar(\by)\,\bigr]=\eK^\hbar(\bx,\by).
 \]
Thus, the random function $\tilde{Y}^\hbar$ is stochastically equivalent to $Y^\hbar$. Next define
 \[
 \tilde{Y}^0(\bx)=\int_{\bR^m}\sqrt{w(2\pi|\xi|)}\cos 2\pi\lan\boxi,\bx\ran W_1(d\boxi)  +\int_{\bR^m}\sqrt{w(2\pi|\xi|)}\sin 2\pi\lan\boxi,\bx\ran W_2(d\boxi) \in\eX.
 \]
Then
\[
 \bE\bigl[\, \tilde{Y}^0(\bx)\tilde{Y}^0(\by)\,\bigr]=\int_{\bR^m}w\bigl(\, 2\pi|\boxi|\,\bigr) \cos 2\pi\lan\xi,\by-\bx\ran d\boxi
 \]
 \[
 =\int_{\bR^m} e^{-2\pi\ii \lan \boxi,\by-\bx\ran} w\bigl(\,2\pi|\boxi|\,\bigr)\, |d\boxi|=\eK^0(\bx,\by).
 \]
 Thus, the random function $\tilde{Y}^0$ is stochastically equivalent to $Y^0$.  
 
 The above  discussion  shows   that  we can assume that the  Gaussian random variables $Y^\hbar(\bx)$, $\bx\in\bR$, $\hbar\geq 0$, live inside the \emph{same} Gaussian Hilbert space ${\eX}$.
 
 We denote by $\widehat{\eF}$ the  $\si$-subalgebra of $\eF$  generated by the random variables $\bsI(f_1\oplus f_2)$, $f_1\oplus f_2\in\mathfrak{H}$ and we denote by $\widehat{\eX}$ the \emph{Wiener chaos},  \cite{Jan, Maj},
 \begin{equation}\label{whX}
 \widehat{\eX}: =L^2\bigl(\Omega,\widehat{\eF},\bP\,\bigr).
 \end{equation}

\subsection{Statements of the main results} In the sequel we will use the notation $Q_\hbar=O(\hbar^{\infty})$ to indicate that,  for any $N\in\bN$, there exists a constant $C_N>0$ such that 
\[
|Q_\hbar|\leq  C_N\hbar^N\;\;\mbox{as $\hbar\searrow 0$}.
\]
\begin{theorem}\label{th: main1}  Fix a function $N:(0,\infty)\to\bN$ , $\hbar\mapsto N_\hbar$ such that 
\[
 2 N_\hbar   \leq \frac{1}{\hbar},\;\;\forall \hbar>0.
\tag{$\dag$}
\label{dag}
\]
Then, for any box $B\subset \bR^m$ we have
\begin{subequations}
\begin{equation}\label{main1}
\bE\bigl[\,\bsZ(Y^0, B)\,\bigr]=\bar{Z}_0\,|B|,\;\;\bar{Z}_0=\frac{1}{\sqrt{\det(-2\pi\nabla^2 V(0))}} \,\bE\bigl[\,|\det \nabla^2 Y^0(0)|\,\bigr],
\end{equation}
\begin{equation}\label{main2}
\bE\bigl[\,\bsZ(Y^\hbar, \whB_{2N_\hbar})\,\bigr]= \bE\bigl[\,\bsZ(Y^0, \whB_{2N_\hbar})\,\bigr]+O(\hbar^\infty)
\end{equation}
\end{subequations}\qed
\end{theorem}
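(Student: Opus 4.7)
My strategy is to apply the Kac-Rice formula to the stationary Gaussian fields $Y^0$ and $Y^\hbar$ and to exploit the $C^\infty$ closeness of their covariance kernels on the fundamental domain $\whB_{1/\hbar}$ recorded in (\ref{eq: vve_appr}). The key observation is that the two expected values are each a constant (the Kac-Rice density) times the volume of the integration region, and the constants are smooth functions of a finite collection of partial derivatives of the covariance kernel at the origin.

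\textbf{Part (\ref{main1}).} Since $Y^0$ is a.s. smooth and Morse, and $\Cov(\nabla Y^0(\bx))=-\nabla^2 V(0)$ is positive definite (radial symmetry and positivity of $w$ make it a positive multiple of the identity), the Kac-Rice formula applies and gives
\[
\bE\bigl[\,\bsZ(Y^0,B)\,\bigr] = \int_B p_{\nabla Y^0(\bx)}(0)\,\bE\bigl[\,|\det \nabla^2 Y^0(\bx)|\,\big|\,\nabla Y^0(\bx)=0\,\bigr]\,d\bx.
\]
Stationarity makes the integrand independent of $\bx$, and the Gaussian density of $\nabla Y^0(0)$ evaluated at the origin equals $[\det(-2\pi\nabla^2 V(0))]^{-1/2}$. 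Finally, because $w$ is even, the spectral measure is reflection-symmetric, so the odd-order derivatives $\nabla Y^0(0)$ are uncorrelated with — hence jointly Gaussian independent of — the even-order derivatives $\nabla^2 Y^0(0)$. This lets us drop the conditioning and yields (\ref{main1}).

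\textbf{Part (\ref{main2}).} The identical reasoning applies to the stationary field $Y^\hbar$ (with kernel $V^\hbar$, even in $\bz$ by construction), producing a constant $\bar Z_\hbar$ with
\[
\bE\bigl[\,\bsZ(Y^\hbar,\whB_{2N_\hbar})\,\bigr] = (2N_\hbar)^m\,\bar Z_\hbar,\qquad \bE\bigl[\,\bsZ(Y^0,\whB_{2N_\hbar})\,\bigr] = (2N_\hbar)^m\,\bar Z_0.
\]
The constant $\bar Z_\hbar$ is a smooth function of the finitely many partial derivatives $\partial^\alpha V^\hbar(0)$ of order $|\alpha|\le 4$ (which determine the joint Gaussian law of $\nabla Y^\hbar(0)$ and $\nabla^2 Y^\hbar(0)$), on the open region where the gradient covariance is nonsingular. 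Since $0\in \whB_{1/\hbar}$, (\ref{eq: vve_appr}) gives $\partial^\alpha V^\hbar(0)-\partial^\alpha V(0)=O(\hbar^\infty)$ for every multi-index, so the nondegeneracy persists for small $\hbar$ and $\bar Z_\hbar-\bar Z_0=O(\hbar^\infty)$. Combining with the bound $(2N_\hbar)^m\le \hbar^{-m}$ supplied by hypothesis (\ref{dag}),
\[
\bigl|\bE\bigl[\,\bsZ(Y^\hbar,\whB_{2N_\hbar})\,\bigr]-\bE\bigl[\,\bsZ(Y^0,\whB_{2N_\hbar})\,\bigr]\bigr| \le \hbar^{-m}\cdot O(\hbar^\infty) = O(\hbar^\infty),
\]
which is (\ref{main2}).

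\textbf{Principal difficulty.} The technical core lies in (i) verifying the Kac-Rice hypotheses for $Y^\hbar$ uniformly in small $\hbar$ — namely nondegeneracy of the single-point gradient distribution and almost-sure Morseness — and (ii) upgrading the smooth dependence of the Kac-Rice density on covariance data into a quantitative Lipschitz-type estimate that converts infinite-order closeness of covariance derivatives into infinite-order closeness of $\bar Z_\hbar$ to $\bar Z_0$. The calibration in hypothesis (\ref{dag}) is precisely designed so that after multiplication by the volume factor $(2N_\hbar)^m\le\hbar^{-m}$ the residual error remains $O(\hbar^\infty)$; any weaker assumption on $N_\hbar$ would require finer estimates to avoid losing the infinite-order cancellation.
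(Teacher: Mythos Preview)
Your argument is correct and rests on the same underlying idea as the paper---Kac--Rice plus stationarity plus the $O(\hbar^\infty)$ closeness of $V^\hbar$ to $V$ at the origin from (\ref{eq: vve_appr})---but the paper routes the computation differently. Rather than invoking Kac--Rice directly, the paper first establishes the Hermite/Wiener-chaos decomposition of $\bsZ^\hbar(B)$ (Proposition~\ref{prop: key} and (\ref{za})--(\ref{zb})); the expectation then falls out as the zeroth chaos component, yielding the same constant $(2\pi)^{-m/2}\omega_\hbar\,\bE[|\det\nabla^2 Y^\hbar(0)|]$. For the comparison $\bar Z_\hbar-\bar Z_0=O(\hbar^\infty)$ the paper does not rely on your smooth-dependence claim (which is in fact correct, since integrating the Lipschitz function $|\det|$ against a Gaussian density produces a smooth function of the covariance, but you leave this unargued); instead it proves an explicit H\"older-$\tfrac12$ estimate, Proposition~\ref{prop: diff-gauss}, for $A\mapsto\bE_A[|\det|]$, which already suffices for $O(\hbar^\infty)$. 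Finally, the paper tiles $\whB_{2N_\hbar}$ by unit cubes indexed by the affine lattice $\bA^m$ before summing---a step that is redundant for the expectation (your direct multiplication by $(2N_\hbar)^m\le\hbar^{-m}$ is cleaner here) but anticipates the lattice decomposition used later in the variance and CLT arguments.
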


For simplicity, for any Borel subset $B\subset\bR^m$,  and any $\hbar\in[0,\hbar_0]$ we set
\[
\bsZ^\hbar(B):=\bsZ^\hbar(Y^\hbar, B),\;\;\zeta^\hbar(B)=|B|^{-1/2}\Bigl(\,\bsZ^\hbar(B)-\bE\bigl[\,\bsZ^\hbar(B)\,\bigr]\,\Bigr).
\]
For  $R>0$  we set
\begin{equation}\label{nota}
\bsZ^\hbar(R):=\bsZ^\hbar(\whB_R),\;\;\zeta^\hbar(R)=\zeta^\hbar(\whB_R).
\end{equation}
\begin{theorem}\label{th: main2} There exists  a  number  $S^0>0$ such that,  for any function  
\[
N:(0,\infty)\to\bN, \;\;\hbar\mapsto N_\hbar,
\]
  satisfying  
\[
 2 N_\hbar \leq \frac{1}{2\hbar},\;\;\forall \hbar>0,
\tag{$\ddag$}
\label{ddag}
\]
and
\[
\lim_{\hbar\to 0}  N_\hbar =\infty,
\tag{$\ast$}
\label{ast}
\]
the following hold.

\begin{enumerate}

\item As $\hbar\to 0$  
\[
\var\bigl[\, \bsZ^\hbar(2N_\hbar)\,\bigr]\sim S^0 \cdot (2N_\hbar)^m\sim \var\Bigl[ \bsZ^0 (2N_\hbar)\Bigr].
\]

\item The families of random variables
\[
\Bigl\{\,\zeta^\hbar(2N_\hbar)\,\Bigr\}_{\hbar\in(0,\hbar_0]}\;\;\mbox{and}\;\;\Bigl\{\, \zeta^0(2N_\hbar)\,\Bigr\}_{\hbar\in(0,\hbar_0]}
\]
converge in distribution  as $\hbar\to 0$ to normal random variables $\sim \eN(0, S^0)$.
\end{enumerate}
\end{theorem}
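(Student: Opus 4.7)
The strategy follows Aza\"{i}s--Le\'{o}n in the spirit indicated by the paper: express the centered critical point count inside the Wiener chaos $\widehat{\eX}$, then invoke a continuous multivariate Breuer--Major CLT for the stationary Gaussian jet field on $\bR^m$. All random variables are viewed as living in the single Gaussian Hilbert space $\eX$ via the coupling (\ref{ito}).

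First, I would write the count via Kac--Rice in distributional form,
\[
\bsZ^\hbar(\whB_R) \;=\; \int_{\whB_R} \delta_0\bigl(\nabla Y^\hbar(\bx)\bigr)\, \bigl|\det \nabla^2 Y^\hbar(\bx)\bigr|\, d\bx,
\]
regularised by a smooth mollifier $\rho_\varepsilon$ that produces an approximate count $\bsZ^{\hbar,\varepsilon}(\whB_R)$ converging in $L^2$ to $\bsZ^\hbar(\whB_R)$ as $\varepsilon\to 0$ by the a.s.\ Morse property. The jet $J^\hbar(\bx) := \bigl(\nabla Y^\hbar(\bx),\nabla^2 Y^\hbar(\bx)\bigr)$ is a stationary centered Gaussian vector field whose covariance is an algebraic expression in derivatives of $\eK^\hbar$. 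Expanding $F^\varepsilon(\bu,M) := \rho_\varepsilon(\bu)|\det M|$ in multivariate Hermite polynomials adapted to the law of $J^\hbar(0)$ gives the chaos decomposition
\[
F^\varepsilon\bigl(J^\hbar(\bx)\bigr) - \bE F^\varepsilon\bigl(J^\hbar(\bx)\bigr) \;=\; \sum_{q\geq 1} F_q^{\hbar,\varepsilon}\bigl(J^\hbar(\bx)\bigr),
\]
with the $q$-th summand lying in the $q$-th Wiener chaos component of $\widehat{\eX}$.

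Second, I would establish the CLT in the white-noise limit $\hbar=0$ by the continuous multivariate Breuer--Major theorem (Arcones, Kratz--Le\'{o}n). The covariance entries of $J^0$ are derivatives of $V = (2\pi)^{-m}\widehat{w}$, hence Schwartz, hence in every $L^q(\bR^m)$. The theorem then yields, for each fixed $q,\varepsilon$,
\[
\frac{1}{(2N_\hbar)^{m/2}} \int_{\whB_{2N_\hbar}} F_q^{0,\varepsilon}\bigl(J^0(\bx)\bigr)\, d\bx \;\xrightarrow{d}\; \eN\bigl(0, s_q^\varepsilon\bigr).
\]
Uniform $L^2$ tail control $\sum_{q>Q}s_q^\varepsilon \to 0$ from Parseval in the chaos expansion, joint finite-dimensional asymptotic normality of the truncations, and finally passage $\varepsilon \to 0$ by dominated convergence, deliver the CLT for $\zeta^0(2N_\hbar)$ with asymptotic variance $S^0$ and the variance asymptotic.

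Third, I would transfer the conclusion to $\hbar>0$. Under $(\ddag)$ the box $\whB_{2N_\hbar}$ lies strictly inside the fundamental domain $\whB_{1/\hbar}$, so by (\ref{eq: vve_appr}) the kernel $\eK^\hbar$ equals $\eK^0$ up to $O(\hbar^\infty)$ in every $C^k$-norm on that box. Propagating this smooth covariance approximation to the jets, one obtains for each fixed $q,\varepsilon$
\[
\sup_{\bx \in \whB_{2N_\hbar}}\bE\Bigl|\, F_q^{\hbar,\varepsilon}\bigl(J^\hbar(\bx)\bigr) - F_q^{0,\varepsilon}\bigl(J^0(\bx)\bigr)\,\Bigr|^2 \;=\; O(\hbar^\infty).
\]
After normalisation by $(2N_\hbar)^{-m/2}$ with $2N_\hbar \leq (2\hbar)^{-1}$, these errors accumulate to $o(1)$, so $\zeta^\hbar(2N_\hbar) - \zeta^0(2N_\hbar) \to 0$ in $L^2$. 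Both families therefore share the same Gaussian limit.

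The main obstacle is the \emph{strict positivity} $S^0>0$. The plan is to exhibit a single nonvanishing Hermite coefficient in the expansion of $F^\varepsilon$ (for instance, the purely second-order-derivative component $H_\alpha(\partial^2 Y^0(0))$ obtained by integrating out the gradient variables against $\delta_0$), and to verify that the associated Breuer--Major variance is a strictly positive convolution integral of Schwartz covariance entries — positivity coming from the fact that $w(|\boxi|) \geq 0$ is not identically zero, so the spectral measure of $J^0$ has full support in an open cone. Orthogonality of distinct chaos components then prevents any cancellation, forcing $S^0>0$.
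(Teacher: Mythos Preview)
Your overall architecture for the case $\hbar=0$ is fine and matches what the paper cites from \cite{Niclt}. The gap is in Step~3, the transfer from $\hbar=0$ to $\hbar>0$.

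You write that closeness of the covariance kernels $\eK^\hbar$ and $\eK^0$ to order $O(\hbar^\infty)$ ``propagates'' to
\[
\sup_{\bx\in\whB_{2N_\hbar}}\bE\Bigl|\,F_q^{\hbar,\varepsilon}\bigl(J^\hbar(\bx)\bigr)-F_q^{0,\varepsilon}\bigl(J^0(\bx)\bigr)\,\Bigr|^2=O(\hbar^\infty),
\]
and then conclude $\zeta^\hbar(2N_\hbar)-\zeta^0(2N_\hbar)\to 0$ in $L^2$. This conflates closeness of \emph{marginals} with closeness of \emph{coupled random variables}. The displayed expectation expands as a sum of three terms, and the cross term $\bE\bigl[F_q^{\hbar,\varepsilon}(J^\hbar(\bx))\,F_q^{0,\varepsilon}(J^0(\bx))\bigr]$ depends on the cross-covariances $\bE[J^\hbar_i(\bx)J^0_j(\bx)]$ under the white-noise coupling (\ref{ito}). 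Under that coupling the representative of $Y^\hbar(\bx)$ in $\mathfrak{H}$ is the step function $\sum_{\kk}\sqrt{w(2\pi\hbar|\kk|)}\cos(2\pi\hbar\lan\kk,\bx\ran)\bone_{\hbar C_\kk}$, a mesh-$\hbar$ piecewise-constant approximation to the smooth representative of $Y^0(\bx)$. Their inner product therefore agrees with $\eK^0(\bx,\bx)$ only up to a \emph{fixed} power of $\hbar$ (a Riemann-sum error), not to $O(\hbar^\infty)$; the Poisson summation miracle behind (\ref{eq: vve_appr}) applies to $\eK^\hbar$ alone, not to this mixed pairing. Consequently $\bE|J^\hbar(\bx)-J^0(\bx)|^2$ is at best $O(\hbar^2)$, the naive accumulation gives $\|\zeta^\hbar_q-\zeta^0_q\|_{L^2}=O\bigl((2N_\hbar)^{m/2}\hbar\bigr)=O(\hbar^{1-m/2})$, and your $L^2$ comparison fails as soon as $m\geq 2$. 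The claim $\zeta^\hbar(2N_\hbar)-\zeta^0(2N_\hbar)\to 0$ in $L^2$ is simply not established (and there is no reason to expect it under this coupling).

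The paper avoids this entirely: it never compares $\zeta^\hbar$ and $\zeta^0$ as random variables. Instead it verifies the hypotheses of the abstract chaotic CLT \cite[Thm.~6.3.1]{NP} \emph{directly} for $\zeta^\hbar(2N_\hbar)$. All three ingredients---the chaos-wise variance limits (\ref{vq2}), the uniform tail bound $\sup_\hbar\sum_{q>Q}S^\hbar_q\to 0$ of Lemma~\ref{lemma: VQ} (proved via Arcones' inequality and the decay of $\psi^\hbar$), and the vanishing of contraction norms in (\ref{EL18})---are expressed purely through the covariances $\Gamma^\hbar_{ij}(\bu)$, and \emph{at that level} the replacement $\Gamma^\hbar\to\Gamma^0$ really is $O(\hbar^\infty)$ by (\ref{eq: vve_appr}) on the domain $\whB_{4N_\hbar}\subset\whB_{1/\hbar}$ guaranteed by (\ref{ddag}). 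The uniform-in-$\hbar$ tail control is the genuinely delicate step your plan skips; without it one cannot pass from chaos-wise normality to normality of the full sum uniformly in $\hbar$. Your positivity sketch is in the right spirit: the paper likewise isolates a single chaos (the second) and shows $\bar{S}^0_2>0$, deferring the computation to \cite{Niclt}.
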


Recall that  for any $r\in (0,1]$  we have denoted by $B_r\subset \bT^m$  the image of $\whB_r$ under the natural projection $\bR^m\to\bT^m$.   Note that $B_1=\bT^m$ and recall (\ref{z_resc}),
\[
\bsZ(Y^\hbar, \whB_{\hbar^{-1}r})=\bsZ(X^\hbar, B_r),\;\;\forall r\in (0,1].
\]
If $r$ is  fixed in $(0,1/2]$,  but  $\hbar\to 0$ in such  a way that 
\[
\frac{r}{2\hbar}=N_\hbar\in\bN
\tag{$Q$}
\label{Q}
\] 
 then  Theorem \ref{th: main2}  provides an information on the behavior  on $\bsZ(X^\hbar, B_r)$ as $\hbar\to 0$.   Our next    shows that we reach the same conclusion without  assuming the quantization condition (\ref{Q}).
 
\begin{theorem}\label{th: main3} Let $\bar{Z}_0$ be as in (\ref{main1}) and  $S_0$ be as in Theorem \ref{th: main2}.  Assume\footnote{The assumption  $r\leq 1/2$ is the counterpart of (\ref{ddag}).} that  
\[
0<r\leq \frac{1}{2}.
\]
 Then   the following hold.

\begin{enumerate}

\item  As $\hbar\to 0$,
\[
\bE\bigl[\, \bsZ(X^\hbar, B_r)\,\bigr] = \hbar^{-m}\bigl(\,  \bar{Z}_0 \vol(B_r)+O(\hbar^\infty)\,\bigr).
\]

\item As $\hbar\to 0$ we have
\[
\var\bigl[\, \bsZ(X^\hbar, B_r)\,\bigr]\sim S_0 \hbar^{-m} \vol(B_r).
\]

\item As $\hbar\to 0$ the random variables 
\begin{equation}\label{z_tor}
\left(\frac{\hbar}{r}\right)^{\frac{m}{2}} \Bigl(\,\bsZ(X^\hbar, B_r)-\bE\bigl[\, \bsZ(X^\hbar, B_r)\,\bigr]\,\Bigr)
\end{equation}
converge in distribution to a random variable $\sim \eN(0, S^0)$.
\end{enumerate} 
\end{theorem}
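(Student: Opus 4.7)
The plan is to reduce Theorem \ref{th: main3} to Theorems \ref{th: main1} and \ref{th: main2} by approximating the (possibly non-integer) rescaled box $\whB_{\hbar^{-1}r}$ by an integer-sided cube. By (\ref{z_resc}), $\bsZ(X^\hbar, B_r) = \bsZ(Y^\hbar, \whB_{\hbar^{-1}r})$. Set $N_\hbar := \lfloor r/(2\hbar)\rfloor$. Since $r \leq 1/2$, we have $2N_\hbar \leq r/\hbar \leq 1/(2\hbar)$ and $N_\hbar \to \infty$ as $\hbar \to 0$, so both (\ref{ddag}) and (\ref{ast}) hold. Decompose
\[
\bsZ(Y^\hbar, \whB_{\hbar^{-1}r}) = \bsZ^\hbar(2N_\hbar) + R_\hbar, \qquad R_\hbar := \bsZ(Y^\hbar, S_\hbar), \qquad S_\hbar := \whB_{\hbar^{-1}r}\setminus\whB_{2N_\hbar}.
\]
Since $0 \leq r/\hbar - 2N_\hbar < 2$, the shell satisfies $|S_\hbar| = O(\hbar^{-(m-1)})$.

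For (i), stationarity of $Y^\hbar$ yields $\bE[\bsZ(Y^\hbar,A)] = d_\hbar\, |A|$ for every Borel $A$, where $d_\hbar := \bE[\bsZ(Y^\hbar, [0,1]^m)]$. Applying Theorem \ref{th: main1} with the constant choice $N_\hbar \equiv 1$ and combining (\ref{main1}) with (\ref{main2}) yields $d_\hbar = \bar Z_0 + O(\hbar^\infty)$. Hence $\bE[\bsZ(X^\hbar, B_r)] = d_\hbar (\hbar^{-1}r)^m = \hbar^{-m}\bigl(\bar Z_0 \vol(B_r) + O(\hbar^\infty)\bigr)$.

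For (ii) and (iii) the crucial ingredient is the shell variance bound $\var[R_\hbar] = O(\hbar^{-(m-1)})$. Granted this, Theorem \ref{th: main2}(i) gives $\var[\bsZ^\hbar(2N_\hbar)] \sim S^0 (2N_\hbar)^m \sim S^0 \hbar^{-m}\vol(B_r)$, while Cauchy--Schwarz bounds the cross term by $O(\hbar^{-(m-1/2)}) = o(\hbar^{-m})$; this proves (ii). For (iii), write
\[
\left(\frac{\hbar}{r}\right)^{m/2}\!\!\bigl(\bsZ(X^\hbar,B_r) - \bE[\bsZ(X^\hbar,B_r)]\bigr) = \left(\frac{2N_\hbar \hbar}{r}\right)^{m/2}\!\!\zeta^\hbar(2N_\hbar) + \left(\frac{\hbar}{r}\right)^{m/2}\!\!\bigl(R_\hbar - \bE[R_\hbar]\bigr).
\]
Since $2N_\hbar \hbar/r \to 1$ deterministically, Theorem \ref{th: main2}(ii) makes the first summand converge in distribution to $\eN(0,S^0)$; the second summand has variance $O(\hbar r^{-m}) \to 0$ and hence vanishes in $L^2$. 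Slutsky's theorem concludes.

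The main obstacle is the shell variance bound $\var[R_\hbar] = O(|S_\hbar|)$, which Theorem \ref{th: main2} does not supply directly since $S_\hbar$ is not a cube. I would establish it by revisiting the Wiener chaos/Kac--Rice representations behind Theorem \ref{th: main2}: cover $S_\hbar$ by $O(\hbar^{-(m-1)})$ unit sub-boxes, use stationarity of $Y^\hbar$ to obtain a uniform-in-$\hbar$ variance contribution per sub-box, and use the Schwartz decay of $V$ in (\ref{vxi})---inherited by the Hermite-rank-$\geq 2$ chaos components of the Kac--Rice approximants---to show that the sum of cross-covariances is absolutely convergent uniformly in $\hbar$. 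This effectively amounts to upgrading Theorem \ref{th: main2}'s variance estimate from cubes to arbitrary Borel sets with a controlled amount of boundary.
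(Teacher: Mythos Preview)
Your approach is correct; the shell-variance bound you isolate would indeed follow by reopening the estimates of Subsection~\ref{ss: 24}: cover $S_\hbar$ by $O(\hbar^{-(m-1)})$ boxes of bounded size, control the near-diagonal contribution via Proposition~\ref{prop: key} and the off-diagonal one via the Arcones-type decay used in Lemma~\ref{lemma: VQ}. The paper, however, takes a different route that sidesteps the shell entirely. Rather than writing $\whB_{\hbar^{-1}r}$ as an integer cube plus a remainder, it introduces the dilation factor $s_\hbar := r/(2\hbar N_\hbar)\to 1$, so that $\whB_{\hbar^{-1}r} = s_\hbar\whB_{2N_\hbar}$ \emph{exactly}, and then reruns the proofs of Theorems~\ref{th: main1} and~\ref{th: main2} with the lattice $\bA^m$ replaced by $s_\hbar\bA^m$ and the unit box $\whB_1$ by $\whB_{s_\hbar}$. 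Because $s_\hbar\to 1$, every constant and every estimate in those proofs survives unchanged and no new inequality is needed. Your argument is more modular---it treats Theorems~\ref{th: main1} and~\ref{th: main2} as black boxes, and your handling of part~(i) via stationarity is in fact cleaner than the paper's---but pays for that modularity with the extra shell estimate; the paper's rescaling absorbs the discrepancy into the existing machinery at the price of re-walking the earlier proofs.
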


\begin{corollary}\label{cor: main1} Let $r\in(0,1/2)$. Set
\[
\bar{\bsZ}^\hbar(X^\hbar, B_r):= \hbar^m  {\bsZ}^\hbar(X^\hbar, B_r).
\]
Let $(\hbar_n]$ be a  sequence of postive numbers such that, for some $p\in (0,m)$ we have
\[
\sum_{n\geq 1} \hbar_n^p<\infty.
\]
Then
\[
\bar{\bsZ}^{\hbar_n}(X^\hbar, B_r)\to  \bar{Z}_0 \vol(B_r) \;\;a.s..
\]
\end{corollary}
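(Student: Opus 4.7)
The plan is to prove almost sure convergence by combining the mean and variance estimates of Theorem \ref{th: main3} with the classical Chebyshev-Borel-Cantelli argument.

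First I would write the rescaled count as
\[
\bar{\bsZ}^{\hbar_n}(X^{\hbar_n},B_r)-\bar{Z}_0\vol(B_r)=\bigl(\bar{\bsZ}^{\hbar_n}(X^{\hbar_n},B_r)-\bE[\bar{\bsZ}^{\hbar_n}(X^{\hbar_n},B_r)]\bigr)+\bigl(\bE[\bar{\bsZ}^{\hbar_n}(X^{\hbar_n},B_r)]-\bar{Z}_0\vol(B_r)\bigr).
\]
By Theorem \ref{th: main3}(i), the deterministic bias term equals $O(\hbar_n^\infty)$ and hence tends to zero (indeed faster than any power). So the task reduces to showing that the random fluctuation term $\bar{\bsZ}^{\hbar_n}-\bE[\bar{\bsZ}^{\hbar_n}]$ converges to zero almost surely along the sequence $(\hbar_n)$.

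Next, by Theorem \ref{th: main3}(ii) and the definition $\bar{\bsZ}^\hbar=\hbar^m\bsZ(X^\hbar,B_r)$, we have
\[
\var\bigl[\bar{\bsZ}^{\hbar_n}(X^{\hbar_n},B_r)\bigr]=\hbar_n^{2m}\var\bigl[\bsZ(X^{\hbar_n},B_r)\bigr]\sim S_0\vol(B_r)\,\hbar_n^m.
\]
Fix $\varepsilon>0$. Chebyshev's inequality yields
\[
\bP\bigl[\,|\bar{\bsZ}^{\hbar_n}-\bE[\bar{\bsZ}^{\hbar_n}]|>\varepsilon\,\bigr]\leq \frac{\var[\bar{\bsZ}^{\hbar_n}]}{\varepsilon^2}\leq \frac{2 S_0\vol(B_r)}{\varepsilon^2}\,\hbar_n^m
\]
for $n$ large. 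Since $\sum_n\hbar_n^p<\infty$ forces $\hbar_n\to 0$, for all large $n$ we have $\hbar_n\leq 1$ and, because $p<m$, $\hbar_n^m\leq \hbar_n^p$. Consequently $\sum_n\hbar_n^m<\infty$, so $\sum_n\bP[|\bar{\bsZ}^{\hbar_n}-\bE[\bar{\bsZ}^{\hbar_n}]|>\varepsilon]<\infty$.

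Finally, the Borel-Cantelli lemma gives that for every $\varepsilon>0$ the event $\{|\bar{\bsZ}^{\hbar_n}-\bE[\bar{\bsZ}^{\hbar_n}]|>\varepsilon\}$ occurs only finitely often almost surely. Applying this to a countable sequence $\varepsilon=1/k$, $k\in\bN$, and intersecting the resulting full-measure sets, we conclude that $\bar{\bsZ}^{\hbar_n}-\bE[\bar{\bsZ}^{\hbar_n}]\to 0$ a.s., which combined with $\bE[\bar{\bsZ}^{\hbar_n}]\to \bar{Z}_0\vol(B_r)$ produces the claim. There is no real obstacle here once Theorem \ref{th: main3} is in hand; the only point requiring any care is verifying that the summability hypothesis on $\hbar_n^p$ indeed implies summability of $\hbar_n^m$, which holds precisely because $p<m$ and $\hbar_n\to 0$.
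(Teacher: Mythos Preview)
Your proof is correct and follows essentially the same Chebyshev--Borel--Cantelli route as the paper. The only cosmetic difference is that the paper applies Chebyshev with the shrinking threshold $\hbar_n^{(m-p)/2}$ to land directly on a probability bound $O(\hbar_n^p)$, whereas you use a fixed $\varepsilon$ and then observe that $\sum_n\hbar_n^m<\infty$ follows from $\sum_n\hbar_n^p<\infty$ and $p<m$; both arguments are equally valid.
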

\begin{proof} Set $\alpha:=\frac{m-p}{2}$ so that $p= m-2\alpha$.  Note that
\[
\bE\bigl[ \, \bar{\bsZ}^{\hbar_n}(X^\hbar, B_r)\,\bigr]= \bar{Z}_0 \vol(B_r)+O(\hbar^\infty)
\]
\[
\bigl[ \, \bar{\bsZ}^{\hbar_n}(X^\hbar, B_r)\,\bigr]\sim S_0\vol(B_r)\hbar^m.
\]
From Chebyshev's  inequality  we deduce
\[
\bP\bigl[\,  \bigl|\bar{\bsZ}^{\hbar_n}(X^\hbar, B_r)- \bar{Z}_0 \vol(B_r)\bigr|\geq h_n^\alpha\bigr] = O(\hbar_n^p\bigl).
\]
Then 
\[
\sum_{n\geq 1}\bP\bigl[\,  \bigl|\bar{\bsZ}^{\hbar_n}(X^\hbar, B_r)- \bar{Z}_0 \vol(B_r)\bigr|\geq h_n^\alpha\bigr]<\infty,
\]
and the first Borel-Cantelli lemma    implies the desired conclusion.
\end{proof} 

Our final result extends Theorem \ref{th: main3} to the degenerate case $r=1$ when the assumption (\ref{ddag}) is not satisfied.

\begin{theorem}\label{th: main4} Let $\bar{Z}_0$ be as in (\ref{main1}) and  $S_0$ be as in Theorem \ref{th: main2}.  Then   the following hold.

\begin{enumerate}

\item  As $\hbar\to 0$,
\[
\bE\bigl[\, \bsZ(X^\hbar, \bT^m)\,\bigr] = \hbar^{-m}\bigl(\,  \bar{Z}_0+O(\hbar^\infty)\,\bigr).
\]

\item As $\hbar\to 0$ we have
\[
\var\bigl[\, \bsZ(X^\hbar, \bT^m)\,\bigr]\sim S_0 \hbar^{-m}.
\]

\item As $\hbar\to 0$ the random variables 
\begin{equation}\label{z_tor1}
\hbar^{\frac{m}{2}} \Bigl(\,\bsZ(X^\hbar, \bT^m)-\bE\bigl[\, \bsZ(X^\hbar, \bT^m)\,\bigr]\,\Bigr)
\end{equation}
converge in distribution to a random variable $\sim \eN(0, S^0)$.

\end{enumerate}

\end{theorem}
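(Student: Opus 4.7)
The plan is to reduce Theorem \ref{th: main4} to Theorem \ref{th: main3} by partitioning the torus into $2^m$ translates of $B_{1/2}$. Choose $\{\bp_j\}_{j=1}^{2^m}=\{0,1/2\}^m\subset \bT^m$ and write $B_{1/2}(\bp_j)\subset\bT^m$ for the closed cube of side $1/2$ centered at $\bp_j$. These cubes cover $\bT^m$ with pairwise intersections of measure zero, and since $X^\hbar$ is a.s.\ Morse no critical points lie on these intersections, so
\[
\bsZ(X^\hbar,\bT^m)=\sum_{j=1}^{2^m}\bsZ(X^\hbar,B_{1/2}(\bp_j))\qquad\text{a.s.}
\]
The covariance kernel of $X^\hbar$ depends only on $\by-\bx$, so $X^\hbar$ is stationary on $\bT^m$ and each summand has the same law as $\bsZ(X^\hbar,B_{1/2})$. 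Since $2^m\vol(B_{1/2})=1$, Theorem \ref{th: main3}(1) immediately yields part (1).

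For part (2), set $U_{\hbar,j}:=\bsZ(X^\hbar,B_{1/2}(\bp_j))-\bE[\bsZ(X^\hbar,B_{1/2}(\bp_j))]$ and expand
\[
\var\bigl[\bsZ(X^\hbar,\bT^m)\bigr]=\sum_{j}\var[U_{\hbar,j}]+\sum_{j\neq k}\cov[U_{\hbar,j},U_{\hbar,k}].
\]
Stationarity and Theorem \ref{th: main3}(2) make the diagonal sum $\sim 2^m S^0\hbar^{-m}\vol(B_{1/2})=S^0\hbar^{-m}$. For the off-diagonal terms I would use the Kac--Rice representation
\[
\cov[U_{\hbar,j},U_{\hbar,k}]=\int_{B_{1/2}(\bp_j)}\int_{B_{1/2}(\bp_k)}G^\hbar(\by-\bx)\,d\bx\,d\by,
\]
where $G^\hbar$ is the centered two-point intensity of critical points of $X^\hbar$, a function of $V^\hbar$ and its derivatives of order $\leq 2$. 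The Schwartz decay of $V$ together with (\ref{eq: vve_appr}) forces $G^\hbar$ to be concentrated on the scale $\hbar$; since $B_{1/2}(\bp_j)$ and $B_{1/2}(\bp_k)$ are disjoint, the integrand has effective support only in an $O(\hbar)$-neighborhood of their shared face, yielding each off-diagonal covariance of order $\hbar^{-(m-1)}=o(\hbar^{-m})$. Summing over the $O(1)$ pairs proves (2).

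For part (3), I would apply the Peccati--Tudor joint fourth moment theorem to the rescaled vector $\bsV_\hbar:=(V_{\hbar,j})_{j=1}^{2^m}\in(\widehat{\eX})^{2^m}$, where $V_{\hbar,j}:=(2\hbar)^{m/2}U_{\hbar,j}$. Each coordinate converges marginally to $\eN(0,S^0)$ by Theorem \ref{th: main3}(3), and part (2) shows that the covariance matrix of $\bsV_\hbar$ converges to $S^0\cdot I_{2^m}$. Decomposing each $V_{\hbar,j}$ into Wiener chaos components and applying Peccati--Tudor chaos by chaos, together with the tail estimate on the high-order chaos that is already established in the proof of Theorem \ref{th: main2}, yields joint convergence of $\bsV_\hbar$ to an i.i.d.\ Gaussian vector $(\eta_j)$ with $\eta_j\sim\eN(0,S^0)$. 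The continuous mapping theorem then gives
\[
\hbar^{m/2}\bigl(\bsZ(X^\hbar,\bT^m)-\bE[\bsZ(X^\hbar,\bT^m)]\bigr)=2^{-m/2}\sum_{j=1}^{2^m}V_{\hbar,j}\longrightarrow 2^{-m/2}\sum_{j=1}^{2^m}\eta_j\sim\eN(0,S^0),
\]
which is (3). The main obstacle is the rigorous verification of the cross-cube hypotheses of Peccati--Tudor: one must show that the mixed contractions between the fixed-chaos components of $U_{\hbar,j}$ and $U_{\hbar,k}$ with $j\neq k$ vanish in the limit. This should follow from stationarity of $Y^\hbar$ on $\bR^m$ and the Schwartz decay (\ref{eq: vve_appr}) by a routine extension of the single-cube contraction estimates already carried out for Theorem \ref{th: main2}.
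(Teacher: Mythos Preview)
Your approach is workable but differs substantially from the paper's, and it has one genuine soft spot.

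\textbf{What the paper does.} The paper does \emph{not} try to control the off-diagonal covariances between sub-cubes. For the variance it computes $S_q^\hbar=\var\bigl[\eP_q\zeta^\hbar(1/\hbar)\bigr]$ directly: the usual change of variables turns this into an integral over $\whB_{2/\hbar}$, which overshoots the fundamental domain $\whB_{1/\hbar}$. The paper exploits the $(\hbar^{-1}\bZ)^m$-periodicity of $V^\hbar$ to fold $\whB_{2/\hbar}$ back into $\whB_{1/\hbar}$ by an explicit octant decomposition, and then lets $\hbar\to 0$ exactly as in the case $r\le 1/2$. The $2^m$-cube decomposition you propose is used by the paper \emph{only} for the tail estimate, and there a crude Cauchy--Schwarz bound
\[
\bE\bigl[|\eP_{>Q}\zeta^\hbar(1/\hbar)|^2\bigr]\le 2^m\,\bE\bigl[|\eP_{>Q}\zeta^\hbar(1/(2\hbar))|^2\bigr]
\]
suffices, with no off-diagonal analysis at all. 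The CLT then follows by rerunning the scalar contraction argument of Subsection~\ref{ss: 25} directly for $\zeta^\hbar(1/\hbar)$.

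\textbf{Where your argument is thin.} Your boundary estimate for the off-diagonal covariances is morally correct (and the order $\hbar^{-(m-1)}$ is right), but as written it is heuristic: you need to know that the \emph{centered} two-point intensity $G^\hbar$, built from $V^\hbar$ via Kac--Rice, inherits enough decay from $V^\hbar$ to make the ``effective support $O(\hbar)$'' claim precise, and you must account for the fact that on the torus two sub-cubes can be adjacent across several faces (by wrap-around). None of this is hard, but it is more than you have written.

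\textbf{A misidentified obstacle.} For part (3) you flag the cross-cube mixed contractions $g_{q,j}^\hbar\otimes_r g_{q,k}^\hbar$ as the main difficulty. In fact Peccati--Tudor does \emph{not} require these: for each fixed chaos level $q$, marginal convergence of each $\eP_q V_{\hbar,j}$ to a Gaussian (which you already have from Theorem~\ref{th: main3}) together with convergence of the $2^m\times 2^m$ covariance matrix $\bigl(\bE[\eP_q V_{\hbar,j}\,\eP_q V_{\hbar,k}]\bigr)_{j,k}$ is enough for joint normality. What you actually need, and have not addressed, is precisely this chaos-level covariance convergence for $j\ne k$: your boundary argument in part~(2) is for the \emph{total} covariance, not chaos-by-chaos. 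The same boundary reasoning does give it (the integrand $\bE[\rho_q^\hbar(0)\rho_q^\hbar(\bu)]$ is a degree-$q$ polynomial in the correlations $\Gamma^\hbar_{ij}(\bu)$ and decays with $\psi^\hbar$), but you should say so explicitly. Once that is in place, the tail uniformity from (\ref{supNa}) lets you sum over $q$ and conclude. So your route is viable, but the bookkeeping you must do is different from what you anticipated, and it is heavier than the paper's periodicity trick.
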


\begin{remark} (a) In principle, the method of proof  we employ can produce effective  bounds on the  Fortet-Mourier  on the space of probability  measures on the real axis.     For the sake of  clarity we have have decided not to pursue this aspect.

(b) The constant  $\bar{Z}_0$ in  (\ref{main1}) depends only on $m$ and $w$, $\bar{Z}_0=\bar{Z}_0(w,m)$ an represents the  expected density of  critical points per  unit volume of the random function $Y^0$. We set
 \begin{equation}
 I_k(w):=\int_0^\infty w(r) r^k dr.
 \label{eq: Ik}
 \end{equation}
We have (see \cite{Nispec})
 \begin{equation}\label{sdh}
  \begin{split}
 (2\pi)^{m/2} s_m=\frac{2\pi^{\frac{m}{2}}}{ \Gamma(\frac{m}{2})} I_{m-1}(w),\;\;(2\pi)^{m/2} d_m= \frac{2\pi^{\frac{m}{2}}}{m \Gamma(\frac{m}{2})}I_{m+1}(w),\\
  (2\pi)^{m/2} h_m=\frac{1}{3}\int_{\bR^m}x_1^4 w(|x|) dx=\frac{2\pi^{\frac{m}{2}}}{m(m+2) \Gamma(\frac{m}{2})}I_{m+3}(w).
  \end{split}
  \end{equation}
Denote by $\eS_m$    the  Gaussian Orthogonal  Ensemble  of  real symmetric $m\times m$ matrices   $A$  with  independent,  normally distributed  entries   $(a_{ij})_{1\leq i,j\leq m}$ with variances
\[
\bsE[a_{ii}^2]=2,\;\;\bsE[a_{ij}^2]=1,\;\;\forall 1\leq i\neq j\leq m
\]  
As explained in \cite{Niclt}, we have
  \begin{equation}\label{cmw}
\bar{Z}_0(w,m)= \left(\frac{h_m}{2\pi d_m}\right)^{\frac{m}{2}}\bE_{\eS_m}\bigl[\, |\det A|\,\bigr]=\left(\frac{I_{m+1}(w)}{2\pi (m+2) I_{m+3}(w)} \right)^{\frac{m}{2}}\bE_{\eS_m}\bigl[\,|\det A|\,\bigr].
\end{equation}
In \cite[Cor.1.7]{Nispec} we have shown that, as $m\to \infty$, we have
\begin{equation}
 \bar{Z}_0(w,m)\sim \frac{8}{\sqrt{\pi m}}\Gamma\left(\frac{m+3}{2}\right)\left(\frac{2I_{m+3}(w)}{\pi (m+2) I_{m+1}(w)}\right)^{\frac{m}{2}}.
 \label{eq: cw}
 \end{equation}
The asymptotic behavior  of $\bar{Z}_0(w,m)$ as $m\to\infty$   depends  rather dramatically on the size of the tail of the Schwartz function $w$: the heavier the tail, the  faster the growth of  $\bar{Z}_0(w,m)$ as $m\to\infty$.  For example, in \cite[Sec.3]{Nispec} we have shown  the following.

\begin{itemize}

\item  If $w(t)\sim  \exp(-\log t)\log(\log t)\,)$ as $t\to \infty$, then
\[
\log \bar{Z_0}(w,m)\sim \frac{m}{2}e^{m+2}(e^2-1)\;\;\mbox{as $m\to \infty$}.
\]
\item If  $w(t)\sim \exp\bigl(-(\log t)^{\frac{p}{p-1}}\;\bigr)$ as $t\to\infty$, $p>1$, then, for some explicit constant $C_p>0$, we have
\[
\log \bar{Z}_0(w,m)\sim C_p m^{p},\;\; \mbox{as $m\to \infty$}
\]
\item If $w(t)\sim e^{-t^2}$ as $t\to \infty$, then
\[
 \log \bar{Z}_0(w,m)\sim \sim \frac{m}{2}\log m, \;\; \mbox{as $m\to \infty$}.
 \]
 \end{itemize}
 
 \smallskip
 
 \noindent (c) The constant  $S_0$ in Theorem \ref{th: main2} seems very difficult to estimate.   As the proof of Theorem \ref{th: main2} will show, the constant $S_0$    is a sum of  a series with nonnegative terms
 \[
 S_0=\sum_{q\geq 1}\bar{S}^0_q,
 \]
 where the terms  $\bar{S}^0_q$ are defined explicitly in (\ref{rnlim}). In \cite{Niclt} we have proved that $S_0>0$ by showing  that 
 \[
 \bar{S}^0_2=\int_{\bR^m} \bigl|\,P(\xi_1,\dotsc,\xi_m) w(|\boxi|)\,\bigr|^2 d\boxi,
 \]
 where $P(\xi_1,\dotsc,\xi_m)$ is a certain  explicit, nonzero,  but rather complicated polynomial. The constant $ \bar{S}^0_2$ depends on  $w$ and $m$. In \cite[Appendix A]{Niclt} we described   methods of producing asymptotic estimates for $\bar{S}^0_2(w,m)$ as $m\to\infty$, but the results are not too pretty. \qed
\end{remark}

\subsection{Outline of proofs} \label{ss: 14}  The strategy of proof  is inspired from \cite{AzLe, EL}.  As explained  earlier, the  Gaussian random variables  $Y^\hbar(\bx)$, $\bx\in\bR^m$, $\hbar\geq 0$,  are defined on  the same probability space $(\Omega, \eF, \bsP)$ and inside the same Gaussian Hilbert space $\eX$.  

Using the Kac-Rice formula  and the asymptotic estimates in \cite{Nifluct} we  show in  Subsection \ref{ss: 21}  that, for any $\hbar\geq 0$ sufficiently small, and any box $B$, the random variables $\bsZ^\hbar(B)$  belongs to the Wiener chaos $\widehat{\eX}$  and we  describe  its  Wiener chaos decomposition.  The key result  behind this fact is Proposition \ref{prop: key} whose rather involved technical proof  is deferred to Appendix \ref{s: key}.   The Wiener chaos decomposition of   $\bsZ^\hbar(B)$  leads  immediately  to (\ref{main1}) and (\ref{main2}).

To prove that the random variables $\zeta^\hbar(2N_\hbar)$ and $\zeta^0(2N_\hbar)$ converge in law to  normal  random variable $\bar{\zeta}^0(\infty)$ and respectively $\zeta^0(\infty)$  we imitate the strategy in \cite{EL, Niclt} based   on the    very general Breuer-Major type  central limit  theorem \cite[Thm. 6.3.1]{NP}, \cite{NP2009, NPP, NuaPe, PeTu}.      

The  case  of the variables  $\zeta^0(N_\hbar)$  is  covered in \cite{Niclt} where we have shown that there exists $S^0>0$ and a normal random bariable $\zeta^0(\infty)\sim\eN(0,S^0)$  such that, as $N\to\infty$, the random variable $\zeta^0(N)$ converges in law to $\zeta^0(\infty)$.  

The case    $\zeta^\hbar(2N_\hbar)$  is conceptually similar, but the  extra dependence on $\hbar$ adds an extra layer of difficulty.  Here are the details.  

Denote by $\zeta^\hbar_q$ the $q$-th chaos component   of $\zeta^\hbar_q(2N_\hbar)\in\widehat{\eX}$.   According to \cite[Thm.6.3.1]{NP},  to prove  that $\zeta^\hbar(2N_\hbar)$ converges in law to a  normal random variable  $\bar{\zeta}^0(\infty)$ it suffices to prove the following.

\begin{enumerate}

\item  For every $q\in\bN$ there exists $\bar{S}^0_{q}\geq 0$ such that
\[
\lim_{\hbar\to 0} \var\bigl[\,\zeta^\hbar_{q}\bigr]=\bar{S}^0_{q}.
\]
\item  Exists $\hbar_0>0$ such that
\[
\lim_{Q\to\infty} \sup_{0\leq \hbar\leq \hbar_0} \sum_{q\geq Q} \var\bigl[\,\zeta^\hbar_{q} \,\bigr]=0.
\]
\item  For   each $q\in\bN$, the random variables $\zeta^\hbar_{q}(2N_\hbar)$ converge in law   to a normal random variable, necessarily of variance $\bar{S}^0_{q}$.
\end{enumerate}

We prove (i) and (ii) in   Subsection \ref{ss: 24}; see  (\ref{vq2}) and respectively Lemma \ref{lemma: VQ}.

To prove   (iii) we rely on the fourth-moment theorem \cite[Thm. 5.2.7]{NP}, \cite{NuaPe}. The details are identical to the ones employed in the proof of  \cite[Prop. 2.4]{EL}. The variance  of the  limiting normal random variable $\bar{\zeta}^0(\infty)$ is
\[
\var\bigl[\,\bar{\zeta}_0(\infty)\,\bigr]=\sum_{q\geq 1} \bar{S}^0_{q}  <\infty.
\]
The explicit description of the components $\bar{S}^0_q$   will then show that $S^0=\bar{S}^0$.

The proof of Theorem \ref{th: main3} is, up to a suitable rescaling, identical to the proof of Theorem \ref{th: main2}. We explain this  in more detail in Subsection \ref{ss: cor}. The proof of Theorem \ref{th: main4} requires a  clever modification of the arguments in  the proof of Theorem \ref{th: main3}   because  in this case  the condition(\ref{ddag}) is not satisfied. The details are contained in Subsection \ref{ss: main4}

\subsection{Related results} Central limit  theorems concerning  crossing counts of random functions go back a while, e.g. T. Malevich \cite{Mal69} (1969) and J. Cuzik \cite{Cuz76} (1976).

The usage of  Wiener chaos decompositions  and of  Breuer-Major type results  in proving  such  central limit  theorems is more recent, late 80s early 90s. We want to mention here the pioneering  contributions of  Chambers and Slud \cite{ChaSl},   Slud \cite{Slud91, Slud94},  Kratz and Le\'{o}n \cite{KL1997},  Sodin and Tsirelson \cite{ST}.

 This topic  was further elaborated by  Kratz  and Le\'{o}n  in \cite{KL2001}  where they  also proved a central limit theorem  concerning the length of the zero set of a random function of two variables. We refer to \cite{AzWs}  for  particularly nice discussion of these  developments.   
  
  Aza\"{i}s and Le\'{o}n \cite{AzLe} used  the technique of  Wiener  chaos decomposition to give a  shorter and more conceptual proof   to a central limit theorem   due to Granville and Wigman \cite{GW}  concerning the number  of zeros of random trigonometric polynomials of large degree. This technique was then successfully   used by  Aza\"{i}s, Dalmao and Le\'{o}n \cite{AzDaLe}  to prove a CLT concerning the number of zeros of  Gaussian   even trigonometric polynomials and by Dalmao  in \cite{Da} to  prove a CLT concerning the number of zeros of   one-variable  polynomials   in the Kostlan-Shub-Smale probabilistic ensemble.  Adler and Naitzat \cite{AN} used  Hermite  decompositions  to  prove a CLT  concerning Euler integrals of random functions.
  
  The recent results \cite{ADLNP} suggests that the central  limit results proved in  this paper may have a universal character in the sense  that  the random Fourier series (\ref{uve}) need not be Gaussian.

\section{Proofs of the main results}
\setcounter{equation}{0}

\subsection{Hermite decomposition of the number of critical points}\label{ss: 21}

For every $\hbar\geq 0$, $\bv\in\bR^m$ and $B\in\eB(\bR^m)$ we denote by $\bsZ^\hbar(\bv, B)$  the number of solutions $\bx$ of the equation
\[
\nabla Y^\hbar(\bx)=\bv,\;\;\bx\in B.
\]
For $\ve>0$ we define
\[
\delta_\ve:\bR^m\to\bR,\;\; \delta_\ve(\bv)= \ve^{-m} \bone_{\whB_{\ve/2}(0)}(\bv).
\]
Note that $\delta_\ve$ is supported  on the cube of size $\ve$ centered at the origin and its total integral is $1$. As $\ve\searrow 0$, the function $\delta_\ve$ converges in the sense of distributions to  the Dirac $\delta_0$.  We set
\[
\bsZ^\hbar_\ve(\bv, B)= \int_B \bigl|\,\det \nabla^2 Y^\hbar(\bx)\bigr|\;\delta_\ve\bigl(\,\nabla Y^\hbar(\bx)-\bv\,\bigr)d\bx,\;\;\bsZ^\hbar(B):=\bsZ^\hbar(\bv, B)\bigr|_{\bv=0}.
\]
We define a \emph{box}   in $\bR^m$  to be a set $B\subset\bR^m$ of the form
\[
B=[a_1,b_1]\times \cdots\times [a_m,b_m],\;\;a_1<b_1,\dotsc, a_m<b_m.
\]
If $B\subset \bR^m$ is a box   \cite[Thm.11.3.1]{AT}, we deduce that $X$  is a.s. a Morse function on $T$ and in particular, for any $\bv\in\bR^m$, the equation $\nabla X^\hbar(\bx)=\bv$  almost surely has no solutions $\bx\in\pa B$.

The proof of the Kac-Rice formula \cite[Thm. 11.2.3]{AT} shows  that $\bsZ^\hbar(\bv, B)\in L^1(\Omega)$ and  
\[
\bsZ^\hbar_\ve(\bv, B)\to \bsZ^\hbar(\bv, B)\;\;\mbox{a.s. as $\ve \to 0$}.
\]

\begin{proposition}\label{prop: key} There exists $\hbar_0>0$, sufficiently small,   and $C_0>0$ such that, for any $\hbar\in [0,\hbar_0)$ and any box $B\subset \whB_{2}(0)$ the following hold.   

\begin{enumerate} 

\item For any $\bv\in\bR^m$, $\bsZ^\hbar(\bv, B)\in L^2(\Omega,\widehat{\eF},\bP)$.

\item  The function
\[
\bR^m\ni \bv\mapsto \bE\bigl[\, \bsZ^\hbar(\bv, B)^2\,\bigr]\in\bR
\]
is continuous.

\item  For any $\bv\in\bR^m$
\[
\lim_{\ve\to 0} \bsZ^\hbar_\ve(\bv, B)= \bsZ^\hbar(\bv, B)\;\;\mbox{in $L^2(\Omega)$}.
\]

\item The function
\[
[0,\hbar_0]\ni\hbar\mapsto \bsZ^\hbar(B)\in L^2(\Omega,\widehat{\eF},\bP)
\] 
is continuous.
\end{enumerate}
\end{proposition}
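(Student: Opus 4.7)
The plan is to handle (i)--(iv) in a single framework based on the second-moment Kac-Rice identity, using crucially the approximation (\ref{eq: vve_appr}), which ensures that on the compact box $\whB_2(0) \supset B$ the covariance $\eK^\hbar$ and all its derivatives converge to $\eK^0$ uniformly in $\hbar \in [0,\hbar_0]$. This will deliver the uniform non-degeneracy estimates on which everything rests. Concretely, for each $\bx \in \whB_2(0)$ the Gaussian vector $\nabla Y^\hbar(\bx)$ has covariance matrix close (in every $C^k$ sense, uniformly in $\bx$) to that of $\nabla Y^0(\bx)$, which is non-degenerate; similarly the pair $(\nabla Y^\hbar(\bx_1), \nabla Y^\hbar(\bx_2))$ becomes degenerate only in a controlled way as $\bx_1 \to \bx_2$. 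All moments of $\nabla^2 Y^\hbar(\bx)$ are likewise uniformly bounded.

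For (iii) I would first invoke the classical a.s.\ convergence $\bsZ^\hbar_\ve(\bv, B) \to \bsZ^\hbar(\bv, B)$, which is the argument underlying \cite[Thm.\ 11.2.3]{AT} and uses the a.s.\ Morse property together with the fact that $\nabla Y^\hbar \neq \bv$ on $\pa B$ a.s.. To upgrade this to $L^2$-convergence, and simultaneously establish (i), I would compute $\bE\bigl[\bsZ^\hbar_\ve(\bv, B)^2\bigr]$ as a double integral over $B \times B$ of the mollified joint density of $\bigl(\nabla Y^\hbar(\bx_1), \nabla Y^\hbar(\bx_2)\bigr)$ against $|\det \nabla^2 Y^\hbar(\bx_1) \det \nabla^2 Y^\hbar(\bx_2)|$, and show this is uniformly bounded in $\ve$. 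The heart of the matter is the near-diagonal estimate: expanding $\nabla Y^\hbar(\bx_2) - \nabla Y^\hbar(\bx_1) \approx \nabla^2 Y^\hbar(\bx_1)(\bx_2 - \bx_1)$ shows that conditioning on both gradients equalling $\bv$ forces constraints on $\nabla^2 Y^\hbar$ whose Jacobian exactly cancels the conditional Gaussian density blow-up, yielding a uniformly integrable two-point intensity $\rho_2^\hbar(\bx_1, \bx_2; \bv)$. Passing to the limit $\ve \to 0$ via dominated convergence then gives (i) together with the Kac-Rice representation
\[
\bE\bigl[\bsZ^\hbar(\bv,B)^2\bigr] = \bE\bigl[\bsZ^\hbar(\bv,B)\bigr] + \int_{B \times B} \rho_2^\hbar(\bx_1,\bx_2;\bv)\, d\bx_1 d\bx_2.
\]
Membership in $\widehat{\eF}$ is automatic since $\bsZ^\hbar_\ve$ is measurable with respect to the field values and $L^2$-limits preserve $\widehat{\eF}$-measurability. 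Statement (ii) follows from the same representation, since $\rho_2^\hbar(\bx_1,\bx_2;\bv)$ is continuous in $\bv$ (through the Gaussian density at $(\bv,\bv)$ and the conditional expectation of $|\det \nabla^2 Y^\hbar(\bx_1) \det \nabla^2 Y^\hbar(\bx_2)|$) and is dominated by the same near-diagonal bound.

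For (iv), the strategy is to use (iii) to reduce continuity of $\hbar \mapsto \bsZ^\hbar(B)$ to continuity of the approximants $\hbar \mapsto \bsZ^\hbar_\ve(B)$ for each fixed $\ve > 0$, exploiting that the second-moment bound behind (iii) is uniform in $\hbar$. For fixed $\ve$, $\bsZ^\hbar_\ve(B)$ is a fixed continuous functional of the finite-order jet process $\bx \mapsto (\nabla Y^\hbar(\bx), \nabla^2 Y^\hbar(\bx))$ restricted to $B$; since this process depends continuously on $\hbar$ in $L^2(\Omega; C^0(B))$ by (\ref{eq: vve_appr}) and the Gaussian Hilbert space realization in $\eX$, the continuity of $\bsZ^\hbar_\ve(B)$ in $L^2$ follows after removing the indicator discontinuity of $\delta_\ve$, for instance by a standard smoothing-approximation argument and uniform integrability of $|\det \nabla^2 Y^\hbar|$. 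The main obstacle throughout is the uniform-in-$\hbar$ near-diagonal estimate for $\rho_2^\hbar$, since the constants in this estimate must be controlled as $\hbar \to 0$ even though $Y^\hbar$ is merely an approximation to $Y^0$; this is precisely where the strong $C^k$ convergence $V^\hbar \to V$ on $\whB_{1/\hbar}$ from (\ref{eq: vve_appr}) plays its decisive role.
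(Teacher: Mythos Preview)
Your overall framework matches the paper's: Kac--Rice second-moment formula, with the near-diagonal analysis as the crux, and dominated convergence to pass limits. However, your description of the near-diagonal mechanism is imprecise and, as stated, incorrect.

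You write that conditioning on both gradients forces constraints on $\nabla^2 Y^\hbar$ ``whose Jacobian exactly cancels the conditional Gaussian density blow-up.'' This is not what happens. The paper shows (Step~1, via \cite[Lemma~3.5]{Nifluct}) that the joint density satisfies $p^\hbar_{0,\by}(\bv,\bv) = O(|\by|^{-m})$ near $\by = 0$, and (Step~2) that the conditional expectation $g^\hbar(\bv,\by) := \bE_{\by,\bv}\bigl[|\det \nabla^2 Y^\hbar(0) \det \nabla^2 Y^\hbar(\by)|\bigr]$ satisfies $g^\hbar(\bv,\by) = O(|\by|^2)$. The product is therefore $O(|\by|^{2-m})$, which for $m \geq 3$ is still singular---but it is locally integrable in $\bR^m$ since $\int_{|\by|<r} |\by|^{2-m}\, d\by < \infty$. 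There is no exact cancellation; one must extract \emph{just enough} decay from the conditional determinant to beat the density blow-up in $L^1$.

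Obtaining that $|\by|^2$ decay is the real work, and your proposal does not indicate how to do it. The paper's argument uses three concrete ingredients: (a) Hadamard's inequality applied to $(\nabla^2 Y^\hbar(0))^2$ with an orthonormal basis whose first vector is $\by/|\by|$, isolating the factor $\|\nabla^2 Y^\hbar(0)\by\|^2/|\by|^2 = \sum_j Y^\hbar_{1j}(0)^2$; (b) Taylor's formula with integral remainder applied to $t \mapsto Y^\hbar_j(t\by)$, which under the conditioning $\nabla Y^\hbar(0) = \nabla Y^\hbar(\by) = \bv$ yields $Y^\hbar_{1j}(0) = -|\by|\int_0^1 Y^\hbar_{11j}(t\by)(1-t)\,dt$, forcing $Y^\hbar_{1j}(0) = O(|\by|)$ after conditioning; and (c) a careful regression-formula analysis (Lemma~\ref{lemma: gauss_est}) of the conditional moments of $Y^\hbar_{11j}(t\by)$ and $Y^\hbar_{ij}(0)$, which requires controlling the inverse covariance matrix $S_\hbar^{-1}$ near its degeneration. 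This last point is where the non-isotropy of $Y^\hbar$ for $\hbar > 0$ makes the argument genuinely more delicate than in \cite{EL}, and where the uniform $C^k$ convergence $V^\hbar \to V$ from (\ref{eq: vve_appr}) enters decisively.

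Once Steps~1 and~2 are in hand, the paper dispatches (ii)--(iv) quickly: (ii) and (iii) follow the arguments of Points~2 and~3 in the proof of \cite[Prop.~1.1]{EL}, and (iv) follows directly from dominated convergence using the uniform-in-$\hbar$ bounds already established. Your proposed route to (iv) via continuity of the approximants $\bsZ^\hbar_\ve$ and an additional smoothing of $\delta_\ve$ is more elaborate than necessary.
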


We defer  the proof of Proposition \ref{prop: key} to the Appendix \ref{s: key}.  The case $\hbar=0$ of this proposition is discussed in \cite[Prop.1.1]{EL}. That proof    uses in an essential fashion the isotropy of the  random function $Y^0$.  The random functions  $Y^\hbar$, $\hbar\neq 0$, are not  isotropic,  but  they are ``nearly'' so for $\hbar$ small.  

Since for any Borel set $B\subset \bR^m$, and any $\ve>0$  the  random variables $\bsZ_\ve^\hbar(\bv, B)$ belong to the Wiener chaos $\widehat{\eX}$  defined in (\ref{whX}), we deduce from  Proposition \ref{prop: key}(iii)   that,  for  any $\hbar\leq \hbar_0$, and any  box $B\subset \whB_{1/\hbar}$,  the  number of critical points $\bsZ^\hbar(B)$ belongs to the Wiener chaos $\widehat{\eX}$.

Fix $\hbar_0$ as in Proposition \ref{prop: key}. Consider the random  field
\[
\tbY^\hbar(\bx):=\nabla Y^\hbar(\bx) \oplus  \nabla^2 Y^\hbar(\bx),\;\;\bx\in\bR^m,\;\;\hbar\in[0,\hbar_0].
\]
of dimension
\[
D=m+\nu(m),\;\;\nu(m):=\frac{m(m+2)}{2}.
\]
Note that 
\[
\bE\bigl[\, Y^\hbar_i(\bx) Y^\hbar_{j,k}(\bx)\,\bigl]= -V^\hbar_{i,j,k}(0)=0,
\]
since $V^\hbar(\bx)$ is an even  function.  Hence, the two components of $\tbY^\hbar$ are independent.   We  can  find   invertible matrices $\Lambda_1^\hbar$ and  $\Lambda^\hbar_2$ of dimensions $m\times m$ and respectively $\nu(m)\times \nu(m)$, that depend continuously on $\hbar\in [0,\hbar_0]$ such that the probability distributions of the random vectors 
\[
U^\hbar(\bx)= (\Lambda_1^\hbar)^{-1}\nabla Y^\hbar(\bx)\in\bR^m,\;\;A^\hbar(\bx):=(\Lambda_2^\hbar)^{-1} \nabla^2 Y^\hbar(\bx)\in\bR^{\nu(m)}
\]
are the canonical Gaussian measures on the Euclidean spaces $\bR^m$ and $\bR^{\nu(m)}$ respectively.   More precisely,   we can choose as $\Lambda_i^\hbar$, $i=1,1$, the square roots of the covariance matrices of $\nabla^i Y^\hbar(\bx)$.

Consider the functions
\[
f^\hbar:\bR^{\nu(m)}  \to\bR,\;\; f^\hbar(A)=\bigl|\det \Lambda_2^\hbar A\,\bigr|,
\]
\[
G^\hbar_\ve: \bR^m\times \bR^{\nu(m)}\to\bR,\;\; G_\ve(U,A)= \delta_\ve(\Lambda_1^\hbar U) f_\hbar(A).
\]
\emph{Fix a box $B$, independent of $\hbar$}.   Proposition \ref{prop: key} shows that, for $\hbar$ sufficiently small, we have
\[
\bsZ^\hbar(B)= \lim_{\ve \to 0} \int_B G^\hbar_\ve(U(\bx), A(\bx) ).
\]
Recall that an orthogonal basis of $L^2\bigl(\, \bR,\bgamma(dx)\,\bigr)$ is given   by  the Hermite polynomials, \cite[Ex. 3.18]{Jan},  \cite[V.1.3]{Mal},
\begin{equation}\label{hermn}
H_n(x) :=(-1)^n e^{\frac{x^2}{2}} \frac{d^n}{dx^n}\Bigl(\, e^{-\frac{x^2}{2}}\,\Bigr)= n!\sum_{r=0}^{\lfloor\frac{n}{2}\rfloor} \frac{(-1)^r}{2^r r!(n-2r)!} x^{n-2r}.
\end{equation}
In particular
\begin{equation}\label{hn0}
H_n(0)=\begin{cases}
0, & n\equiv 1\bmod 2,\\
(-1)^r\frac{(2r)!}{2^rr!}, & n=2r.
\end{cases}
\end{equation}
For every multi-index  $\alpha=(\alpha_1, \alpha_2,\dotsc )\in\bN_0^{\bN}$ such that    all but finitely many $\alpha_k$-s are nonzero, and any
\[
\underline{x}=(x_1,x_2,\dotsc)\in\bR^{\bN}
\]
 we set
\[
|\alpha|:=\sum_k\alpha_k,\;\;\alpha!:=\prod_k \alpha_k!,\;\;H_\alpha(\underline{x}):=\prod_k H_{\alpha_k}(x_k).
\]
Following \cite[Eq.(5)]{EL}   we define for every $\alpha\in\bN_0^m$ the quantity
\begin{equation}\label{dalph}
d_\alpha:=\frac{1}{\alpha!}(2\pi)^{-\frac{m}{2}}  H_\alpha(0).
\end{equation}
The function $f^\hbar: \bR^{\bnu(m)}\to\bR$   has a  $L^2(\bR^{\nu(m)},\bGamma)$-orthogonal decomposition
\[
f^\hbar(A)= \sum_{n\geq 0} f_n^\hbar(A),
\]
where 
\begin{equation}\label{fn}
f_n^\hbar(A)=\sum_{\substack{\beta\in\bN_0^{\nu(m)},\\ |\beta|=n}} f_\beta^\hbar H_\beta(A),\;\;f_\beta^\hbar=\frac{1}{\beta!}\int_{\bR^{\nu(m)}} f^\hbar(A)  H_\beta(A) \bGamma(dA).
\end{equation}
Note that
\begin{equation}\label{f0}
f_0^\hbar=\bE\bigl[\,|\det \nabla^2 Y^\hbar(0)|\,\bigr].
\end{equation}
The function $\delta_\ve(U)$ has an $L^2(\bR^m, \bGamma)$-orthogonal decomposition
\[
\delta_\ve(U)= \sum_{\alpha\in\bN_0^m}  d^\hbar_{\alpha,\ve} H_\alpha(U),
\]
where
\[
d^\hbar_{\alpha,\ve}=\frac{1}{\alpha!}\int_{\bR} \delta_\ve(\Lambda_1^\hbar U) H_\alpha(U) \bGamma( dU).
\]
Note that
\[
\lim_{\ve\to 0} \int_{\bR} \delta_\ve(\Lambda_1^\hbar U) H_\alpha(U) \bGamma( dU) = \frac{1}{\det \Lambda_1^\hbar}  H_\alpha(0),
\]
so that
\begin{equation}\label{dhve}
\lim_{\ve\to 0} d^\hbar_{\alpha,\ve}=  \frac{1}{\det \Lambda_1^\hbar} d_\alpha,
\end{equation}
uniformly  for $\hbar\in [0,\hbar_0]$. We set
\[
\omega_\hbar:= \frac{1}{\det \Lambda_1^\hbar}.
\]
\begin{remark}\label{rem: lam}  The  matrix $\Lambda^\hbar_1$ is the square root of the covariance matrix  of the random vector $\nabla Y^\hbar(0)$, i.e.,
\[
\Lambda_1^\hbar=\sqrt{-\nabla^2 V^\hbar(0)}.
\]
The function $V=V^{\hbar=0}$ is radially symmetric  and thus
\[
\nabla^2 V(0)= -\lambda^2 \one_m,
\]
for some $\lambda>0$. Hence
\begin{equation}\label{lam}
\Lambda_1^0=\lambda\one_m,\;\;\omega_0=\lim_{\hbar\to 0} \omega_\hbar=\lambda^{-m}=\frac{1}{ \sqrt{\det (-\nabla^2V(0))}}.\end{equation}
\qed
\end{remark}
If we set
\[
\eI_m:=\bN_0^m\times \bN_0^{\nu(m)}
\]
Then
\begin{equation}\label{zqve}
\bsZ^\hbar_\ve(B)=\sum_{q=0}^\infty \int_B \rho^\hbar_{q,\ve}(\bx ) d\bx,
\end{equation}
where 
\[
\rho^\hbar_{q,\ve}(\bx )=\sum_{\substack{(\alpha,\beta)\in\eI_m,\\ |\alpha|+|\beta|=q}} d^\hbar_{\alpha,\ve}f^\hbar_\beta H_\alpha(U(\bx)) H_\beta(A(\bx)).
\]
If we let $\ve\to 0$ in (\ref{zqve}) and  use Proposition \ref{prop: key}(iii) and  (\ref{dhve}) we deduce
\begin{subequations}
\begin{equation}\label{za}
Z^\hbar(B)=\sum_{q\geq 0} Z_q^\hbar(B),\;\;Z^\hbar_q(B)=\int_B\rho^\hbar_q(\bx) d bx,
\end{equation}
\begin{equation}\label{zb}
\rho_q^\hbar(\bx) =\sum_{\substack{(\alpha,\beta)\in\eI_m,\\ |\alpha|+|\beta|=q}} \omega_\hbar d_\alpha f^\hbar_\beta H_\alpha(U(\bx)) H_\beta(A(\bx)).
\end{equation}
\end{subequations}
To proceed further  we need to use   some basic Gaussian estimates.

\subsection{A technical interlude}  Let $\bsV$ be a real Euclidean space  of dimension $N$.  We denote by $\eA(\bsV)$ the space of symmetric positive semidefinite operators $A:\bsV\to\bsV$.  For $A\in \eA(\bsV)$ we denote  by $\bgamma_A$ the centered   Gaussian measure on $\bsV$ with  covariance form $A$. Thus
\[
\bgamma_{\one}(d\bv)=\frac{1}{(2\pi)^{\frac{N}{2}}}e^{-\frac{1}{2}|\bv|^2} d\bv,
\]
and $\bgamma_A$ is the push forward of $\bgamma_\one$ via the linear map $\sqrt{A}$,
\begin{equation}
\bgamma_A=(\sqrt{A})_* \bgamma_{\one}.
\label{eq: push}
\end{equation}
For any  measurable $f:\bsV\to\bR$ with at most polynomial growth  we set
\[
\bsE_A(f)=\int_{\bsV} f(\bv) \bgamma_A(d\bv).
\]
\begin{proposition} Let $f:\bsV\to \bR$ be a locally  Lipschitz function which is positively homogeneous of degree $\alpha\geq 1$.   Denote by $L_f$ the  Lipschitz constant of the restriction of  $f$ to the unit ball of $\bsV$.  There exists a constant $C>0$ which depends only on $N$ and $\alpha$  such that, for any $\Lambda>0$  and any $A,B\in \eA(\bsV)$ such that $\Vert A\Vert,\Vert B\Vert\leq \Lambda$ we have
\begin{equation}
\bigl|\, \bsE_A(f)-\bsE_B(f)\,\bigr|\leq  CL_f \Lambda^{\frac{\alpha-1}{2}} \Vert A- B\Vert^{\frac{1}{2}}.
\label{eq: holder}
\end{equation}
\label{prop: diff-gauss}
\end{proposition}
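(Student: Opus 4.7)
My plan is to reduce both Gaussian integrals to a single integral against the standard Gaussian $\bgamma_\one$, via the push-forward identity \eqref{eq: push}. A change of variables $\bv\mto \sqrt{A}\,\bv$ (resp.\ $\sqrt{B}\,\bv$) yields
\[
\bsE_A(f)-\bsE_B(f) = \int_\bsV \bigl(\,f(\sqrt{A}\,\bv) - f(\sqrt{B}\,\bv)\,\bigr)\,\bgamma_\one(d\bv).
\]
With this in hand, the proof reduces to a pointwise estimate on the integrand together with a standard Gaussian moment bound.

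For the pointwise estimate I would first use homogeneity to upgrade the Lipschitz hypothesis on the unit ball to a scale-dependent bound $|f(\bu)-f(\bv)| \leq L_f R^{\alpha-1}|\bu-\bv|$ whenever $|\bu|,|\bv|\leq R$. This is a one-line rescaling: writing $\bu = R\bu'$, $\bv = R\bv'$ with $\bu',\bv'$ in the unit ball, $\alpha$-homogeneity gives $f(\bu)-f(\bv) = R^\alpha(f(\bu')-f(\bv'))$, and the hypothesis $\alpha\geq 1$ ensures that the resulting factor $R^{\alpha-1}$ does not blow up near the origin. Since $\Vert\sqrt{A}\Vert,\Vert\sqrt{B}\Vert\leq \sqrt{\Lambda}$, applying this at $R=\sqrt{\Lambda}\,|\bv|$ gives
\[
|f(\sqrt{A}\,\bv) - f(\sqrt{B}\,\bv)| \leq L_f\,\Lambda^{(\alpha-1)/2}\,\Vert\sqrt{A} - \sqrt{B}\Vert\,|\bv|^\alpha.
\]

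The final ingredient is the operator H\"older-$1/2$ estimate $\Vert\sqrt{A} - \sqrt{B}\Vert \leq C_N\,\Vert A-B\Vert^{1/2}$ for positive semidefinite operators on $\bsV$; in finite dimensions this follows from the scalar inequality $|\sqrt{a}-\sqrt{b}|\leq \sqrt{|a-b|}$ promoted through functional calculus. Plugging this into the pointwise bound and integrating produces a factor $M_{N,\alpha}:=\int_\bsV|\bv|^\alpha\,\bgamma_\one(d\bv)$, a finite constant depending only on $N$ and $\alpha$, which together with $C_N$ is absorbed into the universal constant $C$ of \eqref{eq: holder}.

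The single delicate step is the operator H\"older continuity of the square root; everything else is routine bookkeeping. In particular, the dependence of the final constant $C$ on $N$ enters only through the matrix H\"older constant $C_N$ and the Gaussian moment $M_{N,\alpha}$, while its dependence on $\alpha$ enters only through $M_{N,\alpha}$.
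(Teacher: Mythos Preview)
Your proposal is correct and follows essentially the same route as the paper's proof: both use the push-forward identity \eqref{eq: push} to reduce to an integral against $\bgamma_\one$, exploit the $\alpha$-homogeneity of $f$ to leverage the Lipschitz constant on the unit ball, and finish with the operator H\"older-$\frac{1}{2}$ continuity of the square root together with a Gaussian moment bound. The only cosmetic difference is that the paper first rescales $A,B$ by $\Lambda^{-1}$ (using $\bsE_{tA}(f)=t^{\alpha/2}\bsE_A(f)$) to reduce to the case $\Lambda=1$, whereas you carry the factor $\Lambda^{(\alpha-1)/2}$ through the pointwise estimate via the scaled Lipschitz bound at radius $R=\sqrt{\Lambda}\,|\bv|$; the two computations are equivalent.
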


\begin{proof}    We present the very elegant argument we learned from George Lowther on \href{http://mathoverflow.net/questions/130496/continuous-dependence-of-the-expectation-of-a-r-v-on-the-probability-measure}{MathOverflow}. In the sequel we will use the same  letter $C$ to denote various constant that depend only on $\alpha$ and $N$.

First of all let us observe that  (\ref{eq: push}) implies that
\[
\bsE_A(f)=\int_{\bsV} f(\sqrt{A}\bv) \bgamma_{\one}(d\bv).
\]
We deduce that for any $t>0$ we have
\[
\bsE_{tA}(f) =\int_{\bsV} f(\sqrt{tA}\bv) \bgamma_{\one}(d\bv)=t^{\frac{\alpha}{2}}\int_{\bsV} f(\sqrt{A}\bv) \bgamma_{\one}(d\bv)=t^{\frac{\alpha}{2}}\bsE_A(f),
\]
and thus it suffices to prove (\ref{eq: holder}) in the special case $\Lambda=1$, i.e. $\Vert A\Vert,\Vert B\Vert\leq 1$.    We have
\[
\bigl|\, \bsE_A(f)-\bsE_B(f)\,\bigr|\leq \int_{\bsV} \bigl|\, f(\sqrt{A}\bv)-f(\sqrt{B}\bv)\,\bigr| \bgamma_{\one}(d\bv)
\]
\[
=\int_{\bsV}|\bv|^\alpha \Bigl|\, f\Bigl(\sqrt{A}\frac{1}{|\bv|}\bv\Bigr)-f\Bigl(\sqrt{B}\frac{1}{|\bv|}\bv\Bigr)\,\Bigr| \bgamma_{\one}(d\bv)
\]
\[
\leq L_f \int_{\bsV}|\bv|^\alpha \Bigl|\,\sqrt{A}\frac{1}{|\bv|}\bv-\sqrt{B}\frac{1}{|\bv|}\bv\,\Bigr|\bgamma_{\one}(d\bv)
\]
\[
\leq L_f\Vert\sqrt{A}-\sqrt{B}\Vert \int_{\bsV}|\bv|^\alpha \bgamma_{\one}(d\bv)\leq CL_f\Vert A-B\Vert^{\frac{1}{2}}.
\]
\end{proof}

\subsection{Proof of Theorem \ref{th: main1} }

Note that
\[
\bE\bigl[\,\bsZ^\hbar(B)\,\bigr]= \bE\bigl[\, Z_0^\hbar(B)\,\bigr]= |B| \omega_\hbar f^\hbar_0 d(0)
\]
(use (\ref{dalph}) and (\ref{f0}) )
\[
=(2\pi)^{-m/2}|B|\omega_\hbar\bE\bigl[\,|\det \nabla^2 Y^\hbar(0)|\,\bigr].
\]
Using (\ref{eq: cov-asy0}) and Remark \ref{rem: lam} we deduce that that
\[
\omega_\hbar-\omega_0=O(\hbar^\infty).
\]
From  (\ref{eq: cov-asy0}) we also deduce that
\[
\Vert \nabla^2 V^\hbar(0)-\nabla^2 V(0)\Vert =O(\hbar^\infty.
\]
Invoking Proposition \ref{prop: diff-gauss} we deduce that
\[
\bE\bigl[\,|\det \nabla^2 Y^\hbar(0)|\,\bigr]= \bE\bigl[\,|\det \nabla^2 Y^0(0)|\,\bigr] +O(\hbar^\infty).
\]
Hence
\begin{equation}\label{main1a}
\begin{split}
\bE\bigl[\,\bsZ^\hbar(B)\,\bigr]= \bE\bigl[\,\bsZ^0(B)\,\bigr] +O(\hbar^\infty),\\  \bE\bigl[\,\bsZ^0(B)\,\bigr] =(2\pi)^{-m/2}\omega_0\,|B|\,\bE\bigl[\,|\det \nabla^2 Y^0(0)|\,\bigr].
\end{split}
\end{equation}
Using (\ref{lam}) in the above  equality we obtain  (\ref{main1}).

Let $N_\hbar$ satisfy (\ref{dag}). Recall that $\bA^m$ denotes the affine lattice
\begin{equation}\label{ash}
\bA^m=\left(\frac{1}{2}+\bZ\right)^m.
\end{equation}
We have
\begin{equation}\label{ash1}
\whB_{2N_\hbar}=\bigcup_{\ba \in \bA^m,\;|\ba|_\infty\leq N_\hbar} \whB(\ba),\;\;\whB(a):=\whB_1(\ba).
\end{equation}
The cubes in the above union   have disjoint interiors.  According to  \cite[Thm.11.3.1]{AT}, for $\hbar\leq \hbar_0$ the function $Y^\hbar$ is a.s.  Morse. Give a box $B\subset \bR^m$, the function $Y^\hbar$ will a.s. have no critical points on the boundary of $B$  Thus
\[
\bsZ^\hbar\bigl(\, \whB_{2N_\hbar}\,\bigr) =\sum_{\ba \in \bA^m\cap\whB_{2N_\hbar}}\bsZ^\hbar\bigl(\, \whB(\ba)\,\bigr).
\]
From  (\ref{dag}) we deduce that $\whB(\ba)\subset \whB_{1/\hbar}(0)$ so  (\ref{eq: vve_appr}) holds on $\whB(\ba)$. We deduce
\[
\bsZ^\hbar\bigl(\, \whB_{2N_\hbar}\,\bigr)\stackrel{(\ref{main1a})}{=}\sum_{ \ba \in \bA^m\cap\whB_{2N_\hbar}}\Bigl(\,\bsZ^0\bigl(\, \whB(\ba)\,\bigr) + O(\hbar^\infty)\,\Bigr)
\]
From (\ref{dag}) we deduce  that $2N_\hbar\leq \frac{1}{\hbar}$ so that
\[
\#\bigl(\ba \in \bA^m\cap\whB_{2N_\hbar}\,\bigr) =O(\hbar^{-m}).
\]
Hence 
\[
\bsZ^\hbar\bigl(\, \whB_{2N_\hbar}\,\bigr)=\left(\sum_{\ba \in \bA^m\cap\whB_{2N_\hbar}}\bsZ^0\bigl(\, \whB(\ba)\,\bigr)\,\right) + O(\hbar^\infty)=\bsZ^\hbar\bigl(\, \whB_{2N_\hbar}\,\bigr)+ O(\hbar^\infty).
\]

\subsection{Variance estimates}\label{ss: 24} For $\hbar\in[0,\hbar_0]$ we define
\begin{equation}\label{psi}
\psi^\hbar:\bR^m\to\bR,\;\;\psi^\hbar(\bx)=\begin{cases}
\max_{|\alpha|\leq 4} \bigl|\,\pa^\alpha_{\bx} V^\hbar(\bx)\,\bigr|, & |\bx|_\infty\leq \frac{1}{2\hbar},\\
&\\
0, & |\bx|_\infty>\frac{1}{2\hbar}.
\end{cases}
\end{equation}
\begin{lemma}\label{lemma: psih} For any $p\in[0,\infty]$  we have
\begin{equation}\label{psih}
\bigl\Vert\,\psi^\hbar-\psi^0\,\bigr\Vert_{L^p(\bR^m)}= O(\hbar^\infty).
\end{equation}
\end{lemma}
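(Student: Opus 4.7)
The strategy is to split the $L^p$-norm into two regions: the cube $\whB_{1/\hbar}$ where both $\psi^\hbar$ and $\psi^0$ are nonzero, and its complement where $\psi^\hbar\equiv 0$ so that the integrand is simply $|\psi^0|$. On the first region I will use the uniform $C^k$-approximation (\ref{eq: vve_appr}), and on the second I will exploit the Schwartz decay of $V$.

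First, since $\psi^0(\bx)=\max_{|\alpha|\leq 4}|\pa^\alpha V(\bx)|$ on all of $\bR^m$ (the indicator factor disappears at $\hbar=0$) and $V=(2\pi)^{-m}\widehat{w}$ is Schwartz (being the Fourier transform of a Schwartz function), for any $M>0$ there is a constant $C_M$ with
\[
\psi^0(\bx)\leq C_M(1+|\bx|)^{-M},\qquad \bx\in\bR^m.
\]
On the cube $\whB_{1/\hbar}$ the elementary inequality $\bigl|\max_i|a_i|-\max_i|b_i|\bigr|\leq \max_i|a_i-b_i|$ together with (\ref{eq: vve_appr}) gives, for every $N\in\bN$,
\[
\bigl|\psi^\hbar(\bx)-\psi^0(\bx)\bigr|\leq \max_{|\alpha|\leq 4}\bigl|\pa^\alpha V^\hbar(\bx)-\pa^\alpha V(\bx)\bigr|=O(\hbar^N)
\]
uniformly in $\bx\in\whB_{1/\hbar}$.

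For the $L^p$ estimate with $p<\infty$, I split
\[
\int_{\bR^m}|\psi^\hbar-\psi^0|^p\,d\bx=\int_{\whB_{1/\hbar}}|\psi^\hbar-\psi^0|^p\,d\bx+\int_{\bR^m\setminus\whB_{1/\hbar}}|\psi^0|^p\,d\bx.
\]
The first integral is bounded by $C\hbar^{Np}\vol(\whB_{1/\hbar})=C\hbar^{Np-m}$, and since $N$ is arbitrary this is $O(\hbar^\infty)$. For the second, using the Schwartz bound on $\psi^0$ with $Mp>m$,
\[
\int_{|\bx|_\infty>1/(2\hbar)}(1+|\bx|)^{-Mp}\,d\bx\leq C_{M,p}\,\hbar^{Mp-m},
\]
again $O(\hbar^\infty)$ since $M$ is arbitrary. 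The $p=\infty$ case is even simpler: the uniform bound on $\whB_{1/\hbar}$ is $O(\hbar^N)$ directly, and outside $\whB_{1/\hbar}$ one has $\|\psi^0\|_{L^\infty}\leq C_M(1+1/(2\hbar))^{-M}=O(\hbar^M)$. Summing the two contributions yields the claim.

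There is no real obstacle here; the estimate is entirely routine given (\ref{eq: vve_appr}) and the fact that $V$ is Schwartz. The only point worth flagging is that one must allow $N$ and $M$ to depend on $p$ (and to grow arbitrarily) in order to absorb the factors $\hbar^{-m}$ coming from the volume of the cube and from the integral of a polynomial decay outside it.
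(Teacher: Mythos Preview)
Your proof is correct and follows essentially the same approach as the paper: split into the cube $\whB_{1/\hbar}$ and its complement, use (\ref{eq: vve_appr}) on the cube, and use the Schwartz decay of $V$ (and hence of $\psi^0$) on the complement. Your presentation is in fact slightly more detailed than the paper's, making explicit the elementary inequality $\bigl|\max_i|a_i|-\max_i|b_i|\bigr|\leq\max_i|a_i-b_i|$ and tracking the exponents $Np-m$ and $Mp-m$.
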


\begin{proof}  We distinguish two cases.

\smallskip

\noindent {\bf 1.} $p=\infty$.     Note that  that (\ref{eq: vve_appr}) implies
\[
\sup_{|\bx|_\infty\leq 1/(2\hbar)}|\psi^\hbar(\bx)-\psi^0(\bx)|= O(\hbar^\infty).
\]
Since $V$ is a  Schwartz function we deduce that
\[
\sup_{|\bx|_\infty> 1/(2\hbar)}|\psi^\hbar(\bx)-\psi^0(\bx)|=\sup_{|\bx|_\infty> 1/(2\hbar)}|\psi^0(\bx)|=O(\hbar^\infty).
\]
\noindent {\bf 2.} $p\in[1,\infty)$.   We have
\[
\int_{\bR^m} |\psi^\hbar(\bx)-\psi^0(\bx)|^p d\bx=  \int_{|\bx|_\infty\leq 1/(2\hbar)} |\psi^\hbar(\bx)-\psi^0(\bx)|^p d\bx+\int_{|\bx|_\infty> 1/(2\hbar)}|\psi^0(\bx)|^pd\bx.
\]
The integrand in the first integral in  the right-hand side is $O(\hbar^\infty)$ and the volume  of the region is integration is $O(\hbar^{-m})$ so the first integral is $O(\hbar^\infty)$.  Since $V$ is a Schwartz function we deduce   that 
\[
|\psi^0(\bx)|= O\bigl(\, \vert \bx\vert^{-N}\,\bigr),\;\;\forall N\in\bN.
\]
This shows that the second integral is also $O(\hbar^\infty)$.
\end{proof}

\begin{proposition}\label{prop: el21}  There exists ${S}^0\in (0,\infty)$ such that
\[
\lim_{h\searrow 0}\var\bigl[\zeta^\hbar(2N_\hbar)\,\bigr] = {S}^0=\lim_{h\searrow 0}\var\bigl[\zeta^0(2N_\hbar)\,\bigr].
\]
\end{proposition}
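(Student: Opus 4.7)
My plan is to exploit the Wiener chaos decomposition (\ref{za})--(\ref{zb}) of $\bsZ^\hbar(B)$ for $B = \whB_{2N_\hbar}$ and reduce the statement to two ingredients that fit the Breuer--Major framework outlined in Subsection \ref{ss: 14}: (a) the term-by-term limit $\lim_{\hbar\to 0}\var\bigl[\zeta^\hbar_q\bigr] = \bar{S}^0_q$ for each fixed $q\geq 1$, where $\zeta^\hbar_q$ denotes the projection of $\zeta^\hbar(2N_\hbar)$ onto the $q$-th Wiener chaos, and (b) the uniform tail bound
\[
\sup_{\hbar\in[0,\hbar_0]}\sum_{q\geq Q}\var\bigl[\zeta^\hbar_q\bigr] \;\longrightarrow\; 0 \quad \text{as } Q\to\infty.
\]
Since $\zeta^\hbar(2N_\hbar)$ has mean zero and chaoses of different orders are orthogonal, $\var\bigl[\zeta^\hbar(2N_\hbar)\bigr] = \sum_{q\geq 1}\var\bigl[\zeta^\hbar_q\bigr]$, and (a) together with (b) permits the interchange of sum and limit, giving $\var[\zeta^\hbar(2N_\hbar)]\to \sum_q\bar{S}^0_q =: S^0$.

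For (a), I would use stationarity of $Y^\hbar$ to reduce $\bE\bigl[\rho^\hbar_q(\bx)\rho^\hbar_q(\by)\bigr]$ to a function $K^\hbar_q(\bz)$ of $\bz := \by - \bx$. The diagram formula for products of Hermite polynomials applied to Gaussian vectors expresses $K^\hbar_q$ as a finite sum of $q$-fold products of mixed partial derivatives of $V^\hbar$ evaluated at $\bz$, yielding in particular the pointwise bound $|K^\hbar_q(\bz)|\leq C_q\,\psi^\hbar(\bz)^q$ with $\psi^\hbar$ as in (\ref{psi}). A change of variables $(\bx,\by)\mapsto(\bx,\bz)$ followed by integration in $\bx$ produces
\[
\var\bigl[\zeta^\hbar_q\bigr] = \int_{\whB_{4N_\hbar}} K^\hbar_q(\bz)\prod_{i=1}^m\Bigl(1 - \frac{|z_i|}{2N_\hbar}\Bigr)_{\!+}\,d\bz.
\]
Assumption (\ref{ddag}) places the domain of integration inside $\whB_{1/\hbar}$, on which (\ref{eq: vve_appr}) matches derivatives of $V^\hbar$ and $V$ to all orders in $\hbar$. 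Lemma \ref{lemma: psih} furnishes the uniform $L^p$ domination $\Vert\psi^\hbar\Vert_{L^p}\leq 2\Vert\psi^0\Vert_{L^p}<\infty$ for every $p\geq 1$ and $\hbar$ small, so dominated convergence---the triangular factor tends to $1$ pointwise and is bounded by $1$---gives $\var[\zeta^\hbar_q]\to\int_{\bR^m}K^0_q(\bz)\,d\bz =: \bar{S}^0_q \geq 0$. Setting $\hbar = 0$ in the same argument recovers the classical Breuer--Major limit $\lim_{\hbar\to 0}\var[\zeta^0(2N_\hbar)] = S^0$ established in \cite{Niclt}.

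For (b), I would use the general bound $\var[\zeta^\hbar_q]\leq C_q\Vert\psi^\hbar\Vert_{L^q}^q$ derived as above, where the constants $C_q$ are determined by the Hermite expansions (\ref{fn}) and (\ref{dhve}) of the observables $f^\hbar$ and $\delta_\ve$, controlled uniformly in $\hbar$ after the auxiliary limit $\ve\to 0$ is taken. Combined with the uniform-in-$\hbar$ $L^p$-control of $\psi^\hbar$ from Lemma \ref{lemma: psih}, an explicit count of Hermite coefficients for the polynomial observable $|\det(\cdot)|$ and for the shrinking Gaussian kernel $\delta_\ve(\Lambda^\hbar_1\,\cdot)$ makes the majorant $\sum_q C_q \Vert\psi^\hbar\Vert_{L^q}^q$ summable with tail vanishing uniformly in $\hbar\in[0,\hbar_0]$. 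Putting (a) and (b) together yields $\lim_{\hbar\to 0}\var[\zeta^\hbar(2N_\hbar)] = S^0$; strict positivity $S^0 \geq \bar{S}^0_2 > 0$ is inherited from \cite{Niclt}, where $\bar{S}^0_2$ is computed as the $L^2$-norm of a nonvanishing polynomial multiple of $w(|\boxi|)$. I expect the technical crux to lie in (b): the singular nature of the observable $|\det\nabla^2 Y^\hbar|\,\delta(\nabla Y^\hbar)$ forces the $\ve\to 0$ and $q\to\infty$ limits to be coordinated, requiring quantitative control on the Hermite coefficients $d^\hbar_{\alpha,\ve}$ and $f^\hbar_\beta$ that remains bounded as $\hbar\to 0$.
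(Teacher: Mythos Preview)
Your plan for part (a) matches the paper's argument essentially line for line: stationarity plus the change of variables $(\bx,\by)\mapsto(\bx,\bz)$ produces the integral over $\whB_{4N_\hbar}$ with the triangular weight, the diagram formula gives the pointwise bound $|K^\hbar_q(\bz)|\leq C_q\psi^\hbar(\bz)^q$, and then (\ref{ddag}), (\ref{eq: vve_appr}) and Lemma~\ref{lemma: psih} justify dominated convergence. The identification of the limit with $S^0$ from \cite{Niclt} is also the same.

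The gap is in part (b). Your proposed majorant $\sum_q C_q\Vert\psi^\hbar\Vert_{L^q}^q$ does not work as stated, because $\psi^\hbar$ is \emph{not} uniformly small: near the origin $\psi^\hbar(0)=\max_{|\alpha|\leq 4}|\pa^\alpha V^\hbar(0)|$ is of order one, so $\Vert\psi^\hbar\Vert_{L^q}^q$ need not decay in $q$, while the diagram/Arcones constants $C_q$ grow at least like $K^q q^m$. The paper in fact notes explicitly that the naive Hermite-coefficient series $\sum_\gamma|\ba^\hbar(\gamma)|^2\gamma!$ diverges. The resolution in the paper (Lemma~\ref{lemma: VQ}) is a near/far split: one fixes $a>0$ with $\psi^\hbar(\bx)\leq\rho<1/K$ for $|\bx|_\infty>a$, so that on the far region Arcones' inequality combined with the factorization $\ba^\hbar(\gamma)=\omega_\hbar d_\alpha f^\hbar_\beta$ and the bounds $d_\alpha^2\alpha!\leq C$, $\sum_\beta (f^\hbar_\beta)^2\beta!\leq\bE[|\det\nabla^2Y^\hbar(0)|^2]$ yields a geometric tail $\sum_{q>Q}q^m(K\rho)^{q-1}$; the near region, which has bounded volume, is handled by an entirely different mechanism---the $L^2$-continuity of $\hbar\mapsto\bsZ^\hbar(B)$ for the fixed unit box $B$ (Proposition~\ref{prop: key}(iv)), which forces $\bE\bigl[|\eP_{>Q}\zeta^\hbar(B)|^2\bigr]\to 0$ uniformly in $\hbar$. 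Your proposal does not contain either the split or the appeal to Proposition~\ref{prop: key}(iv), and your closing remark about coordinating $\ve\to 0$ with $q\to\infty$ misidentifies the difficulty: the $\ve$-limit is already absorbed in (\ref{za}), and the real obstruction is the large-$\psi^\hbar$ region near the diagonal.
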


\begin{proof} The  \cite{Niclt} we proved that  the limit
\begin{equation}\label{s0}
\lim_{N\to \infty}\var\bigl[\zeta^0(2N)\,\bigr]
\end{equation}
exists, it is finite and nonzero. We  denote by $S^0$ this  limit.  It remains to prove two facts.

\begin{itemize}

\item[($\mathbf{F}_1$)] The limit $\bar{S}^0:=\lim_{h\searrow 0}\var\bigl[\zeta^\hbar(2N_\hbar)\,\bigr]$  exists and it is finite.

\item[($\mathbf{F}_2$)]  $S^0=\bar{S}^0$.

\end{itemize}
To prove these facts, we  will employ  a  refinement of the strategy  used in the proof of  \cite[Prop.3.3]{Niclt}.

\medskip

\noindent{\bf Proof of $\mathbf{F}_1$}. Using (\ref{za}) we deduce 
\[
\zeta^\hbar(2N_\hbar)=(2N_\hbar)^{-m/2}\Bigl( \bsZ^\hbar(\whB_{2N_\hbar})-\bE\bigl[\,\bsZ^\hbar(\whB_{2N_\hbar})\,\bigr]\,\Bigr) =(2N_\hbar)^{-m/2}\sum_{q>0} Z_q^\hbar(\whB_{2N_\hbar}).
\]
We set
\[
S^\hbar:= \var\bigl[\, \zeta^\hbar(2N_\hbar)\,\bigr]=\bE\bigl[\, \zeta^\hbar(2N_\hbar)^2\,\bigr]=\sum_{q>0}\underbrace{(2N_\hbar)^{-m} \bE\bigl[\,Z^\hbar(\whB_{2N_\hbar})^2\,\bigr]}_{=: S^\hbar_q}
\]
To estimate $S^\hbar_q$   we write
\[
Z^\hbar_q(\whB_{2N_\hbar})=\int_{\whB_{2N_\hbar}} \rho^\hbar_q(\bx) d\bx,
\]
where $\rho^\hbar_q(\bx)$ is described in (\ref{zb}). Then
\[
S_q^\hbar=(2N_\hbar)^{-m}\int_{\whB_{2N_\hbar}\times \whB_{2N_\hbar} } \bE\bigl[ \,\rho^\hbar_q(\bx)\rho^\hbar_q(\by)\,\bigr] d\bx d\by
\]
(use the stationarity of $Y^\hbar(\bx)$)
\[
=(2N_\hbar)^{-m}\int_{\whB_{2N_\hbar}\times \whB_{2N_\hbar} } \bE\bigl[ \,\rho^\hbar_q(0)\rho^\hbar_q(\by-\bx)\,\bigr] d\bx d\by
\]
\[
=\int_{\whB_{4N_\hbar}} \bsE\bigl[ \,\rho^\hbar_q(0)\rho^\hbar_q(\bu)\,\bigr]\prod_{k=1}^m\left(1-\frac{|u_k|}{2N_\hbar}\right) d\bu.
\]
The last equality is  obtained by integrating along the fibers of the  map 
\[
\whB_{2N_\hbar}\times \whB_{2N_\hbar}\ni (\bx,\by)\mapsto \by-\bx\in \whB_{4N_\hbar}.
\]
At this point we need to invoke (\ref{zb}) to the effect that
\[
\rho_q^\hbar(\bx) =\sum_{\substack{(\alpha,\beta)\in\eI_m,\\ |\alpha|+|\beta|=q}} \omega_\hbar d_\alpha f^\hbar_\beta H_\alpha(U(\bx)) H_\beta(A(\bx)).
\]
We can rewrite   this in a more compact form.  Set
\[
\Xi^\hbar(\bx):=\bigl( U(\bx),\;A(\bx) \,\bigr).
\]
For $\gamma=(\alpha,\beta)\in \eI_m$ we set
\[
\ba^\hbar(\gamma):=  \omega_\hbar d_\alpha f^\hbar_\beta,\;\; H_\gamma(\Xi^\hbar(\bx)):=H_\alpha(U(\bx)) H_\beta(A(\bx)).
\]
Then
\begin{equation}\label{rhoq0}
\rho^\hbar_q(\bx)=\sum_{\gamma\in\eI_m,\;|\gamma|=q} \ba^\hbar(\gamma) H_\gamma\bigl(\,\Xi^\hbar(\bx)\,\bigr),
\end{equation}
\[
\bE\bigl[ \,\rho^\hbar_q(0)\rho^\hbar_q(u)\,\bigr]=\sum_{\substack{\gamma,\gamma'\in\eI_m\\ |\gamma|=|\gamma'|=q}} \ba^\hbar(\gamma)\ba^\hbar(\gamma')\bE\bigl[\,H_\gamma(\,\Xi^\hbar(0)\,) H_{\gamma'}(\,\Xi^\hbar(\bu)\,)\,\bigr].
\]
We set  $\omega(m):=m+\nu(m)$, and we denote by $\Xi_i(\bx)$, $1\leq i\leq  \omega(m)$, the components of $\Xi(\bx)$ labelled so  that $\Xi_i(\bx)=U_i(\bx)$, $\forall 1\leq i\leq m$.  For $\bu\in\bR^m$, $\hbar\in[0,\hbar_0]$  and $1\leq i,j\leq \omega(m)$ we define the covariances
\[
\Gamma^\hbar_{ij}(\bu):=\bE\bigl[\,\Xi^\hbar_i(0)\Xi^\hbar_j(\bu)\,\bigr].
\]
Using the Diagram Formula (see e.g.\cite[Cor. 5.5]{Maj} or \cite[Thm. 7.33]{Jan}) we deduce that  for any $\gamma,\gamma'\in\eI_m$ such that $|\gamma|=|\gamma'|=q$, there exists a \emph{universal} homogeneous polynomial of degree $q$, $P_{\gamma,\gamma'}$ in the variables $\Gamma_{ij}(\bu)$ such that
\[
\bE\bigl[\,H_\gamma(\,\Xi^\hbar(0)\,) H_{\gamma'}(\,\Xi^\hbar(\bu)\,)\,\bigr]=P_{\gamma,\gamma'}\bigl(\,\Gamma^\hbar_{ij}(\bu)\,\bigr).
\]
Hence
\begin{equation}\label{vqn1}
S^\hbar_q=(2N_\hbar)^{-m}\sum_{\substack{\gamma,\gamma'\in\eI_m\\
|\gamma|=|\gamma'|=q}} \ba^\hbar(\gamma)\ba^\hbar(\gamma')\;\underbrace{\int_{\whB_{4N_\hbar}}P_{\gamma,\gamma'}\bigl(\,\Gamma^\hbar_{ij}(\bu)\,\bigr) \prod_{k=1}^m\left(1-\frac{|u_k|}{2N_\hbar}\right) d\bu}_{=:R^\hbar(\bgamma,\bgamma')}.
\end{equation}
From  (\ref{ddag}) we deduce that 
\begin{equation}\label{whb}
\whB_{4N_\hbar}\subset \whB_{1/\hbar}
\end{equation} 
so that $\whB_{4N_\hbar}\subset \supp \psi^\hbar$, where $\psi^\hbar$ is the function defined in (\ref{psi}).   We deduce  that there exists a positive constant $K$, independent of $\hbar\in [0,\hbar_0]$, such that
\begin{equation}\label{psik}
\bigl|\,\Gamma^\hbar_{i,j}(\bu)\,\bigr|\leq K\psi^\hbar(\bu),\;\;\forall i,j=1,\dotsc, \omega(m),\;\;\bu\in\whB_{4N_\hbar}.
\end{equation}
From (\ref{psik}) we deduce that   for any $\gamma,\gamma'\in\eI_m$ such that $|\gamma|=|\gamma'|=q$ there exists a constant $C_{\gamma,\gamma'}>0$ such that
\[
\bigl|\, P_{\gamma,\gamma'}\bigl(\,\Gamma^\hbar_{ij}(\bu)\,\bigr) \,\bigr|\leq C_{\gamma,\gamma'}\psi^\hbar(\bu)^q,\;\;\forall \bu\in\whB_{4N_\hbar}.
\]
We know from (\ref{ast}) that $N_\hbar\to\infty$ as $\hbar\to 0$. Arguing exactly as in the proof of Lemma \ref{lemma: psih} we deduce   that
\begin{equation}\label{rnlim}
\lim_{\hbar\to 0} R^\hbar(\bgamma,\bgamma')=R^0(\gamma,\gamma'):= \int_{\bR^m}P_{\gamma,\gamma'}\bigl(\,\Gamma^0_{ij}(\bu)\,\bigr) d\bu, 
\end{equation}
and thus
\begin{equation}\label{vq2}
\lim_{\hbar\to 0} S_q^\hbar=\bar{S}^0_q:= \sum_{\substack{\gamma,\gamma'\in\eI_m\\
|\gamma|=|\gamma'|=q}} \ba^0(\gamma)\ba^0(\gamma') R^0(\gamma,\gamma')=\int_{\bR^m} \bE\bigl[ \,\rho^0_q(0)\rho^0_q(\bu)\,\bigr]d\bu.
\end{equation}
Since $S^\hbar_q\geq 0$, $\forall q, \hbar$, we have
\[
S^\hbar_q\geq 0,\;\;\forall q.
\]
We denote by $\eP_{>Q}$ the projection
\[
\eP_{>Q}=\sum_{q>Q}\eP_q,
\]
where  $\eP_q$ denotes the projection on the $q$-th chaos component of $\widehat{\eX}$.

\begin{lemma}\label{lemma: VQ} For any positive integer $Q$ we set
\[
S^\hbar_{>Q}:=\bE\Bigl[\,\bigr|\eP_{>Q}\zeta^\hbar(2N_\hbar)\bigr|^2\,\Bigr]=\sum_{q>Q}S^\hbar.
\]
Then
\begin{equation}\label{supN}
\lim_{Q\to\infty} \bigl(\,\sup_\hbar S_{>Q}\,\bigr) =0,
\end{equation}
the series 
\[
\sum_{q\geq 1}\bar{S}^0_q
\]
is convergent and, if $\bar{S}^0$ is its sum, then
\begin{equation}\label{vinfty}
\bar{S}^0=\lim_{\hbar\to 0} S^\hbar=\lim_{\hbar\to 0}\sum_{q\geq 1}  {S}^\hbar_{q}.
\end{equation}
\end{lemma}

\begin{proof} For $\bx\in\bR^m$ we denote by $\theta_\bx$ the shift operator associated with the stationary  fields  $\bsY^\hbar$, i.e.,
\[
\theta_\bx Y^\hbar(\bullet)= Y^\hbar(\bullet+\bx).
\] 
This extends to a unitary map $L^2(\Omega)\to L^2(\Omega)$  that commutes with the chaos decomposition of $L^2(\Omega)$. Moreover,  for any  box  $B$  and any $\hbar\in [0,\hbar_0]$ we have
\[
\bsZ^\hbar(B+\bx)=\theta_\bx \bsZ^\hbar(B).
\]
If we denote by $\eL_\hbar$ the set 
\begin{equation}\label{elh}
\eL_\hbar:=\bA^m\cap \whB_{4N_\hbar},
\end{equation}
then we deduce
\begin{equation}\label{b1}
\zeta^\hbar(2N_\hbar)=(2N_\hbar)^{-m/2}\sum_{\bp\in\eL_\hbar} \theta_\bp\zeta^\hbar\bigl(B),\;\;B=\whB_1.
\end{equation}
 We have
\[
\eP_{>Q}\zeta^\hbar(2N_\hbar)=(2N_\hbar)^{-m/2}\sum_{\bp\in\eL_\hbar} \theta_\bp \eP_{>Q}\zeta^\hbar\bigl(B).
\]
Using the stationarity of $Y^\hbar$ we deduce
\begin{equation}\label{V>Q}
S^\hbar_{>Q,}=\bE\Bigl[\,\bigr|\eP_{>Q}\zeta^\hbar(2N_\hbar)\bigr|^2\,\Bigr]=(2N_\hbar)^{-m}\sum_{\bp\in\eL_{\hbar}}\nu(\bp, N_\hbar)\bsE\bigl[\,\eP_{>Q}\zeta\bigl(B)\cdot\theta_\bp \eP_{>Q}\zeta^\hbar\bigl(B)\,\bigr],
\end{equation}
where $\nu(\bp,N_\hbar)$ denotes the number of  points $\bx\in\eL_\hbar$ such that $\bx-\bp\in\whB_{2N_\hbar}$. Clearly
\begin{equation}\label{nubs}
\nu(\bp, N_\hbar)\leq (2N_\hbar)^m.
\end{equation}
With $K$ denoting the positive constant in (\ref{psik})  we  deduce from Lemma \ref{lemma: psih} that we can choose  positive numbers $a,\rho$ such that
\[
\psi^\hbar(\bx)\leq \rho<\frac{1}{K},\;\;\forall |\bx|_{\infty}>a,\;\;\forall \hbar\in[0,\hbar_0].
\]
We split $S^\hbar_{>Q}$ into two parts,
\[
S^\hbar_{>Q}=S^\hbar_{>Q,0}+S^\hbar_{>Q,\infty},
\]
where  $S^\hbar_{>Q,0}$ is made up of  the terms in (\ref{V>Q})  corresponding to points  $\bp\in\eL_{\hbar}$ such that $|\bp|_\infty <a+1$,  while $S^\hbar_{>Q,\infty}$  corresponds to  points $\bp\in\eL_{\hbar}$ such that  $|\bp|_\infty\geq a+1$.

We  deduce from (\ref{nubs}) that for $2M>a+1$ we have
\[
\Bigr| \,S^\hbar_{>Q,0}\,\Bigr|\leq (2N_\hbar)^{-m}(2a+2)^m  (2N_\hbar)^m\bE\Bigl[\, \bigl|\eP_{>Q}\zeta^\hbar(B)\bigr|^2\,\Bigr].
\]
Proposition \ref{prop: key}(iv) implies that, as $Q\to \infty$, the right-hand side of the above inequality goes to $0$ uniformly with respect to $\hbar$.

To estimate $S^\hbar_{>Q,\infty}$ observe that for $\bp\in \eL_{\hbar}$ such that $|\bp|_\infty>a+1$ we have
\begin{equation}\label{P>Q}
\bsE\bigl[\,\eP_{>Q}\zeta^\hbar(B)\cdot\theta_\bp \eP_{>Q}\zeta^\hbar(B)\,\bigr]=\sum_{q>Q}\int_B\int_B\bE\bigl[\,\rho^\hbar_q(\bx)]\rho^\hbar_q(\by+\bp)\,\bigr] d\bx d\by,
\end{equation}
where we recall from (\ref{rhoq0}) that
\[
\rho^\hbar_q(\bx)=\sum_{\gamma\in\eI_m,\;|\gamma|=q} \ba^\hbar(\gamma) H_\gamma\bigl(\,\Xi^\hbar(\bx)\,\bigr),\;\;\eI_m:=\bN^m_0\times\bN_0^{\nu(m)},\;\;\nu(m)=\frac{m(m+1)}{2}.
\]
Thus
\[
\bE\bigl[\,\rho_q(\bx)\rho_q(\by+\bp)\,\bigr]=\bE\Bigl[\,\Bigl(\sum_{\substack{\gamma\in\eI_m,\\|\gamma|=q}} \ba^\hbar(\gamma) H_\gamma\bigl(\,\Xi^\hbar(\bx)\,\bigr)\,\Bigr)\Bigl(\, \sum_{\substack{\gamma\in\eI_m,\\ |\gamma|=q}} \ba^\hbar(\gamma) H_\gamma\bigl(\,\Xi^\hbar(\by+\bp)\,\bigr)\,\Bigr)\,\Bigr]
\]
Arcones' inequality \cite[Lemma 1]{Arc} implies that
\begin{equation}\label{P>Q1}
\bE\bigl[\,\rho_q(\bx)\rho_q(\by+\bp)\,\bigr]\leq  K^q\psi^\hbar(\bp+\by-\bx)^q \sum_{\substack{\gamma\in\eI_m,\\ |\gamma|=q}} |\ba^\hbar(\gamma)|^2\gamma!.
\end{equation}
We are not out of the woods yet since the series $\sum_{\gamma\in\eI_m} |\ba^\hbar(\gamma)|^2\gamma !$ is divergent.  To bypass this issue observe taht, for $\gamma=(\alpha,\beta)\in\eI_m$ we  have
\[
\ba^\hbar(\gamma)= \omega_\hbar d_\alpha f^\hbar_\beta
\]
where, according to (\ref{dalph}) we have $d_\alpha=\frac{1}{\alpha!}(2\pi )^{-\frac{m}{2}}  H_\alpha(0)$. Recalling that
\[
H_{2r}(0)= (-1)^r\frac{(2r)!}{2^rr!},\;\;H_{2r+1}(0)=0.
\]
we deduce that
\[
 (2r)!\Bigl|\frac{1}{(2r)!}H_{2r}(0)\Bigr|^2=\frac{1}{2^{2r}}\binom{2r}{r}\leq 1, 
\]
and
\[
d_\alpha^2\alpha!\leq C=\frac{1}{(2\pi)^{m/2}}.
\]
Using (\ref{lam}) and (\ref{fn}) we conclude that 
\[
 \sum_{\substack{\gamma\in\eI_m,\\ |\gamma|=q}} |\ba^\hbar(\gamma)|^2\gamma!\leq \omega_\hbar (2\pi)^{-m/2} q^m \sum_{\substack{\beta\in\bN_0^{\nu(m)},\\ |\beta|\leq q}} (f^\hbar_\beta)^2\beta!\leq C q^m\bE\bigl[\,|\det \nabla^2 Y^\hbar(0)|^2\,\bigr].
\]
Using this in (\ref{P>Q}) and (\ref{P>Q1}) we deduce
\[
\bE\bigl[\,\eP_{>Q}\zeta^\hbar(B)\cdot\theta_\bp \eP_{>Q}\zeta^\hbar(B)\,\bigr]
\]
\[
\leq \underbrace{C\bE\bigl[\,|\det\nabla Y^\hbar(0)|^2\,\bigr]}_{=:C'}\; \sum_{q>Q} q^mK^q\int_B \int_B\psi^\hbar(\bs+\bu-\bt)^q d\bu d\bt
\]
Hence
\[
\bigl|\,S^\hbar_{>Q, \infty}\bigr|\leq C'\Bigl( \sum_{q>Q}  q^mK^q\rho^{q-1}\Bigr)\Bigl( \sum_{\substack{\bp\in\eL_{\hbar},\\ |\bs|_\infty>a+1}}\int_B\int_B \psi^\hbar(\bp+\by-\bx) d\by d\bx\Bigr),
\]
where we have used  the fact that for $|p|_\infty\geq a+1$, $|\by|,|\bx|\leq 1$  we have  $\psi^\hbar(\bp+\by-\bx)<\rho$. Since $\rho<\frac{1}{K}$, the sum 
\[
\sum_{q>Q}  q^mK^q\rho^{q-1}
\]
is the tail of a convergent power series.   On the other hand,
\[
\sum_{\substack{\bp\in\eL_{\hbar},\\ |\bp|_\infty>a+1}} \int_B\int_B \psi^\hbar(\bp+\by-\bx) d\by d\bx\leq\sum_{\bp\in\eL_{\hbar}}\int_{[-1,1]^m}\psi^\hbar(\bp+\by)
\]
\[
\leq 2\int_{\bR^m}\psi^\hbar(\by) d\by\stackrel{(\ref{psih})}{=}O(1).
\]
This proves   that $\sup_\hbar|S^\hbar_{>Q, \infty}|$ goes to zero as $Q\to\infty$ and  completes the proof of  (\ref{supN}). The claim (\ref{vinfty}) follows immediately from (\ref{supN}). This concludes the proof of  Lemma \ref{lemma: VQ} and of the  fact $\mathbf{F}_1$. \end{proof}

\medskip

\noindent{\bf Proof of $\mathbf{F}_2$}. In \cite{Niclt} we  have shown that  the limit  $S^0$ in (\ref{s0}) is the sum of a series
\[
S^0=\sum_{q\geq 1} S^0_q,\;\; S^0_q= \int_{\bR^m} \bE\bigl[ \,\rho^0_q(0)\rho^0_q(\bu)\,\bigr]d\bu.
\]
The equality (\ref{vq2})  shows that $\bar{S}^0=S^0>0$. This concludes the proof of Proposition \ref{prop: el21}. \end{proof}

\subsection{Proof of Theorem \ref{th: main2}}\label{ss: 25} In \cite{Niclt} we have shown that, as $\hbar\to 0$, the random variables converge in law to a random variable $\sim\eN(0, S^0)$. 

As explained in Subsection \ref{ss: 14}, to conclude the proof of Theorem \ref{th: main2} it suffices to establish  the asymptotic normality as $\hbar \to 0$ of the family
 \[
\zeta^\hbar_{q}=\frac{1}{(2N_\hbar)^{m/2}} \int_{\whB_{2N_\hbar}} \rho_q(\bx) d\bx,\;\;\forall q\geq 1.
 \]
This follows from the fourth-moment theorem \cite[Thm. 5.2.7]{NP}, \cite{NuaPe}.      Here are the details. 

Recall  from \cite[IV.1]{Jan} that we have a surjective  isometry $\Theta_q: \eX^{\odot q}\to\eX^{:q:}$ , where  $\eX^{\odot q}$ is the $q$-th symmetric power  and $\eX^{:q:}$  is the $q$-th  chaos component of $\widehat{\eX}$.  The multiple  Ito integral $\bsI_q$  is then the map
\[
\bsI_q=\frac{1}{\sqrt{q!}} \Theta_q.
\]
We can write $\zeta^\hbar_{q}$  as a multiple  Ito integral
\[
\zeta^\hbar_{q} = \bsI_q\bigl[\,g_q^\hbar\,\bigr], \;\;g_q^\hbar\in\eX^{\odot n}.
\]
According to  \cite[Thm.5.2.7(v)]{NP}, to prove that $\zeta^\hbar_{q}$ converge in law to  a normal variable it suffices to show that
\begin{equation}\label{EL18}
\lim_{\hbar\to 0}\Vert g_q^\hbar\otimes_r g_q^\hbar\Vert_{\eX^{\otimes(2q-2r)}}=0,\;\;\forall r=1,\dotsc, q-1.
\end{equation}
In our context, using the isometry $\bsI$ in (\ref{ito})  we can view $g_q^\hbar$ as a function 
\[
g_q^\hbar\in L^2\bigl( \, (\bR^m\times \bR^m)^q\,\bigr) ,\;\;g_q^\hbar=g_q^\hbar(\bz_1,\dotsc,\bz_q),\;\;\bz_j\in\bR^m\times \bR^m,
\]
and then
\[
g_q^\hbar\otimes_r g_q^\hbar\in  L^2\bigl( \, (\bR^m\times \bR^m)^{2(q-r)}\,\bigr),\;\;g_q^\hbar\otimes_r g_q^\hbar(\bz_{q-r+1},\bz'_{q-r+1},\dotsc ,\bz_q,\bz'_q)
\]
\[
=\int_{(\bR^m\times\bR^m)^r}  g_q^\hbar(\bz_1,\dotsc,\bz_q, \bz_{q-r+1},\dotsc,\bz_q) g_q^\hbar(\bz_1,\dotsc,\bz_q, \bz'_{q-r+1},\dotsc,\bz'_q) d\bz_1\cdots d\bz_q.
\]
 To show  (\ref{EL18}) we invoke  the  arguments  following  the inequality (18) in the second step of the proof of \cite[Prop. 2.4]{EL}  which extend   to the setup in this paper.

\subsection{Proof of Theorem \ref{th: main3}} \label{ss: cor}    We set 
\[
N_\hbar:=\left\lfloor \,\frac{r}{2\hbar}\,\right\rfloor,\;\;s_\hbar:=\frac{ r}{2\hbar N_\hbar}.
\]
Then  $N_\hbar\in \bN$,
\[
\lim_{\hbar \to 0} N_\hbar=\infty,\;\; \whB_{2N_\hbar s_\hbar} =\whB_{\hbar^{-1}r}\subset \whB_{1/(2\hbar)},\;\;\lim_{\hbar\to 0} s_\hbar=1.
\]
Thus, $\whB_{\hbar^{-1}r}$  is  a cube, centered at $0$  with vertices in the lattice $(s_\hbar\bZ)^m$ and $s(\hbar)\approx 1$ for $\hbar$ small.

To reach  the conclusion (i)   run the arguments   in the the proof of Theorem \ref{th: main1} with the following modified notations:  the box $\whB_{2N_\hbar}$ should be replaced with the box $\whB_{2N_\hbar s_\hbar}=s_\hbar\whB_{2N_\hbar}$,  the lattice $\bA^m$ in (\ref{ash}) replaced by $s_\hbar\bA^m$, and $\whB(\ba)$ redefined as $s_\hbar\whB_1(\ba)=\whB_{s_\hbar}(\ba)$.

To reach  the conclusions (ii) and (iii)  of  Theorem \ref{th: main3} run the arguments   in the  subsections \ref{ss: 24} and \ref{ss: 25} with the following modified notations:  the box $\whB_{2N_\hbar}$ should be replaced with the box $\whB_{2N_\hbar s_\hbar}=s_\hbar\whB_{2N_\hbar}$, in (\ref{elh}) the set $\eL_\hbar$ should be redefined to be
\[
\eL_\hbar:= s_\hbar\bigl( \bA^m\cap \whB_{4N_\hbar}\,\bigr),
\]
and  the box $B$ in (\ref{b1}) should be redefined to be $\whB_{s_\hbar}=s_\hbar\whB_1$.\qed 

\begin{remark} The above proof shows that for any $r\in (0,1/2]$ we have
\begin{equation}\label{supNa}
\lim_{Q\to\infty} \left(\,\sup_\hbar \bE\Bigl[\,\bigr|\eP_{>Q}\zeta^\hbar(r/\hbar)\bigr|^2\,\Bigr]\,\right) =0.
\end{equation}\qed 
\end{remark}

\subsection{Proof of Theorem \ref{th: main4}}\label{ss: main4}  Note that (see  (\ref{nota}) for notation)
\[
\bsZ(X^\hbar, \bT^m)=\bsZ^\hbar(1/h).
\]
Hence it suffices to prove
\begin{enumerate} 

\item As $\hbar\to 0$ 
\[
\bsZ^\hbar(1/h)= \hbar^{-m}\bigl(\,  \bar{Z}_0+O(\hbar^\infty)\,\bigr).
\]
\item As $\hbar\to 0$  
\[
\var\bigl[\, \bsZ^\hbar(1/h)\,\bigr]\sim S^0 \cdot (1/\hbar)^m.
\]

\item The families of random variables
\[
\Bigl\{\,\zeta^\hbar(1/\hbar)\,\Bigr\}_{\hbar\in(0,\hbar_0]}\;\;\mbox{and}\;\;\Bigl\{\, \zeta^0(1/\hbar)\,\Bigr\}_{\hbar\in(0,\hbar_0]}
\]
converge in distribution  as $\hbar\to 0$ to normal random variables $\sim \eN(0, S^0)$.
\end{enumerate}

From this point of view, these results  extend  Theorem \ref{th: main3}) to the case $r=1$. We cannot invoke  Theorem \ref{th: main3} (or Theorem \ref{th: main2}) with $2N_\hbar$ formally replaced by $\frac{1}{\hbar}$)  to prove (ii) and (iii) because the condition (\ref{ddag}) is violated.  However, a slight modification of the arguments in the proof of  Theorem \ref{th: main2} will  yield the desired conclusion.        

Let us first observe that it suffices to show that the variables $\zeta^\hbar(1/\hbar)$ satisfy  the  conclusions of Proposition \ref{prop: el21}.  Once  we verify this fact, the arguments  in  Subsection \ref{ss: 25}  extend without any modification to this case yielding  Theorem \ref{th: main4}.

Let us first show ${\bf F}_1$, i.e., the limit 
\[ 
\bar{S}^0:=\lim_{h\searrow 0}\var\bigl[\zeta^\hbar(1/h)\,\bigr]
\]
exists  and it is finite.   Using (\ref{vqn1}) (with $2N_\hbar$ replaced by $1/\hbar$) we deduce
\begin{equation}\label{vqn1a}
S^\hbar_q=\eP_q\zeta(1/\hbar)=\sum_{\substack{\gamma,\gamma'\in\eI_m\\
|\gamma|=|\gamma'|=q}} \ba^\hbar(\gamma)\ba^\hbar(\gamma')\;\int_{\whB_{2/\hbar}}P_{\gamma,\gamma'}\bigl(\,\Gamma^\hbar_{ij}(\bu)\,\bigr) \prod_{k=1}^m(1-\hbar|u_k|)  d\bu.
\end{equation}
We set
\[
\bOm_m:=\{\pm 1\}^m\subset \bR^m.
\]
For $\bom\in\bOm_m$ we   denote by $\eO^\bom$ the ``octant'' in $\bR^m$ that contains $\bom$, i.e., 
\[
\eO^\bom:=\bigl\{ \bx\in\bR^m;\;\;x_i\omega_i>0,\;\;\forall i\,\bigr\},\;\;\whB^\bom_R:=\whB_R\cap \eO^\bom.
\] 
Note that
\[
\int_{\whB_{2/\hbar}}P_{\gamma,\gamma'}\bigl(\,\Gamma^\hbar_{ij}(\bu)\,\bigr) \prod_{k=1}^m\bigl(1-\hbar|u_k|\bigr)  d\bu=\sum_{\bom\in\bOm_m} \int_{\whB_{2/\hbar}^\bom}P_{\gamma,\gamma'}\bigl(\,\Gamma^\hbar_{ij}(\bu)\,\bigr) \prod_{k=1}^m\bigl(1-\hbar|u_k|\bigr)  d\bu.
\]
For any $\bom\in\bOm_m$ the cube $\whB^\bom_{2/\hbar}$  it is centered at  $\frac{1}{2\hbar}\bom$ and it  has size $\frac{1}{\hbar}$. For any $\bnu\in\bOm_m$ the cube $\whB_{1/\hbar}$ has a unique  vertex in $\eO^\bnu$, namely $\frac{1}{2\hbar}\bnu$.  We obtain a decomposition (see Figure \ref{fig: 1})
\begin{figure}[ht]
\centering{\includegraphics[height=4.3in,width=3.2in]{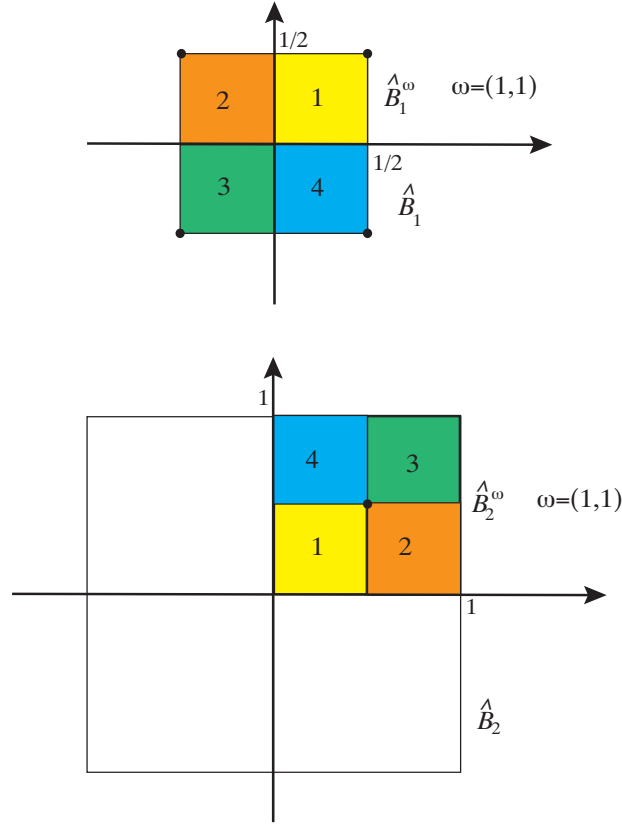}}
\caption{\sl The decomposition (\ref{dec}) in the case $m=2$, $\hbar=1$, $\bom=(1,1)$. The $2^m$ quadrants decompose $\whB_1$ into $2^m$ half-sized squares and that can be reassembled back into $\whB_2^\bom$  via the $2^m$ translations that  map the a vertex  of $\whB_1$  to the center of $\whB_2^\bom$.}
\label{fig: 1}
\end{figure}

\begin{equation}\label{dec}
\whB^\bom_{2/\hbar}\setminus H =\bigcup_{\bnu\in\bOm_m}\Bigl( \frac{1}{2\hbar}(\bom-\bnu)+\whB^\bnu_{1/\hbar}\,\Bigr),\;\;H=\bigcup_{i=1}^m\{x_i=0\}.
\end{equation}
 Denote by $L_\hbar$ the lattice $(\hbar^{-1}\bZ)^m$.  Note that the integrand  $P_{\gamma,\gamma'}\bigl(\,\Gamma^\hbar_{ij}(\bu)\,\bigr) $ is $L_\hbar$-periodic because $V^\hbar$ is such.  Since 
\[
\frac{1}{2\hbar}(\bom-\bnu)\in L_\hbar,\;\;\forall \bom,\bnu\in\bOm_m,
\]
we deduce
\[
 \int_{\whB_{2/\hbar}^\bom}P_{\gamma,\gamma'}\bigl(\,\Gamma^\hbar_{ij}(\bu)\,\bigr) \prod_{k=1}^m(1-\hbar|u_k|)  d\bu
 \]
 \[
 =\sum_{\bnu\in\bOm_m} \int_{\whB_{1/\hbar}^\bnu}P_{\gamma,\gamma'}\bigl(\,\Gamma^\hbar_{ij}(\bv)\,\bigr) \prod_{k=1}^m\left(1-\hbar\left|v_k+\frac{1}{2\hbar}(\omega_k-\nu_k)\right|\right)  d\bv
 \]
\[
=\int_{\whB_{1/\hbar}}P_{\gamma,\gamma'}\bigl(\,\Gamma^\hbar_{ij}(\bv)\,\bigr)  f^\hbar_\bom(\bv) d\bv,
\]
where
\[
f^\hbar_\bom(\bv)\Bigl|_{\eO^\nu}:= \prod_{k=1}^m\left(1-\hbar\left|v_k+\frac{1}{2\hbar}(\omega_k-\nu_k)\right|\right),
\]
and $f^\hbar_\bom(\bv)=0$ if $\bv$ lies on one of the coordinate hyperplanes $\{x_i=0\}\subset \bR^m$. Hence
\begin{equation}\label{2hbar}
\int_{\whB_{2/\hbar}}P_{\gamma,\gamma'}\bigl(\,\Gamma^\hbar_{ij}(\bu)\,\bigr) \prod_{k=1}^m(1-\hbar|u_k|)  d\bu=\int_{\whB_{1/\hbar}}P_{\gamma,\gamma'}\bigl(\,\Gamma^\hbar_{ij}(\bv)\,\bigr) \left(\sum_{\bom\in\bOm_m} f^\hbar_\bom(\bv)\right) d\bv.
\end{equation}
Now observe that
\begin{subequations}
\begin{equation}\label{fboma}
|f^\hbar_\bom(\bv)|\leq  3^m,\;\;\forall \bom,\;\;\forall \bv\in\bR^m,
\end{equation}
\begin{equation}\label{fbomb}
\lim_{\hbar\searrow 0} f^\hbar_\bom(\bv)=f^\bom_0(\bv)=\bone_{\eO^\bom}.
\end{equation}
\end{subequations}
Using (\ref{2hbar}),(\ref{fboma}), (\ref{fbomb}) and  (\ref{psik})  we deduce exactly as in the proof of Lemma \ref{lemma: psih}   that
\begin{equation}\label{rnlim1}
\lim_{\hbar\searrow 0}\int_{\whB_{2/\hbar}}P_{\gamma,\gamma'}\bigl(\,\Gamma^\hbar_{ij}(\bu)\,\bigr) \prod_{k=1}^m(1-\hbar|u_k|)  d\bu=R^0(\gamma,\gamma'):= \int_{\bR^m}P_{\gamma,\gamma'}\bigl(\,\Gamma^0_{ij}(\bu)\,\bigr) d\bu.
\end{equation}
Hence
\begin{equation}\label{vq2a}
\lim_{\hbar\to 0} S_q^\hbar=\bar{S}^0_q:= \sum_{\substack{\gamma,\gamma'\in\eI_m\\
|\gamma|=|\gamma'|=q}} \ba^0(\gamma)\ba^0(\gamma') R^0(\gamma,\gamma')=\int_{\bR^m} \bE\bigl[ \,\rho^0_q(0)\rho^0_q(\bu)\,\bigr]d\bu.
\end{equation}
This is the counterpart of (\ref{vq2}).

Let us explain  how to prove Lemma \ref{lemma: VQ} in this context when (\ref{ddag}) is not satisfied.   We set
\[
\bOm_m^\hbar=\frac{1}{4\hbar}\cdot\bOm_m=\frac{1}{2}N_\hbar\bOm_m
\]
The point $(4\hbar)^{-1}\bom$  in $\bOm_M^\hbar$ the  centers of the cube  $\whB^\bom_{1/\hbar}=\whB_{1/\hbar}\cap\eO^\bom$. We have
\[
\zeta(1/\hbar)=(\hbar)^{m/2}\Bigl(\,\bsZ^\hbar(\whB_{1/h})-\bE\bigl[\,\bsZ^\hbar(\whB_{1/\hbar})\,\bigr]\,\Bigl)
\]
From the equality
\[
\whB_{1/\hbar}=\bigcup_{\bom^\hbar\in\bOm_m^\hbar} \whB_{1/2\hbar}(\bom^\hbar)
\]
we deduce
\[
\bsZ^\hbar(\whB_{1/h})-\bE\bigl[\,\bsZ^\hbar(\whB_{1/\hbar})\,\bigr]=\sum_{\bom^\hbar\in\bOm_m^\hbar}\theta_{\bom^\hbar} \bsZ^\hbar(\whB_{1/(2\hbar)})-\bE\bigl[\,\bsZ^\hbar(\whB_{1/(2\hbar)})\,\bigr],
\]
Since
\[
\bigl|\,\whB_{1/(2\hbar)}\,\bigr|=(2\hbar)^{-m},
\]
we deduce
\[
\zeta(1/\hbar)=\frac{1}{2^{m/2}} \sum_{\bom^\hbar\in\bOm_m^\hbar}\theta_{\bom^\hbar}\zeta(\whB_{1/(2\hbar)})
\]
In particular
\[
\eP_{>Q}\zeta(1/\hbar)=\frac{1}{2^{m/2}} \sum_{\bom^\hbar\in\bOm_m^\hbar}\theta_{\bom^\hbar}\eP_{>Q}\zeta(\whB_{1/(2\hbar)}).
\]
Hence
\[
\bE\Bigl[\,\Bigl| \eP_{>Q}\zeta(1/\hbar)\,\Bigr|^2\,\Bigr]=\frac{1}{2^m}\sum_{\bom^\hbar,\bnu\hbar\in\bOm_m^\hbar}\bE\Bigl[\theta_{\bom^\hbar}\eP_{>Q}\zeta(\whB_{1/(2\hbar)})\cdot \theta_{\bom^\hbar}\eP_{>Q}\zeta(\whB_{1/(2\hbar)})\,\Bigr]
\]
\[
\leq \frac{1}{2^m} \sum_{\bom^\hbar,\bnu\hbar\in\bOm_m^\hbar}\bE\Bigl[\,\Bigr|\eP_{>Q}\zeta(\whB_{1/(2\hbar)})\Bigl|^2\,\Bigr]=2^m  \bE\Bigl[\,\Bigr|\eP_{>Q}\zeta(\whB_{1/(2\hbar)})\Bigl|^2\,\Bigr].
\]
Using (\ref{supNa})   we deduce
\[
\lim_{Q\to\infty} \left(\,\sup_\hbar \bE\Bigl[\,\bigr|\eP_{>Q}\zeta^\hbar(\whB_{1/(2\hbar)})\bigr|^2\,\Bigr]\,\right) =0
\]
showing that
\[
\lim_{Q\to\infty }\sup_{\hbar} \bE\Bigl[\,\Bigl| \eP_{>Q}\zeta(1/\hbar)\,\Bigr|^2\,\Bigr]=0.
\]
This shows that  the variables $\zeta^\hbar(1/\hbar)$ satisfy the conclusions of Lemma \ref{lemma: VQ}. 

This shows that the limit  $\lim_{h\searrow 0}\var\bigl[\zeta^\hbar(1/h)\,\bigr]$ exists and it is finite. Moreover (\ref{vq2a}) shows that this limit is $S_0$.
\qed

\appendix

\section{Proof  of Proposition \ref{prop: key}}
\label{s: key}
\setcounter{equation}{0}

We will follow the strategy  in the proof  of \cite[Prop. 1.1]{EL}. Some modifications are required since  the random functions $Y^\hbar$ are not isotropic for $\hbar>0$.

Denote by $p^\hbar_\bx(-)$ and $p^\hbar_{\bx,\by}(-,-)$ the probability densities of the  Gaussian vectors  $\nabla Y^\hbar(\bx)$ and respectively $(\,\nabla Y^\hbar(\bx),\nabla Y^\hbar(\by)\,)$. For simplicity we denote by $|S|$ the Lebesgue volume of a  Borel subset    $S\subset \bR^m$.

Due to the stationarity of $Y^\hbar$ it suffices to assume that the box $B$ is centered at $0$. The  Gaussian random function $Y^\hbar$ is stationary. Using the Kac-Rice formula \cite[Ch.11]{AT}  or \cite[Ch.6]{AzWs} we deduce that, $\forall \bv\in\bR^m$ we have
\begin{equation}\label{KR1}
\bE\bigl[\, \bsZ^\hbar(\bv, B)\,\bigr]= \bE\bigl[\, |\det \nabla^2Y^\hbar(0)\,|\,\bigr]p_0(\bv)\,|B|,
\end{equation}
\begin{multline}\label{KR2}
\bE\bigl[\, \bsZ^\hbar(\bv, B)\bigl(\,\bsZ^\hbar(\bv, B)-1\bigr) \,\bigr]\\
= \int_{2B}\bigl|B\cap (B-\by)\bigr| \;\underbrace{\bsE_{\by,\bv}\Bigl[\,|\det \nabla^2Y^\hbar(0) \det \nabla^2Y^\hbar(\by)|\;\Bigr]}_{g^\hbar(\bv,\by)}\;p_{0,\by}(\bv,\bv) d\by, 
\end{multline}
where, for typographical reasons, we denoted by   $\bE_{\by,\bv}$  the conditional expectation 
\[
\bE_{\by,\bv}[-]=\bE\bigl[\,-| C_\by(\bv)\,\bigr],\;\;C_\by(\bv):=\{\nabla Y^\hbar(0)=\nabla Y^\hbar(\by)=\bv\}.
\]
The two sides of the equality  (\ref{KR2})   are simultaneously finite or infinite.     Let us point out that the integrand on the right-hand side  of this equality could blow-up  at $\by=0$ because the  Gaussian vector  $(\,\nabla Y^\hbar(0),\nabla Y^\hbar(0)\,)$ is degenerate and therefore
\[
\lim_{\by\to 0}p_{0,\by}(\bv,\bv)=\infty.
\]
The  most  demanding    part  in the proof of Proposition \ref{prop: key}   is showing that the right-hand side of (\ref{KR2})   is finite. This boils down to understanding the singularity at the origin of   the integrand in (\ref{KR2}).      In \cite{Nifluct} we proved this fact in the case $\bv=0$.   To deal with the general  case we will use   a blend of the ideas in \cite{EL} and \cite{Nifluct}.

\smallskip

\noindent {\bf Step 1.}    We will show that there exist $\hbar_1>0$, $r_1>0$ and  $C_1>0$  such that for any $ \hbar\leq h_1$ we have
\begin{subequations}
\begin{equation}\label{prob_densa}
p^\hbar_{0,\by}(0,0)<\infty,\;\;\forall \by\neq 0,
\end{equation}
\begin{equation}\label{prob_densb}
0< p_{0,\by}(\bv,\bv)\leq C_1\vert \by\vert^{-m},\;\;\forall 0< \vert \by\vert<r_1,\;\;\forall\bv\in\bR^m.
\end{equation}
\end{subequations}
These two facts follow from \cite[Lemma 3.5]{Nifluct} and  the obvious inequality
\[
p_{0,\by}(\bv,\bv)\leq p_{0,\by}(0,0).
\]

\smallskip

\noindent{\bf Step 2.}  We will show that there exist $\hbar_2>0$, $r_2>0$ and  $C_2>0$  such that for any $ \hbar\leq h_1$ we have
\begin{equation}\label{integr}
|g^\hbar(\bv,\by)|\leq C_2\Vert \by\Vert^2,\;\;\forall \vert \by\vert \leq r_2,\;\;\bv\in\bR^m.
\end{equation}
We set
\[
f^\hbar(\by,\bv):= \bsE_{\by,\bv}\Bigl[\,|\det \nabla^2Y^\hbar(0)|^2\;\Bigr].
\]
From the Cauchy inequality and the stationarity of $Y^\hbar$  we deduce
\[
g^\hbar(\bv,\by)^2\leq   \bsE_{\by,\bv}\Bigl[\,|\det \nabla^2Y^\hbar(0)|^2\;\Bigr]\cdot  \bsE_{\by,\bv}\Bigl[\,|\det \nabla^2Y^\hbar(\by)|^2\;\Bigr]
\]
\[
=f^\hbar(\by,\bv)f^\hbar(-\by,\bv)=f^\hbar(\by,\bv)^2.
\]
We now invoke  Hadamard's inequality \cite[Thm. 7.8.1]{HJ}:   \textit{if $A:\bR^m\to\bR^m$ is an $m\times m$ symmetric  positive operator and $\{ \be_1,\dotsc, \be_m\}$ is an   orthonormal basis of $\bR^m$, then}
\[
\det A\leq  \prod_{j=1}^m (A\be_j,\be_j).
\]
Applying this inequality to $A=\nabla^2 Y^h(0))^2$ and a fixed  orthonormal basis  $\{\be_1,\dotsc,\be_m\}$ such that
\[
\be_1:=\vert \by\vert^{-1}\by,
\]
we deduce 
\[
|\det \nabla^2Y^\hbar(0)|^2\leq \vert \by\vert^{-2}\Vert\nabla^2 Y^y(0)\by\Vert^2  \Vert\nabla^2 Y^\hbar(0)\Vert^{2(m-1)}.
\]
Hence
\begin{multline}\label{det_had}
|g^\hbar(\bv,\by)|\leq f^\hbar(\bv,\by) \bE_{\by,\bv}\Bigl[\,|\det \nabla^2Y^\hbar(0)|^2\;\Bigr]^2\\
 \leq  \vert \by\vert^{-2}\bE_{\by,\bv}\Bigl[\,\Vert\nabla^2 Y^y(0)\by\Vert^4\;\Bigr]^{\frac{1}{2}} \bE_{\by,\bv}\Bigl[\, \Vert\nabla^2 Y^y(0)\Vert^{4(m-1)}\;\Bigr]^{\frac{1}{2}}
 \end{multline}
Now observe that
\[
\Vert\nabla^2 Y^y(0)\by\Vert^2= \vert\by\vert^2 \sum_{j=1}^m Y^\hbar_{1j}(0)^2,
\]
where, for any smooth function $F:\bR^m\to\bR$, we set
\[
F_{i,j,...k}:=\pa_{\be_i}\pa_{\be_j}\cdots\pa_{\be_k} F.
\]
Thus
\[
\vert\nabla^2 Y^y(0)\by\vert^4 \leq m \vert \by\vert^4 \sum_{j=1}^m Y^\hbar_{1j}(0)^4,
\]
\[
\bE_{\by,\bv}\Bigl[\,\vert\nabla^2 Y^y(0)\by\vert^4\;\Bigr]\leq  m \vert \by\vert^4 \sum_{j=1}^m\bE_{\by,\bv}\Bigl[\,  Y^\hbar_{1j}(0)^4\,\Bigr],
\]
\begin{equation}\label{det_had1}
g^\hbar(\bv,\by)\leq \sqrt{m} \left(\sum_{j=1}^m\bE_{\by,\bv}\Bigl[\,  Y^\hbar_{1j}(0)^4\,\Bigr]\right)^{\frac{1}{2}}\bE_{\by,\bv}\Bigl[\, \Vert\nabla^2 Y^\hbar(0)\Vert^{4(m-1)}\,\Bigr]^{\frac{1}{2}}.
\end{equation}
For each $j=1,\dotsc, m$ define the random function
\[
F_j:[0,1]\to\bR,\;\; F_j(t)= Y_j^\hbar(t\by).
\]
Then
\[
F_j'(t)= \vert \by\vert Y^\hbar_{1,j}(t\by) ,\;\;F_j''(t)=\vert \by\vert^2 Y^\hbar_{1,1,j}(t\by)
\]
Using the Taylor formula with integral remainder we deduce
\[
F_j(1)-F_j(0)= F'_j(0)+\int_0^1F_j''(t)(1-t) dt,
\]
i.e.,
\[
 Y^\hbar_{j}(\by)-Y^\hbar_{j}(0)= Y^\hbar_{1j}(0) +\vert \by\vert \int_0^1Y^\hbar_{11j}(t\by)(t\by) dt.
\]
Hence
\[
Y^\hbar_{1,j}(0)= Y^\hbar_{j}(\by)-Y^\hbar_{j}(0)-\vert \by\vert \int_0^1Y^\hbar_{1,1,j}(t\by)(t\by) dt.
\]
Setting $v_j:=(\bv,\be_j)$ and observing that under the condition $C_\by(\bv)$ we have 
\[
Y^\hbar_j(0)=Y^\hbar_j(\by)=v_j, \;\; \forall j=1,\dotsc, m,
\]
 we deduce
\[
\bE_{\by,\bv}\Bigl[\,  Y^\hbar_{1,j}(0)^4\,\Bigr]=\bE_{\by,\bv}\left[\left( Y^\hbar_{j}(\by)-Y^\hbar_{j}(0)-\vert \by\vert \int_0^1Y^\hbar_{1,1,j}(t\by)(1-t) dt\right)^4\,\Bigr|\; C_{\by}(\bv)\right]
\]
\[
=\vert \by\vert^4\bE_{\by,\bv}\left[\left( \int_0^1Y^\hbar_{1,1,j}(t\by)(1-t) dt\right)^4\,\right] \leq \vert \by\vert^4\bE_{\by,\bv}\left[ \int_0^1\bigl|Y^\hbar_{1,1,j}(t\by)\bigr|^4 dt\,\right]
\]
\[
=\vert \by\vert^4\int_0^1\bE_{\by,\bv}\Bigl[\,\bigl|Y^\hbar_{1,1,j}(t\by)\bigr|^4\,\Bigr] dt.
\]
We conclude that
\begin{multline}\label{det_had2}
g^\hbar(\bv,\by)\leq \sqrt{m} \vert \by\vert^2\left(\int_0^1\sum_{j=1}^m \bE_{\by,\bv}\Bigl[\,\bigl|Y^\hbar_{1,1,j}(t\by)\bigr|^4\,\Bigr] dt\right)^{\frac{1}{2}}\bE_{\by,\bv}\Bigl[\, \Vert\nabla^2 Y^\hbar(0)\Vert^{4(m-1)}\,\Bigr]^{\frac{1}{2}}.
\end{multline}
Step 2 will be completed once we prove the following result.

\begin{lemma}\label{lemma: gauss_est}  There exist $\hbar_2>0$, $r_2>0$ and  $C_2>0$  such that for any $ \hbar\leq h_1$  and any $\bv\in\bR^m$ we have
\begin{subequations}
\begin{equation}\label{det_had3a}
\bE_{\by,\bv}\Bigl[\, \Vert\nabla^2 Y^\hbar(0)\Vert^{4(m-1)}\,\Bigr]\leq C_2(1+\vert\bv\vert)^{4(m-1)},\;\;\forall \hbar<\hbar_2,\;\;\vert \by\vert<r_2,
\end{equation}
\begin{equation}\label{det_had3b}
\bE_{\by,\bv}\Bigl[\,\bigl|Y^\hbar_{1,1,j}(t\by)\bigr|^4\,\Bigr] \leq C_2(1+\vert\bv\vert)^4,\;\;\forall j,\;\; \hbar<\hbar_2,\;\;\vert \by\vert<r_2,\;\;t\in[0,1].
\end{equation}
\end{subequations}
\end{lemma}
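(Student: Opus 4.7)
The plan is to reduce Lemma~\ref{lemma: gauss_est} to a standard fact about Gaussian conditioning, after a reparametrization that removes the degeneracy of the conditioning event $C_\by(\bv)=\{\nabla Y^\hbar(0)=\nabla Y^\hbar(\by)=\bv\}$ as $\by\to 0$. The key observation is that $\nabla Y^\hbar(0)$ and $\nabla Y^\hbar(\by)$ become collinear in the limit, so conditioning on both is singular, but conditioning on
\[
U_0:=\nabla Y^\hbar(0),\qquad \widetilde U_\by:=\tfrac{1}{|\by|}\bigl(\,\nabla Y^\hbar(\by)-\nabla Y^\hbar(0)\,\bigr)
\]
is nondegenerate and continuous in $(\hbar,\by)$. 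Indeed, setting $\bn:=\by/|\by|$, the Taylor expansion shows
\[
\widetilde U_\by \;=\; \partial_{\bn}\nabla Y^\hbar(0)+O(|\by|)\quad\text{in }L^2(\Omega),
\]
so the covariance matrix $\Sigma^\hbar_\by$ of $(U_0,\widetilde U_\by)\in\bR^{2m}$ converges to the covariance matrix of $(\nabla Y^0(0),\partial_{\bn}\nabla Y^0(0))$, which is block-diagonal and nondegenerate (using $V(0)$ radially symmetric, as in Remark~\ref{rem: lam}, and standard stationarity identities). Thus there exist $\hbar_2,r_2>0$ such that $\Sigma^\hbar_\by$ and $(\Sigma^\hbar_\by)^{-1}$ are bounded uniformly for $\hbar\in[0,\hbar_2]$ and $|\by|\le r_2$.

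The conditioning event $C_\by(\bv)$ is exactly the event $\{U_0=\bv,\ \widetilde U_\by=0\}$. By the Gaussian regression formula, for any mean-zero Gaussian vector $G$ jointly distributed with $(U_0,\widetilde U_\by)$ inside $\eX$, the conditional law of $G$ given $C_\by(\bv)$ is Gaussian with mean
\[
\mu^\hbar_{\by,\bv}(G)=\bsM^\hbar_\by(G)\begin{pmatrix}\bv\\0\end{pmatrix},\qquad \bsM^\hbar_\by(G):=\Cov(G,(U_0,\widetilde U_\by))\,(\Sigma^\hbar_\by)^{-1},
\]
and with conditional covariance $\Sigma^\hbar_\by(G)$ that is \emph{independent of $\bv$}. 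Apply this to (i) $G=\nabla^2 Y^\hbar(0)$, viewed as a Gaussian vector in $\bR^{\nu(m)}$, and to (ii) $G=Y^\hbar_{1,1,j}(t\by)$ for $t\in[0,1]$. In both cases, the covariance of $G$ with $(U_0,\widetilde U_\by)$ is expressible in terms of derivatives of $V^\hbar$ up to order $\le 4$ evaluated at points in $\whB_{r_2}$; by (\ref{eq: vve_appr}) and smoothness of $V$, the matrices $\bsM^\hbar_\by(G)$ and $\Sigma^\hbar_\by(G)$ are bounded uniformly in $\hbar\in[0,\hbar_2]$, $|\by|\le r_2$, and $t\in[0,1]$.

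Consequently, under the conditional law, $G$ is Gaussian with mean of norm $\le K|\bv|$ and covariance of norm $\le K$, for some constant $K$ independent of $(\hbar,\by,t,\bv)$. For any $p\ge 1$, a Gaussian vector $G'\sim\eN(\mu,\Sigma)$ in $\bR^d$ satisfies
\[
\bE\bigl[\|G'\|^p\bigr]\;\le\;C_{p,d}\bigl(\|\mu\|^p+\|\Sigma\|^{p/2}\bigr).
\]
Applying this with $p=4(m-1)$ for (\ref{det_had3a}) and $p=4$ for (\ref{det_had3b}) yields both bounds in the form $C_2(1+|\bv|)^{p}$, completing the proof.

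The main obstacle, and the only place care is needed, is Step~1: verifying that $\Sigma^\hbar_\by$ is uniformly invertible for $(\hbar,\by)$ in a neighborhood of $(0,0)$. This reduces to checking nondegeneracy of the block-diagonal limit covariance of $(\nabla Y^0(0),\partial_{\bn}\nabla Y^0(0))$, which follows from the explicit formulas in (\ref{sdh}) together with $I_{m+3}(w)>0$, and to a uniform compactness/continuity argument in $\hbar$ based on (\ref{eq: vve_appr}).
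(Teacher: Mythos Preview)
Your argument is correct and takes a genuinely different route from the paper's. The paper works directly with the degenerate conditioning vector $(\nabla Y^\hbar(0),\nabla Y^\hbar(\by))$: it writes the inverse of its covariance matrix in block form $\bigl(\begin{smallmatrix}C_\hbar & -D_\hbar\\ -D_\hbar & C_\hbar\end{smallmatrix}\bigr)$, invokes the asymptotic $t^2 C_\hbar(t,\by)\to (K^\hbar)^{-1}$ established in \cite{Nifluct}, and then tracks cancellations to show that $C_\hbar-D_\hbar$ stays bounded as $\by\to 0$. The mean and variance of each conditioned variable are then controlled by combining this with the observation that the relevant cross-correlations $\bXi^\hbar$, $\Phi^\hbar$ are $O(|\by|)$. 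Your reparametrization $(U_0,\widetilde U_\by)$ is exactly the change of variables that diagonalizes this singular limit: the boundedness of $C_\hbar-D_\hbar$ and the $O(|\by|)$ behavior of the cross-correlations are both absorbed into the single statement that $\Sigma^\hbar_\by$ and $\Cov(G,(U_0,\widetilde U_\by))$ are uniformly bounded. This buys a cleaner bookkeeping and avoids the block-inverse asymptotics; the paper's approach, on the other hand, yields slightly sharper intermediate information (e.g.\ the conditional mean of $Y^\hbar_{i_0j_0}(0)$ is actually $O(|\by|\cdot|\bv|)$, not just $O(|\bv|)$), though this extra precision is not needed for the lemma as stated.

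Two minor points of care. First, in case (ii) your claim that the entries of $\Cov(G,(U_0,\widetilde U_\by))$ involve only derivatives of $V^\hbar$ of order $\le 4$ undershoots: the covariance of $Y^\hbar_{1,1,j}(t\by)$ with $\widetilde U_\by$ equals $|\by|^{-1}\bigl(V^\hbar_{11jk}(t\by)-V^\hbar_{11jk}((1-t)\by)\bigr)$, whose uniform bound requires control of a \emph{fifth} derivative of $V^\hbar$, and the unconditional variance of $Y^\hbar_{1,1,j}(t\by)$ is a sixth derivative of $V^\hbar$ at $0$. This is harmless because (\ref{eq: vve_appr}) gives $C^k$ control for every $k$, but it should be said. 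Second, the limit of $\Sigma^\hbar_\by$ as $\by\to 0$ depends on the direction $\bn=\by/|\by|$, so strictly speaking you need uniform invertibility over $\bn\in S^{m-1}$; by the radial symmetry of $V$ this is a single check (the block $(V_{\bn\bn ij}(0))_{ij}$ has the same determinant for every $\bn$), and it is precisely the invertibility of the matrix $K^0$ that the paper imports from \cite[Lemma~3.6]{Nifluct}.
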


\begin{proof}  The random matrix $\nabla^2 Y^\hbar$, conditioned by $C_\by(\bv)$, is \emph{Gaussian}. The same is true  of  $Y^\hbar_{1,1,j}(t\by)$ so it suffices to show  that there exist $\hbar_2>0$, $r_2>0$ and  $C_2>0$  such that for any $ \hbar\leq h_1$  and any $\bv\in\bR^m$ we have
\begin{subequations}
\begin{equation}\label{det_had4a}
\bE_{\by,\bv}\Bigl[\, \bigl|Y_{i,j}^\hbar(0)\bigr|^2\,\Bigr]\leq C_2(1+\vert\bv\vert)^{2},\;\;\forall i,j, \; \forall \hbar<\hbar_2,\;\;\vert \by\vert<r_2,
\end{equation}
\begin{equation}\label{det_had4b}
\bE_{\by,\bv}\Bigl[\,\bigl|Y^\hbar_{1,1,j}(t\by)\bigr|^2\,\Bigr] \leq C_2(1+\vert\bv\vert)^2,\;\;\forall j,\;\forall \hbar<\hbar_2,\;\vert \by\vert<r_2,\;\;t\in[0,1].
\end{equation}
\end{subequations}
As in \cite{Nifluct} we  introduce the index sets
\[
 \bsJ=\{\pm1,\pm 2,\dotsc, m\},\;\;J_\pm =\bigl\{ j\in\bsJ;\;\;\pm j>0\,\bigr\}.
 \]
 We consider the  $\bR^m\oplus \bR^m$ valued Random Gaussian vector  $G^\hbar(t)=G_-^\hbar\oplus G_+^\hbar$, $t\in[0,1]$, where
\[
G_-^\hbar:=\sum_{i=1}^m G^\hbar_{-i} \be_i=\nabla Y^\hbar(0),\;\;G_+^\hbar:=\sum_{j=1}^m G^\hbar_j\be_j=\nabla Y^\hbar(t\by).
\]
The covariance form   of this vector is the $2m\times 2m$ symmetric matrix 
\[
S_\hbar=S_\hbar(t)= \left[
\begin{array}{cc}
S^{-,-}_\hbar & S^{-,+}_\hbar\\
&\\
S^{+,-}_\hbar & S^{+,+}_\hbar
\end{array}
\right]=\left[
\begin{array}{cc}
A_\hbar & B_\hbar\\
B_\hbar & A_\hbar
\end{array}
\right],
\]
\[
A_\hbar=-\nabla^2 V^\hbar(0),\;\;B_\hbar=B_\hbar(t,\by)=-\nabla^2 V^\hbar(t\by).
\]
From (\ref{prob_densb}) we deduce that $S_\hbar$ is invertible if $\hbar<\hbar_1$ and $\vert \by\vert\leq r_1$. Its inverse has the block form
\[
\left[
\begin{array}{cc}
C_\hbar(t) &  -D_\hbar(t)\\
 -D_\hbar(t) & C_\hbar(t)
\end{array}
\right]=\left[
\begin{array}{cc}
C_\hbar(t) &  -A_\hbar^{-1}B_\hbar(t) C_\hbar(t)\\
 -A_\hbar^{-1}B_\hbar(t) C_\hbar(t) & C_\hbar(t)
\end{array}
\right],
\]
where,
\[
 C_\hbar(t)=C_\hbar(t,\by):= \bigl(\, A_\hbar- B_\hbar(t,\by) A_\hbar^{-1}B_\hbar(t,\by)\,\bigr)^{-1}.
\]
In \cite[Lemma 3.6]{Nifluct} we have shown  that there exists $\hbar_2\in (0,\hbar_1)$ such that the  $m\times m$ matrix
\[
K^\hbar:=(K^\hbar_{ij})_{1\leq i,j\leq m},\;\; K^\hbar_{ij}:= V^\hbar_{1,1,i,j}(0)
\]
is invertible for $\hbar\in[0, \hbar_2]$ and
\begin{equation}\label{degen1}
\lim_{t\to 0}  t^2  C_\hbar(t,\by) =(K^\hbar)^{-1},\;\;\mbox{\emph{uniformly} in $\hbar\in [0,\hbar_2]$ and $\vert \by\vert\leq r_1$
}.
\end{equation}
Next, observe that
\[
C_\hbar(t,\by)-D_\hbar(t,\by)= A^{-1}_\hbar\bigl(\, A_\hbar-B_\hbar(t,\by)\,\bigr)C_\hbar(t,\by)
\]
\[
=A_\hbar^{-1}\frac{1}{t^2}\bigl(\, A_\hbar-B_\hbar(t,\by)\,\bigr) t^2 C_\hbar(t,\by),
\]
and 
\[
\frac{1}{t^2}\bigl(\, A_\hbar-B_\hbar(t,\by)\,\bigr)=\frac{1}{t^2}\Bigl(\, \nabla^2 V^\hbar(t\by)-\nabla^2 V^\hbar(0)\,\Bigr)
\]
Since the function
\[
\bx\mapsto \nabla^2 V^\hbar(\bx)
\]
is even  and $V^\hbar\to V^0$ in the  $C^\infty$-topology  as $\hbar\to 0$ we deduce that the limit
\[
\lim_{t\to 0} \frac{1}{t^2}\Bigl(\, \nabla^2 V^\hbar(t\by)-\nabla^2 V^\hbar(0)\,\Bigr)
\]
exists, it is finite and it is \emph{uniform} in $\hbar\in[0,\hbar_2]$ and $\vert \by\vert\leq r_1$.  Using (\ref{degen1}) we conclude that there exists  a constant $c_1>0$ such that
\begin{equation}\label{degen2}
\Vert C_\hbar(t,\by)-D_\hbar(t,\by)\Vert \leq c_1,\;\;\forall t\in [0,1],\;\;\hbar\in[0,\hbar_2],\;\;\vert \by\vert\leq r_1.
\end{equation}
We can now prove  (\ref{det_had4a}) and (\ref{det_had4b}).

\smallskip

\noindent {\bf Proof of (\ref{det_had4a}).}  Fix $i_0,j_0\in \{1,\dotsc, m\}$.  The random variable $Y^\hbar_{i_0,j_0}(0)$,  conditioned by $C_\by(\bv)$, is  a normal random variable $\bar{Y}^\hbar_{i_0,j_0}$,  and its mean and variance are determined by the regression formula, \cite[Prop. 1.2]{AzWs}. To apply this formula we need to   compute the correlations between $Y^\hbar_{i_0,j_0}(0)$ and $G^\hbar$. These are given by the expectations
\[
\bXi_j^\hbar=\bXi^\hbar_j(\by)=\bE\bigl[\, Y^\hbar_{i_0,j_0}(0) G_j(1)\,\bigr] ,\;\;j\in\bsJ.
\]
We have
\[
\bXi_j^\hbar(\by)=\begin{cases}V^\hbar_{i_0,j_0, |j|}(0), & j\in\bsJ_-\\
V^\hbar_{i_0,j_0, j}(\by), &j\in\bsJ_+.
\end{cases}
\]
We regard the collection  $(\bXi_j^\hbar(\by))_{j\in\bsJ}$ as a linear map
\[
\bXi^\hbar(\by):\bR^m\oplus \bR^m\to\bR,\;\;  \bXi^\hbar\bigl(\,(z_j)_{j\in\bsJ}\,\bigr)=\sum_{j\in\bsJ} \bXi^\hbar_j(\by) z_j.
\]
In particular, we think of $\bXi^\hbar$ as a \emph{row} vector so its transpose $(\bXi^\hbar)^\top$ is a \emph{column} vector.

Observe that since  the function $V^\hbar$ is even, the third order derivative $V^\hbar_{ijk}$ are odd functions. Thus 
\[
V^\hbar_{i_0,j_0,j}(0)=0
\]
and there exists $r_2\in(0,r_1)$ and $c_2>0$ such that
\[
|V^\hbar_{i,j,k}(\by)|\leq c_2\vert \by\vert,\;\;\forall i,j,k,\;\forall \hbar\in [0,\hbar_2],\;\vert \by\vert\leq r_2.
\]
Hence
\begin{equation}\label{degen3}
\Vert \bXi^\hbar(\by)\Vert\leq c_2\vert \by\vert,\;\forall \hbar\in [0,\hbar_2],\;\vert \by\vert\leq r_2.
\end{equation}
Denote by $\hat{\bv}\in\bR^m\oplus \bR^m$ the vector $\bv\oplus\bv$.

According to the regression formula, the mean  of the conditioned random variable $\bar{Y}^\hbar_{i_0,j_0}$ is 
\[
\bE\bigl[\, \bar{Y}^\hbar_{i_0,j_0}\,\bigr]= -\bXi^\hbar(\by)\bigl(\, S_\hbar^{-1}\hat{\bv}\,\bigr)= \bXi^\hbar(\by)\bigl(\,  (C_\hbar-D_\hbar)\bv \oplus  (C_\hbar-D_\hbar)\bv\,\bigr).
\]
Using (\ref{degen2}) and (\ref{degen3}) we deduce that there exists $c_3>0$ such that
\begin{equation}\label{degen4}
\Bigl|\bE_{\by,\bv}\bigl[\, Y^\hbar_{i_0,j_0}\,\bigr]\Bigr|\leq c_3\vert \by\vert  \vert\bv\vert,\;\;\forall \hbar\in [0,\hbar_2],\;\;\vert \by\vert\leq r_2,\;\bv\in\bR^m.
\end{equation}
According to the regression formula, the variance  of the conditioned random variable $\bar{Y}^\hbar_{i_0,j_0}$ is 
\[
\var\bigl[\, \bar{Y}^\hbar_{i_0,j_0}\,\bigr]=\var\bigl[\,Y^\hbar_{i_0,j_0}\,\bigr]-\bXi^\hbar(\by) S_\hbar^{-1}(\bXi^\hbar(\by))^\top
\]
\[
= V^\hbar_{i_0,j_0,i_0,j_0}(0) -\bXi^\hbar(\by) S_\hbar^{-1}(\bXi^\hbar(\by))^\top
\]
Using (\ref{eq: vve_appr}), (\ref{degen2}) and (\ref{degen3}) we deduce that there exists $c_4>0$ such that
\begin{equation}\label{degen5}
\var_{\by,\bv}\bigl[\, Y^\hbar_{i_0,j_0}\,\bigr]\leq c_4  \vert\bv\vert,\;\;\forall \hbar\in [0,\hbar_2],\;\;\vert \by\vert\leq r_2,\;\bv\in\bR^m.
\end{equation}
The inequality (\ref{det_had4a}) now follows from (\ref{degen4}) and (\ref{degen5}).

\smallskip

\noindent {\bf Proof of (\ref{det_had4b}).} Fix $j_0\in\{1,2,\dotsc, m\}$ The random variable $Y^\hbar_{1,1,j_0}(t\by)$, conditioned by $C_{\by}(\bv)$ is a normal random variable $\bar{Y}^\hbar_{1,1,j_0}$.  To describe its mean and its variance we need to compute the correlations
\[
\bOm^\hbar_j(t,\by):=\bE\bigl[ \, Y^\hbar_{1,1,j_0}(t\by) G^\hbar_j\,\bigr]=\begin{cases}-V^\hbar_{1,1,j_0, |j|}(t\by), & j\in\bsJ_-\\
-V^\hbar_{1,1,j_0, j}\bigl( (1-t)\by\,\bigr), &j\in\bsJ_+.
\end{cases}
\]
Again,  we think of the collection  $(\bOm^\hbar_j(t,\by))_{j\in\bsJ}$ as defining a linear map
\[
\bOm^\hbar: \bR^m\oplus \bR^m\to\bR.
\]
The row vector $\bOm^\hbar$ splits as a direct sum of row vectors
\[
\bOm^\hbar=\bOm^\hbar_-\oplus \bOm^\hbar_+,\;\;\bOm^\hbar_\pm= \bigl(\, \bOm^\hbar_j\,\bigr)_{j\in \bsJ_\pm}.
\]
The mean of the random variable $\bar{Y}^\hbar_{1,1,j_0}$ is
\[
\bE\bigl[\, \bar{Y}^\hbar_{1,1,j_0}\,\bigr]=-\bOm^\hbar\bigl(\,  (C_\hbar-D_\hbar)\bv \oplus  (C_\hbar-D_\hbar)\bv\,\bigr).
\]
We conclude as befgore that
\begin{equation}\label{degen6}
\Bigl|\, \bE\bigl[\, {Y}^\hbar_{1,1,j_0}(t\by)\,\bigr]\,\Bigr|= O\bigl(\vert\bv\vert\cdot\vert \by\vert\bigl),\;\;\vert \by\vert\leq r_2,\;\;\bv\in\bR^m,\;\;t\in[0,1],
\end{equation}
 where  \emph{the constant implied  by the $O$-symbol is independent of $\hbar$ and $t$.  This  convention  will stay in place for the remainder of this proof.}
 
 Next, we have
 \[
 \var[\, \bar{Y}^\hbar_{1,1,j_0}\,\bigr]= \var[\, {Y}^\hbar_{1,1,j_0}(t\by)\,\bigr]-\bOm^\hbar S_\hbar^{-1} (\bOm^\hbar)^\top
 \]
 \[
 =V^\hbar_{1,1,j_0,1,1,j_0}(t\by)-\bigl[\,\bOm_-^\hbar\;\;\bOm^\hbar_+\,\bigr] \cdot\left[
\begin{array}{cc}
C_\hbar &  -D_\hbar\\
 -D_\hbar & C_\hbar
\end{array}
\right]\cdot\left[
\begin{array}{c}
(\bOm^\hbar_-)^\top\\
(\bOm^\hbar_+)^\top
\end{array}
\right]
\]
\[
=V^\hbar_{1,1,1,1,j_0,j_0}(t\by)-\bigl[\,\bOm_-^\hbar\;\;\bOm^\hbar_+\,\bigr] \cdot\left[
\begin{array}{c}
C_\hbar(\bOm^\hbar_-)^\top-D_\hbar(\bOm^\hbar_+)^\top\\
-D_\hbar(\bOm^\hbar_-)^\top+C_\hbar(\bOm^\hbar_+)^\top
\end{array}
\right]
\]
\[
= V^\hbar_{1,1,1,1,j_0,j_0}(t\by)-\Bigl(  \bOm_-^\hbar C_\hbar(\bOm^\hbar_-)^\top-\bOm_-^\hbar D_\hbar(\bOm^\hbar_+)^\top -\bOm^\hbar_+ D_\hbar(\bOm^\hbar_-)^\top+ \bOm^\hbar_+ C_\hbar(\bOm^\hbar_+)^\top\,\bigr)
\]
($D_\hbar=C_\hbar+O(1)$)
\[
=V^\hbar_{1,1,1,1,j_0,j_0}(t\by)-\Bigl(  \bOm_-^\hbar C_\hbar(\bOm^\hbar_-)^\top-\bOm_-^\hbar C_\hbar(\bOm^\hbar_+)^\top -\bOm^\hbar_+ C_\hbar(\bOm^\hbar_-)^\top+ \bOm^\hbar_+ C_\hbar(\bOm^\hbar_+)^\top\,\bigr) +O(1)
\]
\[
= V^\hbar_{1,1,1,1,j_0,j_0}(t\by)-\Bigl(\,  (\bOm_-^\hbar -\bOm^\hbar_+\,\bigr)C_\hbar(\bOm^\hbar_-)^\top -\bigl(\,\bOm_-^\hbar - \bOm^\hbar_+\,\bigr) C_\hbar(\bOm^\hbar_+)^\top\,\bigr) +O(1)
\]
\[
=  V^\hbar_{1,1,1,1,j_0,j_0}(t\by)- (\bOm_-^\hbar -\bOm^\hbar_+\,\bigr)C_\hbar (\bOm_-^\hbar -\bOm^\hbar_+\,\bigr)^\top+O(1).
\]
Now observe that $\Phi^\hbar=\bOm_-^\hbar -\bOm^\hbar_+$ is the vector in $\bR^m$   with components
\[
\Phi^\hbar_j(t,\by)=  V^\hbar_{1,1,j_0, j}\bigl( (1-t)\by\,\bigr)- V^\hbar_{1,1,j_0, j}\bigl( (t \by\,\bigr),
\]
that satisfy
\[
\bigl|\, \Phi^\hbar_j(t,\by)\,\bigr| =O(\vert \by\vert),\;\;\forall j.
\]
From (\ref{degen1}) we deduce
\[
\Vert C_\hbar(t,\by)\Vert= O\bigl(\vert \by\vert^{-2}\,\bigr)
\]
so that 
\[
\bigl|\, (\bOm_-^\hbar -\bOm^\hbar_+\,\bigr)C_\hbar (\bOm_-^\hbar -\bOm^\hbar_+\,\bigr)^\top\,\bigr|= O(1).
\]
This completes the proof of (\ref{det_had4b}) and  thus of Lemma \ref{lemma: gauss_est} and of statement (i) in Proposition \ref{prop: key}.
\end{proof}

\smallskip

\noindent {\bf Step 3.} The map 
\[
\bv\mapsto \bE\bigl[\, \bsZ^\hbar(\bv, B)\bigl(\,\bsZ^\hbar(\bv, B)-1\bigr) \,\bigr]
\]
is continuous.  This  follows by using the  argument in \emph{Point 2} in the proof of \cite[Prop. 1.1]{EL}.  Combined with (\ref{KR1}) will prove the statement (ii) in Proposition \ref{prop: key}.  

\smallskip

\noindent {\bf Step  4.}  Prove the  statement (iii) in Proposition \ref{prop: key}.   This  follows by using the  argument in \emph{Point 3} in the proof of \cite[Prop. 1.1]{EL}. 

\smallskip

\noindent {\bf Step 5.}    Using the  results  in {\bf Step 1} and {\bf Step 2}  and the dominated convergence theorem we obtain the statement (iv). \qed

\section*{Notation}

\begin{itemize}

\item We set 
\[
\bN:=\bigl\{n\in\bZ;\;\;n>0\,\bigr\},\;\;\bN_0:=\bigl\{n\in\bZ;\;\;n\geq 0\,\bigr\}.
\]
%\item If $\bsV$ is a real vector space, we denote by $\eL(\bsV)$ the space of  linear operators $\bsV\to\bsV$.

\item $\bone_A$ denotes the characteristic function of a subset $A$ of a set $S$,
\[
\bone_A: S\to\{0,1\},\;\;\bone_A(a)=\begin{cases}
1, & a\in A,\\
0, &a\in S\setminus A.
\end{cases}
\]
%\item For $k\in\bN_0\cup\{\infty\}$ and $n\in\bN$ we denote by  $C^k_b(\bR^n)$ the space of  $C^k$-functions with  bounded derivatives   of order $\leq k$ .

\item For a topological space $X$ we denote by $\eB(X)$ the $\si$-algebra of Borel subsets of $X$.

\item We will write $N\sim \eN(m,v)$ to indicate that  $N$ is a  normal random variable with mean $m$ and variance $v$.

\item For $\bx,\by\in\bR^m$ we set
\[
|\bx|_\infty:=\max_{1\leq j\leq m}|x_j|,\;\;(\bx,\by)=\sum_{j=1}^m x_jy_j,\;\;\vert \bx\vert:=\sqrt{(\bx,\bx)}.
\]
\item We denote by $\bA^m$ the \emph{affine lattice}
\[
\bA^m=\left(\frac{1}{2}+\bZ\right)^m.
\]
\item   For any  matrix $A$, we denote by $A^\top$ its transpose, and by $\Vert A\Vert$ its norm
\[
\Vert A\Vert=\sup_{|\bx|=1}|A\bx|.
\]

\item  We denote by $\one_m$ the  identity operator $\bR^m\to\bR^m$.

\item For any Borel subset $B\subset \bR^m$ we denote by $|B|$ its  Lebesgue measure.

\item We denote by $\bgamma$ the  canonical Gaussian measure  on $\bR$
\[
\bgamma(dx)=\frac{1}{\sqrt{2\pi}} e^{-\frac{x^2}{2}} dx,
\]
and by $\bGamma$   the  canonical Gaussian measure  on $\bR^m$
\[
\bGamma(d\bx)=(2\pi)^{-\frac{m}{2}} e^{-\frac{\vert x\vert^2}{2}} d\bx,
\]

\item  If $C$ is a symmetric, nonnegative  definite $m\times m$ matrix,    we write $N\sim\eN(0,C)$ to indicate that $N$  is an $\bR^m$-valued  Gaussian random vector with mean $0$ and covariance form $C$.

\item  If $f:\bR^m\to \bR$ is a twice differentiable function,  and $\bx\in\bR^m$, then we denote by  $\nabla^2 f(\bx)$  its  \emph{Hessian}, viewed as a symmetric operator $\bR^m\to\bR^m$.
\end{itemize}

\end{document}